\newtheorem{Notation}{Notation}
\newtheorem{Convention}{Convention}
\newtheorem{Def-Prop}{Definition-Proposition}
\newtheorem{Proof-Expl}{Proof-Explanation}
\newtheorem{theorem}{Theorem}[section]
\newtheorem{definition}[theorem]{Definition}
\newtheorem{lemma}[theorem]{Lemma}
\newtheorem{proposition}[theorem]{Proposition}
\newtheorem{Question-Conjecture}[theorem]{Question-Conjecture}
\newtheorem{corollary}[theorem]{Corollary}
\newtheorem{examplecore}[theorem]{Example}}
\newtheorem{remark}[theorem]{Remark}
\newcommand{\Id}{\ensuremath{\operatorname{\text{Id}}}}
\newcommand{\Spec}{\ensuremath{\operatorname{Spec}}}
\newcommand{\Mod}{\ensuremath{\operatorname{Mod}}}
\newcommand{\Hom}{\ensuremath{\operatorname{Hom}}}
\newenvironment{proof}{\noindent\textsc{Proof:}}{\hspace*{\fill}
$\blacksquare$\par\vspace{.1cm}}
\newcommand{\mylabel}[1]{\label{#1}\ifthenelse{\boolean{finalversion}}{
  }{\marginpar{\tiny #1}}}  
\begin{document}
\title{A  note on linear algebraic differential equations}
\maketitle
\author{Stefan G\"unther}
\begin{abstract}
The main result in this note is that a very general linear homogenous partial differential operator with algebraic (polynomial) coefficents has no nonzero  algebraic solutions. This result is in particular true for  systems of ordinary linear homogenous differential operators.
\end{abstract}

\tableofcontents
\section{Notation and Conventions}
\begin{Convention} By $\mathbb N$\, we denote the natural numbers, by $\mathbb N_0$\, the set of nonnegative integers.
\end{Convention}
 In this note,all rings and schemes are assumed to be noetherian and all morphisms $q: X\longrightarrow S$\, are assumed to be of finite type.\\

We use multi index notation: if $x_1,...,x_n$\, is a set of variables, we denote  $\underline{x}^{\underline{m}}:= x_1^{m_1}\cdot x_2^{m_2}\ldots \cdot x_n^{m_n}$\, where  $\underline{m}:=(m_1,m_2,\ldots ,m_n)$\, is a multi-index of lenght $n$. By $|\underline{m}|$\, we denote the number $m_1+...+m_n$\,.
The partial derivatives of a function $f(x_1,...,x_m)$\,  in the variables $x_i$\, we denote by 
$\partial^{\mid m\mid}/\partial \underline{x}^{\underline{m}}(f(x_1,...,x_m))$\,.\\
\begin{Notation}If $k\longrightarrow A$\, is a homomorphism of commutative rings and $M$ is an $A$-module, the $N^{th}$ jet-module of $M$ relative to $k$ is denoted by $\mathcal J^N(M/k)$\,.\\
If $q: X\longrightarrow S$\, is a morphism of finite type, $\mathcal F$\, is a coherent sheaf on $X$, the $N^{th}$\, jet module of $\mathcal F$\, relative to $S$ is denoted by $\mathcal J^N(\mathcal F/S)$\,.
\end{Notation} 
\section{Introduction}
Linear partial differential equations are a huge research area for quite a long time. In this note, we want to prove a  modest theorem on systems of linear partial differential equations with polynomial, or, more generally algebraic coefficients.\\ 
Recall that if $k$ is a field and $\mathbb A^n_k$\, is affine $n$-space over $k$, then a homogenous partial linear differential operator of order $N$, 
$$D: \bigoplus_{i=1}^m k[x_1,...,x_n]e_i\longrightarrow \bigoplus_{i=1}^mk[x_1,...,x_n]e_i$$ is given by an $m\times m$-matrix $(D_{ij})$\, where 
$$D_{ij}:=\sum_{I, |I|\leq N}a_{I, i,j}(\underline{x})\partial^{|I|}/\partial \underline{x}^I, 1\leq i,j\leq N,$$ and $a_{I, i,j}(x_1,...,x_n)$\, are polynomials in the variables $x_i$. \\  By standard differential calculus, this corresponds to a $k[x_1,...,x_n]$-linear map 
$$\widetilde{D}: (k[x_1,...,x_n][d^1x_1,...,d^1x_n]/(d^1x_1,...,d^1x_n)^{N+1})^{\oplus m}\longrightarrow k[x_1,...,x_n]^{\oplus m}$$ under the natural correspondence $\widetilde{D}\mapsto \widetilde{D}\circ (d^N_{\mathbb A^n_k/k})^{\oplus m}$\, where $d^N_{\mathbb A^N_k/k}$\, is the $N$-truncated Taylor series expansion,
$$k[x_1,...,x_n]\longrightarrow k[x_1,...,x_n][d^1x_1,...,d^1x_n]/(d^1x_1,...,d^1x_n)^{N+1},$$ sending $x_i$\, to $x_i+d^1x_i$\,. \\
Here, the differential operator $D_{ij}$\, corresponds to the $k[\underline{x}]$-linear map, that sends  $(\underline{d^1x})^I$\,  to $a_{I,i,j}$\,. This is a standard calculation. \\
The $k$-algebra $k[\underline{x}][\underline{d^1x}]/(\underline{d^1x}^{N+1})$\, is the $N^{th}$ jet module $\mathcal J^N(k[\underline{x}]/k)$\, of $k[\underline{x}]/k$\, and is a $k[\underline{x}]$-algebra which is a free $k[\underline{x}]$-module.\\
 The inverse limit 
\begin{gather*}\mathcal J^{\mathbb N}(k[\underline{x}]/k):=\projlim_{N\in\mathbb N}\mathcal J^N(k[\underline{x}]/k)=k[x_1,...,x_n][[d^1x_1,...,d^1x_n]]\\
\cong k[x_1,...,x_n]\widehat{\otimes}k[x_1,...,x_n]
\end{gather*} is the universal jet algebra of $k[\underline{x}],$\, where the last expression is the tensor product completed with respect to the ideal $I_{k[\underline{x}]/k}$, which is the kernel of the algebra multiplication map $\mu: k[\underline{x}]\otimes_kk[\underline{x}]\longrightarrow k[\underline{x}]$\,and is generated by $( d^1x_i=1\otimes x_i-x_i\otimes 1, i=1,...,n)$\,. 
For $k=\mathbb R,\mathbb C$\, these are just the linear partial differential operators with polynomial coefficients.  The formalism of jet-modules works for every finitely generated $k$-algebra, in particular for localizations $k[\underline{x}]_f$\,, where $f\in k[\underline{x}]$\,.\\
In this note, we want to use the formal algebraic properties  of the jet modules plus elementary base change properties in order to show that the very general system of linear homogenous partial differential operators with polynomial, or more generally rational coefficients $a_{I,J ,i,i}=\frac{p_{I,J, i,j}(\underline{x})}{f^n}$\, possesses no nonzero rational solutions $\frac{s(\underline{x})}{f^n}$\,.
\section{Basic properties of the Jet modules}
Recall that for a homomorphism of commutative rings $k\longrightarrow A$\,, and an $A$-module $M,$\, the $N^{th}$ jet module, $N\in \mathbb N_0$\, is defined as 
$$\mathcal J^N(M/k)=A\otimes_kM/I_{A/k}^{N+1}\cdot (A\otimes_kM),$$ where $I_{A/k}=\ker(\mu): A\otimes_kA\longrightarrow A$\, is the kernel of the multiplication map. The $A$-module structure of $\mathcal J^N(M/k)$\, is given by the first tensor factor. The natural map $d^N_{M/k}: M\longrightarrow \mathcal J^N(M/k)$\,, the universal derivation, is given by the map $m\mapsto \overline{1\otimes m}$\, which is $k$-linear.\\
 It is well known (or by \cite{Guenther}) that the jet-modules are of finite type over $A$ if $A$\, is a $k$-algebra essentially of finite type and satisfy the localization property,
 i.e., for $f\in A$\, we have $\mathcal J^N(M_f/k)=\mathcal J^N(M/k)_f$\,.\\
  Thus, if $f: X\longrightarrow S$\, is a morphism of  noetherian schemes of finite type and $\mathcal F$\, is a coherent $\mathcal O_X$-module, we can define the jet bundle $\mathcal J^N(\mathcal F/S)$\, by patching together the  jet- modules on  an affine covering of $X=\bigcup_{i=1}^n\Spec A_i$\, and $S=\bigcup_{j=1}^m\Spec B_j$, where $f(\Spec A_i)\subset \Spec B_j$\, and this is a  coherent $\mathcal O_X$-module. We make the following standard definition.
 \begin{definition}\mylabel{def:D46} Let $q:X\longrightarrow S$\, be an arbitrary morphism of finite type of noetherian schemes, or more generally of noetherian algebraic spaces and  $\mathcal F_i, i=1,2$\, be  quasicoherent sheaves on $X$. Then, a differential operator of order $\leq N$\, is a $q^{-1}\mathcal O_S$-linear map $D: \mathcal F_1\longrightarrow \mathcal \mathcal F_2$\, that can be factored as
  $\mathcal F_1\stackrel{d^N_{\mathcal F/S}}\longrightarrow \mathcal J^N(\mathcal F_1/S)$\,
  and an $\mathcal O_X$-linear map $\widetilde{D}: \mathcal J^N(\mathcal F_1/S)\longrightarrow \mathcal F_2$\,. \\
  A differential operator of order $N$\, is a differential operator that is of order $\leq N$\, but not of order $\leq N-1$\,.
 \end{definition}
 Thus, in this situation, there is a 1-1 correspondence between differential operators $\mathcal F_1\longrightarrow F_2$\, relative to $S$ and $\mathcal O_X$-linear maps $\mathcal J^N(\mathcal F_1/S)\longrightarrow \mathcal F_2$\,. 
 We need the following easy 
 \begin{lemma}\mylabel{lem:L45} Let $A\longrightarrow B$\, be a homomorphism of rings, $M$ be a $B$-module, and $Q$ be a $\mathcal J^N(B/A)$\,-module and $t: M\longrightarrow Q$\, be a $B$-linear map with respect to the second $B$-module structure on $\mathcal J^N(B/A)$\,. Then, there is a unique homomorphism of $\mathcal J^N(B/A)$\,-modules $\phi:\mathcal J^N(M/A)\longrightarrow Q$\, such that $t=\phi\circ d^N_{M/A}$\,.\\
 More generally, if $q: X\longrightarrow S$\, is a morphism of schemes, $\mathcal F$\, is a coherent $\mathcal O_X$-module and $\mathcal Q$\, a coherent $\mathcal J^N(X/S)$-module and $t: \mathcal F\longrightarrow Q$\, a map that is $\mathcal O_X$-linear with respect to the second $\mathcal O_X$-module structure of $\mathcal Q$\,, there is a unique $\mathcal J^N(X/S)$-linear map $\phi:\mathcal J^N(\mathcal F/k)\longrightarrow Q$ such that $t=\phi\circ d^N_{\mathcal F/S}$\,.
\end{lemma}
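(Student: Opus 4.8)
The plan is to reduce everything to the defining presentation $\mathcal J^N(M/A)=(B\otimes_A M)/I_{B/A}^{N+1}(B\otimes_A M)$ and to obtain $\phi$ by first building a map out of $B\otimes_A M$ that is linear over the whole ring $B\otimes_A B$. Write $\iota_1,\iota_2\colon B\longrightarrow B\otimes_A B$ for the two structure maps $b\mapsto b\otimes 1$ and $b\mapsto 1\otimes b$; composing with the quotient $B\otimes_A B\twoheadrightarrow\mathcal J^N(B/A)$ gives the two $B$-module structures on $\mathcal J^N(B/A)$, the first being the one used to regard $\mathcal J^N(M/A)$ as a $B$-module and the second being the one with respect to which $t$ is assumed $B$-linear. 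Since $Q$ is a $\mathcal J^N(B/A)$-module, it is in particular a $(B\otimes_A B)$-module on which $I_{B/A}^{N+1}$ acts as zero.

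First I would define $\psi\colon B\otimes_A M\longrightarrow Q$ on simple tensors by $\psi(b\otimes m)=\iota_1(b)\cdot t(m)$, the dot denoting the $(B\otimes_A B)$-action on $Q$. This is well defined because $(b,m)\mapsto\iota_1(b)\cdot t(m)$ is $A$-bilinear: additivity in each slot is clear, and for $A$-balancedness one uses that $\iota_1(a)=a\otimes 1=1\otimes a=\iota_2(a)$ inside $B\otimes_A B$ for $a\in A$ together with the hypothesis that $t$ is linear over the second $B$-structure, so that $\iota_1(ba)\cdot t(m)=\iota_1(b)\iota_1(a)\cdot t(m)=\iota_1(b)\iota_2(a)\cdot t(m)=\iota_1(b)\cdot t(am)$. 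A direct computation on simple tensors then shows $\psi$ is $(B\otimes_A B)$-linear for the action $(b_1\otimes b_2)\cdot(b\otimes m)=b_1b\otimes b_2m$: indeed $\psi\big((b_1\otimes b_2)(b\otimes m)\big)=\iota_1(b_1b)\cdot t(b_2m)=\iota_1(b_1)\iota_1(b)\iota_2(b_2)\cdot t(m)=(b_1\otimes b_2)\cdot\psi(b\otimes m)$.

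Next, since $I_{B/A}^{N+1}$ kills $Q$ and $\psi$ is $(B\otimes_A B)$-linear, $\psi$ carries $I_{B/A}^{N+1}(B\otimes_A M)$ into $0$ and hence factors through a map $\phi\colon\mathcal J^N(M/A)\longrightarrow Q$, which is $(B\otimes_A B)$-linear, i.e.\ $\mathcal J^N(B/A)$-linear, by construction, and which satisfies $\phi\big(d^N_{M/A}(m)\big)=\psi(1\otimes m)=t(m)$. For uniqueness, note that $b\otimes m=(b\otimes 1)\cdot(1\otimes m)$, so $B\otimes_A M$, and therefore its quotient $\mathcal J^N(M/A)$, is generated as a $(B\otimes_A B)$-module, equivalently as a $\mathcal J^N(B/A)$-module, by the image of $d^N_{M/A}$; any $\mathcal J^N(B/A)$-linear map with $\phi\circ d^N_{M/A}=t$ is thus pinned down on a generating set.

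Finally, for the global statement I would cover $X$ by affines $\Spec A_i$ each mapping into an affine of $S$, apply the affine case to obtain maps $\phi_i$ on $\mathcal J^N(\mathcal F/S)|_{\Spec A_i}$ — using that jet modules commute with localization — and glue: on an overlap the two restrictions both satisfy the universal property after further restricting to a common affine refinement, so the uniqueness just proved forces them to coincide, and the $\phi_i$ patch to a map $\phi$ with $\phi\circ d^N_{\mathcal F/S}=t$, again unique by the local uniqueness. I expect no real obstacle here; the only step requiring genuine care is the construction of $\psi$, whose well-definedness and $(B\otimes_A B)$-linearity rest entirely on the identity $\iota_1(a)=\iota_2(a)$ for $a\in A$ and on the fact that $t$ is linear for the \emph{second} $B$-structure rather than the first.
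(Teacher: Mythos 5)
Your proof is correct and is essentially the same argument as the paper's: the paper identifies $\mathcal J^N(M/A)$ with the extension of scalars $M\otimes_{B,p_2}\mathcal J^N(B/A)$ along the second structure map and cites the restriction/extension-of-scalars adjunction, while your explicit construction of $\psi$ on $B\otimes_A M$, its factorization through $I_{B/A}^{N+1}(B\otimes_A M)$, and the generation argument for uniqueness are exactly that adjunction unpacked at the level of the defining presentation. Your gluing of the affine cases via the localization property and local uniqueness likewise matches the paper's remark that the global case follows by the same argument.
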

\begin{proof} We have $\mathcal J^N(B/A)=B\otimes_AB/\mathcal I_{(B/A)}^{N+1}$\, and natural homomorphisms $p_1,p_2: B\longrightarrow \mathcal J^N(B/A)$\,. Then, by definition, $\mathcal J^N(M/A)=M\otimes_{B,p_2}\mathcal J^N(B/A)$\,. Then, the statement reduces to the easy fact, that, given a homomorphism of rings $k\longrightarrow l, $\,  a $k$-module $M_k$\,  and   an $l$-module $M_l$\, and a $k$-linear homomorphism $M_k\longrightarrow M_l$\,, there is a unique $l$-linear homomorphism $M_k\otimes_{k}l\longrightarrow M_l$\, which follows by the adjunction of restriction and extension of scalars.\\
The arguement in the global case is the same.
\end{proof}

\begin{proposition}\mylabel{prop:P2} (arbitrary push-forwards) Let $X\stackrel{f}\longrightarrow Y\stackrel{\pi}\longrightarrow S$\, be morphisms of schemes and $\mathcal F_i, i=1,2$\, be a quasi coherent sheaves on $X$. Let $D: \mathcal F_1\longrightarrow \mathcal F_2$\, be a differential operator between $\mathcal F_1$ and $\mathcal F_2$\, relative to $S$ of some order $N\in \mathbb N_0$\,. Then $f_*F_1\stackrel{f_*D}\longrightarrow f_*F_2$\, is a differential operator between the quasicoherent sheaves $f_*F_i, i=1,2$\, relative to $S$, where $f_*D$\, is taken in the category of sheaves of $(\pi\circ f)^{-1}\mathcal O_S$-modules on $X$.
\end{proposition}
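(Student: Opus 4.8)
The plan is to reduce everything to the factorization characterization of differential operators together with Lemma~\ref{lem:L45}, applied on $Y$ rather than on $X$. Since $D$ has order $\leq N$ relative to $S$, we may write $D = \widetilde D\circ d^N_{\mathcal F_1/S}$ with $\widetilde D\colon \mathcal J^N(\mathcal F_1/S)\to \mathcal F_2$ an $\mathcal O_X$-linear map. Applying $f_*$ gives $f_*D = (f_*\widetilde D)\circ(f_* d^N_{\mathcal F_1/S})$, and $f_*\widetilde D\colon f_*\mathcal J^N(\mathcal F_1/S)\to f_*\mathcal F_2$ is $f_*\mathcal O_X$-linear, hence $\mathcal O_Y$-linear. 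Thus it suffices to construct an $\mathcal O_Y$-linear map $\psi\colon \mathcal J^N(f_*\mathcal F_1/S)\to f_*\mathcal J^N(\mathcal F_1/S)$ with $f_* d^N_{\mathcal F_1/S} = \psi\circ d^N_{f_*\mathcal F_1/S}$; then $f_*D = (f_*\widetilde D)\circ\psi\circ d^N_{f_*\mathcal F_1/S}$ exhibits $f_*D$ as a differential operator of order $\leq N$ relative to $S$.

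To build $\psi$ I would first record the functoriality of jet algebras in a tower: for ring maps $A\to B\to C$, the map $B\otimes_A B\to C\otimes_A C$ carries $I_{B/A}$ into $I_{C/A}$ and so induces a $B$-algebra homomorphism $\mathcal J^N(B/A)\to \mathcal J^N(C/A)$ compatible with localization and with the universal derivations. Patching over affine covers, exactly as in the construction of $\mathcal J^N(-/S)$, yields a homomorphism of sheaves of rings $\mathcal J^N(Y/S)\to f_*\mathcal J^N(X/S)$ on $Y$. This makes $f_*\mathcal J^N(\mathcal F_1/S)$ a $\mathcal J^N(Y/S)$-module (push forward the $\mathcal J^N(X/S)$-module $\mathcal J^N(\mathcal F_1/S)$ and restrict scalars), and a diagram chase shows that $f_* d^N_{\mathcal F_1/S}$ is $\mathcal O_Y$-linear for the second $\mathcal O_Y$-module structure on the target (the one given by $p_2$). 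Lemma~\ref{lem:L45} — in its global form, which holds for quasi-coherent sheaves since the proof is merely the adjunction between restriction and extension of scalars — then provides the desired $\mathcal J^N(Y/S)$-linear, in particular $\mathcal O_Y$-linear, map $\psi$.

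Finally one checks that $(f_*\widetilde D)\circ\psi$ is $\mathcal O_Y$-linear for the standard $\mathcal O_Y$-module structure on $\mathcal J^N(f_*\mathcal F_1/S)$ (the one given by $p_1$) that appears in Definition~\ref{def:D46}: $\psi$ is $\mathcal J^N(Y/S)$-linear, hence linear for the $p_1$-structures on source and target, while $f_*\widetilde D$ is $f_*\mathcal O_X$-linear, and by naturality of $p_1$ in the tower $A\to B\to C$ the $f_*\mathcal O_X$-module structure on $f_*\mathcal J^N(X/S)$ is exactly the one restricted from $\mathcal J^N(Y/S)$ along $p_1$. With the two $\mathcal O$-module structures on each object reconciled, $(f_*\widetilde D)\circ\psi\circ d^N_{f_*\mathcal F_1/S} = f_*D$, which finishes the proof.

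The work is essentially bookkeeping: the only genuine constructions are the comparison maps $\mathcal J^N(Y/S)\to f_*\mathcal J^N(X/S)$ and $\psi$, both obtained by patching, and the only thing requiring care is systematically distinguishing the two $\mathcal O$-module structures (via $p_1$ and via $p_2$) carried by every jet object, so that the final composite is linear for the correct one. A minor point is that $f$ need not preserve coherence, so one should note in passing that Lemma~\ref{lem:L45} and the patching construction of the jet sheaves extend verbatim to quasi-coherent sheaves, $f$ being quasi-compact and quasi-separated; beyond this no new idea is required.
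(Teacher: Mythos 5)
Your proposal is correct and follows essentially the same route as the paper: factor $D=\widetilde D\circ d^N_{\mathcal F_1/S}$, observe that $f_*\widetilde D$ is $\mathcal O_Y$-linear, construct the comparison homomorphism $\mathcal J^N(Y/S)\to f_*\mathcal J^N(X/S)$ (the paper gets it via $J^N(f/S)\colon f^*\mathcal J^N(Y/S)\to\mathcal J^N(X/S)$ and adjunction, you build the same map by tower functoriality and patching), and then invoke Lemma~\ref{lem:L45} to produce the map $\mathcal J^N(f_*\mathcal F_1/S)\to f_*\mathcal J^N(\mathcal F_1/S)$ through which $f_*d^N_{\mathcal F_1/S}$ factors. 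Your extra bookkeeping about the two $\mathcal O$-module structures and the quasi-coherent case only makes explicit what the paper leaves implicit.
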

\begin{proof} Let $D$ be given by 
$$\widetilde{D}\circ d^N_{\mathcal F_1}: \mathcal F_1\longrightarrow \mathcal J^N(\mathcal F_1/S)\longrightarrow \mathcal F_2,$$ where the first map is $(\pi\circ f)^{-1}\mathcal O_S$-linear and $\widetilde{D}$\, is $\mathcal O_X$-linear. Then $f_*d^N_{\mathcal F/S}$\, is an $\pi^{-1}\mathcal O_S$-linear map from $f_*\mathcal F$\, to $f_*\mathcal J^N(\mathcal F/S)$\, and $f_*\widetilde{D}$\, is $f_*\mathcal O_X$\, and thus $\mathcal O_Y$-linear. The morphism $f$ induces a homomorphism 
 $$J^N(f/S): f^*J^N(Y/S)\longrightarrow J^N(X/S)$$ (same as for ordinary K\"ahler differentials) and by adjunction
 $$\mathcal J^N(Y/S)\longrightarrow f_*\mathcal J^N(X/S).$$ 
 The module $f_*\mathcal J^N(\mathcal F_1/S)$\, is an $f_*\mathcal J^N(X/S)$-module and thus an $\mathcal J^N(Y/S)$-module.
 By \prettyref{lem:L45}, there is a unique homomorphism 
 $$\phi: \mathcal J^N(f_*\mathcal F_1/S)\longrightarrow f_*\mathcal J^N(\mathcal F_1/S)$$
  of $\mathcal J^N(Y/S)$-modules such that $f_*d^N_{\mathcal F_1/S}=\phi\circ d^N_{f_*\mathcal F_1/S}$\,. Thus $$f_*D: f_*\mathcal F_1\longrightarrow f_*\mathcal F_2=(f_*\widetilde{D}\circ \phi)\circ d^N_{f_*\mathcal F_1/S}$$ is a linear partial linear differential operator over $S$.
\end{proof} 
\begin{lemma}\mylabel{lem:L2811} (Base change property) Let $A\longrightarrow B$\, and $A\longrightarrow C$\, be homomorpisms  (of finite type) of noetherian rings and $M$\, be a $B$-module. Then, for all $N\in \mathbb N$\, we have 
$$\mathcal J^N(M/A)\otimes_AC\cong \mathcal J^N(M\otimes_BC/C).$$
\end{lemma}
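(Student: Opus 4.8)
My plan is to isolate the case $M=B$ --- where the statement reads as an isomorphism of rings $\mathcal J^N(B/A)\otimes_A C\cong\mathcal J^N((B\otimes_A C)/C)$ --- and then to deduce the general case from it together with \prettyref{lem:L45}; throughout I read the right-hand side as $\mathcal J^N((M\otimes_A C)/C)$, with $M\otimes_A C$ the base change of $M$ along $B\to B\otimes_A C$, and I note in advance that no flatness of $C$ over $A$ will be used. \textbf{Step 1 (the case $M=B$).} By definition $\mathcal J^N(B/A)=(B\otimes_A B)/I_{B/A}^{N+1}$, so right exactness of $-\otimes_A C$ yields $\mathcal J^N(B/A)\otimes_A C\cong\bigl((B\otimes_A B)\otimes_A C\bigr)/J$, where $J$ is the ideal generated by the image of $I_{B/A}^{N+1}$. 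Under the canonical identification of $C$-algebras $(B\otimes_A B)\otimes_A C\cong(B\otimes_A C)\otimes_C(B\otimes_A C)$ the map $\mu_{B/A}\otimes\id_C$ corresponds to $\mu_{(B\otimes_A C)/C}$. The key point is that the image of $I_{B/A}$ generates $I_{(B\otimes_A C)/C}$: this holds because the conormal sequence $0\to I_{B/A}\to B\otimes_A B\xrightarrow{\mu_{B/A}}B\to 0$ is \emph{split} exact as a sequence of $A$-modules (the map $b\mapsto 1\otimes b$ is an $A$-linear section of $\mu_{B/A}$) and therefore stays exact after $-\otimes_A C$. Since extension of an ideal commutes with forming $(N+1)$-st powers, $J=I_{(B\otimes_A C)/C}^{N+1}$, so the quotient is precisely $\mathcal J^N((B\otimes_A C)/C)$; a short check shows the isomorphism carries the two structure maps $p_1,p_2$ of the source to those of the target.

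\textbf{Step 2 (general $M$).} Recall from the proof of \prettyref{lem:L45} the natural identification $\mathcal J^N(N/A)\cong\mathcal J^N(B/A)\otimes_{B,p_2}N$, valid for every $B$-module $N$. Commuting tensor products --- using only associativity together with $N\otimes_A C\cong N\otimes_B(B\otimes_A C)$ --- gives
$$\mathcal J^N(M/A)\otimes_A C\;\cong\;\mathcal J^N(B/A)\otimes_{B,p_2}(M\otimes_A C)\;\cong\;\bigl(\mathcal J^N(B/A)\otimes_A C\bigr)\otimes_{B\otimes_A C}(M\otimes_A C),$$
where on the right $\mathcal J^N(B/A)\otimes_A C$ carries the $(B\otimes_A C)$-module structure $p_2\otimes\id_C$. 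By Step 1 the last term is $\mathcal J^N((B\otimes_A C)/C)\otimes_{B\otimes_A C,\,p_2}(M\otimes_A C)$, and by the same identification applied over the ring $B\otimes_A C$ this is $\mathcal J^N((M\otimes_A C)/C)$. Chasing the module structures through this chain shows the composite is a $(B\otimes_A C)$-linear isomorphism and is natural in $M$.

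\textbf{The main obstacle.} Everything outside Step 1 is formal manipulation of tensor products and of the first/second module structures on the jet algebras; the one substantive point is the identity $I_{(B\otimes_A C)/C}=I_{B/A}\cdot\bigl((B\otimes_A C)\otimes_C(B\otimes_A C)\bigr)$, and it is exactly there that the splitting of the conormal sequence over $A$ is needed --- without it the argument would fail for non-flat $C$. If one prefers to avoid the bookkeeping of Step 2, an equivalent route is to observe that, via Step 1, both $\mathcal J^N(M/A)\otimes_A C$ and $\mathcal J^N((M\otimes_A C)/C)$ represent, on the category of $\mathcal J^N((B\otimes_A C)/C)$-modules $Q$, the same functor $Q\mapsto\{\,t\colon M\to Q\ \text{$B$-linear for the second structure}\,\}$, and are therefore canonically isomorphic by \prettyref{lem:L45}.
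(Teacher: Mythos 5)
Your proof is correct, and it takes a genuinely different route from the paper's in both steps (you do share the paper's overall decomposition into the case $M=B$ followed by general $M$, and your reading of the right-hand side as $\mathcal J^N((M\otimes_AC)/C)$ is indeed what the paper intends by its $M\otimes_BC$). For $M=B$ the paper argues via presentations: it first computes $\mathcal J^N(A[\underline{x}]/A)\otimes_AC$ explicitly for a polynomial algebra and then uses the presentation $\mathcal J^N(B/A)=A[\underline{x}][\underline{d^1x}]/((\underline{d^1x})^{N+1},(f_j,d^1f_j))$ attached to a presentation $B=A[x_i]/(f_j)$; you instead work intrinsically with the diagonal ideal, identifying $(B\otimes_AB)\otimes_AC\cong(B\otimes_AC)\otimes_C(B\otimes_AC)$ and showing the extension of $I_{B/A}$ is $I_{(B\otimes_AC)/C}$, which avoids any choice of presentation (and in fact needs no finiteness or noetherian hypotheses). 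One small correction there: the splitting of $0\to I_{B/A}\to B\otimes_AB\to B\to 0$ is not actually needed for your key identity, since right exactness of $-\otimes_AC$ already gives exactness at the middle term, i.e. the image of $I_{B/A}\otimes_AC$ equals $\ker(\mu_{(B\otimes_AC)/C})$; the splitting only buys injectivity on the left, which you never use, so your remark that the argument would fail without it for non-flat $C$ overstates its role (harmlessly). For general $M$ the paper constructs the comparison map $\phi_M$ via the universal property (\prettyref{lem:L45}) and then proves it is an isomorphism by a d\'evissage: direct sums, right exactness of both functors, a free presentation of $M$ and the five lemma; you instead exploit the identity $\mathcal J^N(M/A)\cong\mathcal J^N(B/A)\otimes_{B,p_2}M$ (recorded in the paper's proof of \prettyref{lem:L45}) and conclude by associativity of tensor products, which replaces the five-lemma argument by pure bookkeeping and makes the isomorphism visibly an extension of scalars of the $M=B$ case; your alternative closing argument via representability of both sides is also valid. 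Each approach has its merits: the paper's is self-contained at the level of generators and relations and reuses its functorial machinery, while yours is shorter, presentation-free, and makes the precise module structures and the naturality in $M$ transparent.
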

\begin{proof} First, we treat the case, where $M=C$\, and $B=A[\underline{x}]$\, is a free $A$-algebra. Then , it is well known that
\begin{gather*}\mathcal J^N(A[\underline{x}]/A)\otimes_AC=A[\underline{x}][\underline{d^1x}]/(\underline{d^1x})^{N+1}\otimes_AC=\\
C[\underline{x}][\underline{d^1x}]/(\underline{d^1x})^{n+1}=\mathcal J^N(C[\underline{x}]/C)=\mathcal J^N(B\otimes_AC/C).
\end{gather*}
Next, we treat the case of an arbitrary $A$-algebra $B$\,. Let 
$$B=A[x_i| i\in I]/(f_j| j\in J)$$  be a presentation of $B$. Either by common knowledge, or by \cite{Guenther}, we have
$$\mathcal J^N(B/A)=A[\underline{x}][\underline{d^1x}]/((\underline{d^1x_i})^{N+1},(f_j, d^1f_j|j\in J),$$
where $d^1f_j$\, is defined by $f_j+d^1f_j=f_j(\underline{x+d^1x})$\, or by $d^1f_j= \overline{1\otimes f_j-f_j\otimes 1}$\,. Then the result is clear because a presentation of $B\otimes_AC$\, is then given by 
$B\otimes_AC=C[x_i| i\in I]/(f_j| j\in J)$\,.\\
Now, we treat the general case. We fix an isomorphism $\mathcal J^N(B/A)\otimes_AC\cong \mathcal J^N(B\otimes_AC/C)$\,.\\
 There is a derivation 
$$M\otimes_AC\stackrel{d^N_{M/A}\otimes_A\text{Id}_C}\longrightarrow \mathcal J^N(M/A)\otimes_AC$$
which gives by \prettyref{lem:L45} and the above proved isomorphism 
$$\mathcal J^N(B/A)\otimes_AC\cong \mathcal J^N(B\otimes_AC/C)$$
 a $\mathcal J^N(B\otimes_AC/C)$-linear map
$$\phi_M:\mathcal J^N(M\otimes_AC/C)\longrightarrow \mathcal J^N(M/A)\otimes_AC$$\, which is a natural transformation of functors 
$$(A-\Mod)\longrightarrow (\mathcal J^N(B\otimes_AC/C)-\Mod)$$
(that this is a natural transformation of functors follows from the representing property of the jet modules).
We show that $\phi_M$\, is an isomorphism for all $M\in B-\Mod$\,. First, since the construction of the jet-modules commutes with arbitrary direct sums, we get the result for free $B$-modules.\\
Next, both functors are right exact functors from 
$$(A-\Mod)\quad \text{to}\quad \mathcal J^N(B\otimes_AC/C)-\Mod.$$
 Choosing a presentation 
$$B^{\oplus J}\longrightarrow B^{\oplus I}\longrightarrow M\longrightarrow 0,$$
we get the result for general $M\in (B-\Mod)$\, by the five lemma.
\end{proof}
\begin{lemma}\mylabel{lem:L1130} Let $A\longrightarrow B$\, be a homomorphism of commutative rings $M$\, be a $B$-module and $N$\, be an $A$-module. Then, for all $N\in \mathbb N_0$\, there is a canonical isomorphism
$$\phi_N: \mathcal J^N(M\otimes_AN/A)\longrightarrow \mathcal J^N(M/A)\otimes_AN.$$
\end{lemma}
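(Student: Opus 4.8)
The plan is to follow the pattern of the proof of \prettyref{lem:L2811}: construct $\phi_N$ from the universal property of the jet modules (\prettyref{lem:L45}), observe that both sides are right exact, direct-sum-preserving functors of $M$, verify the isomorphism on free $B$-modules, and conclude by the five lemma.

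First I would build $\phi_N$. Recall that $\mathcal J^N(M/A)=(B\otimes_AM)/I_{B/A}^{N+1}(B\otimes_AM)\cong\mathcal J^N(B/A)\otimes_{B,p_2}M$ is a $\mathcal J^N(B/A)$-module and that the universal derivation $d^N_{M/A}\colon M\to\mathcal J^N(M/A)$ is $B$-linear for the second $B$-module structure (this is the tautological instance of \prettyref{lem:L45}). Tensoring over $A$ with the $A$-module $N$ produces a map $d^N_{M/A}\otimes_A\id_N\colon M\otimes_AN\to\mathcal J^N(M/A)\otimes_AN$ whose target is again a $\mathcal J^N(B/A)$-module and which, by the previous remark, is $B$-linear for the second $B$-module structure on that $\mathcal J^N(B/A)$-module. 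Hence \prettyref{lem:L45} gives a unique $\mathcal J^N(B/A)$-linear map $\phi_N\colon\mathcal J^N(M\otimes_AN/A)\to\mathcal J^N(M/A)\otimes_AN$ with $d^N_{M/A}\otimes_A\id_N=\phi_N\circ d^N_{M\otimes_AN/A}$; by the representing property of the jet modules, $M\mapsto\phi_N$ is a natural transformation of functors $(B-\Mod)\to(\mathcal J^N(B/A)-\Mod)$.

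Next I would note that both functors $M\mapsto\mathcal J^N(M\otimes_AN/A)$ and $M\mapsto\mathcal J^N(M/A)\otimes_AN$ are right exact and commute with arbitrary direct sums: $\mathcal J^N(-/A)=\mathcal J^N(B/A)\otimes_{B,p_2}(-)$ is a base change functor, $(-)\otimes_AN$ is right exact and additive, and the functor $M\mapsto M\otimes_AN$ on $(B-\Mod)$ (with $B$ acting through $M$) is likewise. For $M=B$ one computes directly $\mathcal J^N(B\otimes_AN/A)=\mathcal J^N(B/A)\otimes_{B,p_2}(B\otimes_AN)=\mathcal J^N(B/A)\otimes_AN$, with $\phi_N$ the resulting identification, so the statement holds for free $B$-modules as well. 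Finally, taking a presentation $B^{\oplus J}\to B^{\oplus I}\to M\to0$, applying both functors and the natural transformation, and invoking the five lemma shows that $\phi_N$ is an isomorphism for arbitrary $M$.

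The one point requiring care is the bookkeeping of the two $B$-module structures on $\mathcal J^N(B/A)$, so that \prettyref{lem:L45} really applies to $d^N_{M/A}\otimes_A\id_N$ and so that the rank-one identification is compatible with the canonically constructed $\phi_N$. A shortcut that avoids \prettyref{lem:L45} is available: by right exactness of $(-)\otimes_AN$, the module $\mathcal J^N(M/A)\otimes_AN$ is $B\otimes_AM\otimes_AN$ modulo the image of $I_{B/A}^{N+1}(B\otimes_AM)\otimes_AN$, whereas $\mathcal J^N(M\otimes_AN/A)$ is the same $B\otimes_AM\otimes_AN$ modulo $I_{B/A}^{N+1}(B\otimes_A(M\otimes_AN))$; since the $B\otimes_AB$-action on $B\otimes_A(M\otimes_AN)$ only affects the $B\otimes_AM$-part, these two submodules coincide and the isomorphism drops out directly. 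I would present whichever of these fits the intended use of $\phi_N$ downstream.
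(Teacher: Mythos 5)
Your proposal is correct, and its first half (the construction of $\phi_N$ from the universal property in \prettyref{lem:L45}, applied to $d^N_{M/A}\otimes_A\id_N$, which is linear for the second $B$-structure) is exactly what the paper does. Where you diverge is in the dévissage: the paper fixes the $B$-module $M$ and treats both sides as right exact functors of the $A$-module $N$, checking the free case $N=A^{\oplus I}$ and then using a free presentation $A^{\oplus J}\to A^{\oplus I}\to N\to 0$ and the five lemma; you instead fix $N$ and run the same argument in the $B$-module variable $M$, with base case $M=B$ (where $\mathcal J^N(B\otimes_AN/A)\cong(B\otimes_AB/I_{B/A}^{N+1})\otimes_AN=\mathcal J^N(B/A)\otimes_AN$) and a presentation $B^{\oplus J}\to B^{\oplus I}\to M\to 0$. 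Both mirror-image arguments work, since each composite functor is right exact and commutes with direct sums in the chosen variable and naturality follows from the uniqueness clause of \prettyref{lem:L45}; the paper's choice of varying $N$ has the small advantage that the naturality in the $A$-module is precisely what is used downstream (\prettyref{rem:R30111}, \prettyref{prop:P77}), but your construction yields that naturality too. Your ``shortcut'' is also sound and is arguably the cleanest route: since $(-)\otimes_AN$ is right exact, $\mathcal J^N(M/A)\otimes_AN$ is the quotient of $B\otimes_AM\otimes_AN$ by the image of $I_{B/A}^{N+1}(B\otimes_AM)\otimes_AN$, while $\mathcal J^N(M\otimes_AN/A)$ is the quotient of the same module by $I_{B/A}^{N+1}\cdot\bigl(B\otimes_A(M\otimes_AN)\bigr)$, and these submodules agree because the $B\otimes_AB$-action only involves the $B\otimes_AM$ factors; this gives the isomorphism with no dévissage at all, at the cost of checking that the resulting identification is the canonical $\phi_N$ (which it is, since both send $\overline{1\otimes(m\otimes n)}$ to $d^N_{M/A}(m)\otimes n$).
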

\begin{proof}  We fix the $B$-module $M$\,. The natural map
$$d^N_{M/A}\otimes_A\text{Id}_N: M\otimes_AN\longrightarrow \mathcal J^N(M/A)\otimes_AN$$ is $B$-linear with respect to the second $B$-module structure on $\mathcal J^N(M/A)\otimes_AN$\,. By \prettyref{lem:L45}, there is a unique homomorphism  of $\mathcal J^N(B/A)$-modules 
$$\phi_N: \mathcal J^N(M\otimes_AN/A)\longrightarrow \mathcal J^N(M/A)\otimes_AN$$ which is a natural transformation of  right exact functors from $(A-\Mod)$\, to $\mathcal J^N(B/A)-\Mod$\,. We show that $\phi_N$\, is an isomorphism for all $A$-modules $N$.\\
First, if $N=A^{\oplus I}$\, is a free $A$-module, $\phi_N$\, is an isomorphism since the formation of jet-modules commutes with arbitrary direct sums.\\
If $N$\, is arbitrary, choose a free presentation 
$$A^{\oplus J}\longrightarrow A^{\oplus I}\twoheadrightarrow N\longrightarrow 0.$$
Since $\phi_{A^{\oplus J}}$\, and $\phi_{A^{\oplus I}}$\, are isomorphisms, it follows by the five lemma that $\phi_N$\, is an isomorphism.
\end{proof}
\begin{remark}\mylabel{rem:R30111} If $q: X\longrightarrow S$\, is a morphism of noetherian schemes, $D: \mathcal E_1\longrightarrow \mathcal E_2$\, is a differential operator relative to $S$ and $\mathcal F$\, is a quasi coherent $\mathcal O_S$-module, it follows from the lemma just proved that $D\otimes_{\mathcal O_S}\text{Id}_{\mathcal F}$\, is a differential operator on $X$ relative to $S$.
\end{remark}
\begin{lemma}\mylabel{lem:L28117} Let $q: X\longrightarrow S$\, be a morphism of finite type of noetherian schemes , $\mathcal F, \mathcal G$\, be quasi coherent $\mathcal O_X$-modules and $\mathcal F_1$\, be a coherent $\mathcal O_X$-submodule of $\mathcal F$\,. Let $D: \mathcal F\longrightarrow \mathcal G$\, be a differential operator relative to $S$ of some order $N$\,. Then $\mathcal G_1:=\mathcal O_X\cdot D(\mathcal F_1)\subset \mathcal G$\, is a coherent submodule of $\mathcal G$\,.
\end{lemma}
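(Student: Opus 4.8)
The plan is to restrict $D$ to $\mathcal F_1$ and factor it through $\mathcal J^N(\mathcal F_1/S)$, thereby identifying $\mathcal G_1$ with the sheaf-theoretic image of a \emph{coherent} sheaf under an $\mathcal O_X$-linear map; the one nontrivial input will be an affine computation showing that $\mathcal J^N(\mathcal F_1/S)$ is generated over $\mathcal O_X$ by the image of its universal derivation. The inclusion $\mathcal F_1\hookrightarrow\mathcal F$, together with functoriality of $\mathcal J^N(-/S)$ in its argument (cf.\ \prettyref{lem:L45} and the representing property of the jet modules), gives a commutative square, so that $D|_{\mathcal F_1}=\widetilde D\circ\iota\circ d^N_{\mathcal F_1/S}$, where $\iota\colon\mathcal J^N(\mathcal F_1/S)\to\mathcal J^N(\mathcal F/S)$ is the natural map and $\widetilde D$ is the $\mathcal O_X$-linear map attached to $D$ by \prettyref{def:D46}. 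Put $\psi:=\widetilde D\circ\iota\colon\mathcal J^N(\mathcal F_1/S)\to\mathcal G$; it is $\mathcal O_X$-linear and $D(\mathcal F_1)=\psi\bigl(d^N_{\mathcal F_1/S}(\mathcal F_1)\bigr)$.

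Granting for the moment that $\mathcal J^N(\mathcal F_1/S)=\mathcal O_X\cdot d^N_{\mathcal F_1/S}(\mathcal F_1)$, we obtain
$$\mathcal G_1=\mathcal O_X\cdot D(\mathcal F_1)=\mathcal O_X\cdot\psi\bigl(d^N_{\mathcal F_1/S}(\mathcal F_1)\bigr)=\psi\bigl(\mathcal J^N(\mathcal F_1/S)\bigr)=\img\psi .$$
Since $\mathcal F_1$ is coherent and $q$ is of finite type over the noetherian base $S$, the sheaf $\mathcal J^N(\mathcal F_1/S)$ is a coherent $\mathcal O_X$-module; hence $\img\psi$, being the image of a coherent sheaf under an $\mathcal O_X$-linear map into the quasi-coherent sheaf $\mathcal G$ over the noetherian scheme $X$, is quasi-coherent and of finite type, and therefore coherent.

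It remains to establish the claim, which is local and is purely a statement about $\mathcal F_1$: for a finite-type ring map $A\to B$ and \emph{any} $B$-module $M_1$ one has $\mathcal J^N(M_1/A)=B\cdot d^N_{M_1/A}(M_1)$. Write $B=A[x_1,\dots,x_n]/\ideal a$. Recall (as in the proof of \prettyref{lem:L2811}) that $\mathcal J^N(B/A)$ is generated as a $B$-module by the finitely many monomials $(\underline{d^1x})^{I}$ with $|I|\le N$, that $\mathcal J^N(M_1/A)$ is a module over the jet algebra $\mathcal J^N(B/A)$ with $B$ acting through $p_1$, that $p_2(x_i)=x_i+d^1x_i$, and that $d^N_{M_1/A}(bm)=p_2(b)\cdot d^N_{M_1/A}(m)$. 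Taking $b=x_i$ gives $(d^1x_i)\cdot d^N_{M_1/A}(m)=d^N_{M_1/A}(x_im)-x_i\cdot d^N_{M_1/A}(m)$, which lies in $B\cdot d^N_{M_1/A}(M_1)$ for every $m\in M_1$, precisely because $x_im\in M_1$. An easy induction on $|I|$ (using that $\mathcal J^N(B/A)$ is commutative) then shows $(\underline{d^1x})^{I}\cdot d^N_{M_1/A}(m)\in B\cdot d^N_{M_1/A}(M_1)$ for all $m\in M_1$ and all $|I|\le N$. Finally, an arbitrary element of $\mathcal J^N(M_1/A)$ is a $B$-linear combination of elements $\xi\cdot d^N_{M_1/A}(m)$ with $\xi\in\mathcal J^N(B/A)$ and $m\in M_1$; expanding each $\xi$ in the monomials $(\underline{d^1x})^{I}$, $|I|\le N$, gives the claim.

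The main obstacle is exactly this last paragraph. The tempting shortcut — that $\mathcal G_1$ is generated by $D(m_1),\dots,D(m_r)$ for a finite generating set $m_1,\dots,m_r$ of $\mathcal F_1$ — is wrong, because $D$, equivalently $d^N_{\mathcal F_1/S}$, is not $\mathcal O_X$-linear; what keeps finiteness under control is that multiplying generators by scalars produces only the finitely many further elements $(\underline{d^1x})^{I}\,d^N_{M_1/A}(m_j)$ with $|I|\le N$, and this uses both the bound $N$ on the order of $D$ and the fact that $\mathcal F_1$ is a submodule, so that $x_im_j$ again lies in $\mathcal F_1$. (Alternatively one can bypass $\mathcal J^N(\mathcal F_1/S)$ entirely and simply note, affine-locally, that $B\cdot D(M_1)$ is a $B$-submodule of the finitely generated module $\sum_{j,\,|I|\le N}B\cdot\widetilde D\bigl((\underline{d^1x})^{I}\,d^N_{M/A}(m_j)\bigr)$ and hence finitely generated, $B$ being noetherian; passing through $\mathcal J^N(\mathcal F_1/S)$ is merely a convenient way to get quasi-coherence globally at the same time.)
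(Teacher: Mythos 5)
Your proposal is correct and takes essentially the same route as the paper: both factor $D|_{\mathcal F_1}$ through the coherent jet module $\mathcal J^N(\mathcal F_1/S)$ using \prettyref{lem:L45} and the functoriality of jets, and then conclude coherence of $\mathcal G_1$ from $\mathcal O_X$-linearity of the induced map and noetherianness. The only (harmless) extra step is your affine computation that $\mathcal J^N(\mathcal F_1/S)=\mathcal O_X\cdot d^N_{\mathcal F_1/S}(\mathcal F_1)$ --- which in fact is immediate, since $\mathcal J^N(M_1/A)$ is a quotient of $B\otimes_AM_1$ and $\overline{b\otimes m}=b\cdot d^N_{M_1/A}(m)$ --- and it even tightens the conclusion to $\mathcal G_1=\img\psi$, whereas the paper only exhibits $\mathcal G_1$ inside the coherent image $\widetilde D(\mathcal J_1)$.
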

\begin{proof} The operator $D$\, corresponds to an $\mathcal O_X$-linear map $\widetilde{D}: \mathcal J^N(\mathcal F/S)\longrightarrow \mathcal G$\,. Since the construction of the jet-modules is functorial, there is an induced homomorphism of $\mathcal J^N(X/S)$-modules 
$$\mathcal J^N(\mathcal F_1/S)\longrightarrow \mathcal J^N(\mathcal F/S)$$ with coherent image $\mathcal J_1\subset \mathcal J^N(\mathcal F/S)$\,, since the jet-module $\mathcal J^N(\mathcal F_1/S)$\, is a coherent $\mathcal O_X$-module.\\
 From the representing property (\prettyref{lem:L45}), it follows that $d^N_{\mathcal F/S}(\mathcal F_1)\subset \mathcal J_1$.\\
 (Put $\mathcal I_1=\mathcal J^N(X/S)\cdot d^N_{\mathcal F/S}(\mathcal F_1)$\, and apply this lemma to obtain that there is an equality $\mathcal I_1= \mathcal J_1$\,. But then $D(\mathcal F_1)\subset \widetilde{D}(\mathcal J_1)\subset \mathcal G$\, is coherent.
 \end{proof} 
\begin{corollary}\mylabel{cor:C28115} Let $q: X\longrightarrow S$\, be a morphism of finite type of noetherian schemes and $\mathcal F_1,\mathcal F_2$\, be quasicoherent sheaves on $X$\, that are both countable unions of its coherent subsheaves. Let $D: \mathcal F_1\longrightarrow \mathcal F_2$\, be a differential operator relative to $S$ of some order $N\in \mathbb N$\,. Then, $D$ can be written as a union (inductive limit) of differential operators $D_n: \mathcal F_1^n\longrightarrow \mathcal F_2^n, n\in \mathbb N$\,.
\end{corollary}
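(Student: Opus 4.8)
The plan is to exhaust $\mathcal{F}_1$ and $\mathcal{F}_2$ compatibly by coherent subsheaves, restrict $D$ to the pieces, and check by functoriality of the jet modules that each piece is again a differential operator of order $\leq N$ relative to $S$. First I would arrange that the given countable unions are \emph{increasing}: a finite sum of coherent subsheaves of a quasicoherent sheaf is coherent on the noetherian scheme $X$, so replacing each family by the sequence of its finite partial sums I may assume $\mathcal{F}_1=\bigcup_n\mathcal{F}_1^n$ and $\mathcal{F}_2=\bigcup_n\mathcal{G}^n$ with $\mathcal{F}_1^1\subseteq\mathcal{F}_1^2\subseteq\cdots$ and $\mathcal{G}^1\subseteq\mathcal{G}^2\subseteq\cdots$ coherent.

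The one elementary fact I would isolate is that, because $X$ is noetherian, a coherent subsheaf $\mathcal{C}$ of an increasing union $\bigcup_n\mathcal{H}_n$ of coherent subsheaves of a quasicoherent sheaf already lies in some $\mathcal{H}_n$: cover $X$ by finitely many affine opens $\Spec A_i$ with $A_i$ noetherian; on each $\Spec A_i$ the module of $\mathcal{C}$ is finitely generated, so its finitely many generators all lie in one term of the union, and one takes the maximum over the finitely many resulting indices.

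Next, for each $n$ let $\mathcal{J}_1^n\subseteq\mathcal{J}^N(\mathcal{F}_1/S)$ be the image of the natural $\mathcal{O}_X$-linear map $\mathcal{J}^N(\mathcal{F}_1^n/S)\to\mathcal{J}^N(\mathcal{F}_1/S)$ coming from functoriality of the jet modules. Since $\mathcal{F}_1^n$ is coherent, $\mathcal{J}^N(\mathcal{F}_1^n/S)$ and hence $\mathcal{J}_1^n$ is coherent, and the functoriality square for the universal derivation gives $d^N_{\mathcal{F}_1/S}(\mathcal{F}_1^n)\subseteq\mathcal{J}_1^n$, exactly as in the proof of \prettyref{lem:L28117}. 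Writing $\widetilde{D}\colon\mathcal{J}^N(\mathcal{F}_1/S)\to\mathcal{F}_2$ for the $\mathcal{O}_X$-linear map corresponding to $D$, the subsheaf $\widetilde{D}(\mathcal{J}_1^n)\subseteq\mathcal{F}_2$ is then coherent, and I would set $\mathcal{F}_2^n:=\widetilde{D}(\mathcal{J}_1^n)+\mathcal{G}^n$. These form an increasing sequence of coherent subsheaves with $\bigcup_n\mathcal{F}_2^n=\mathcal{F}_2$, and $D(\mathcal{F}_1^n)=\widetilde{D}(d^N_{\mathcal{F}_1/S}(\mathcal{F}_1^n))\subseteq\widetilde{D}(\mathcal{J}_1^n)\subseteq\mathcal{F}_2^n$, so $D$ restricts and corestricts to a map $D_n\colon\mathcal{F}_1^n\to\mathcal{F}_2^n$.

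It then remains to check that each $D_n$ is a differential operator of order $\leq N$ relative to $S$ and that $D=\varinjlim_n D_n$. For the first point, the composite $\widetilde{D}\circ\bigl(\mathcal{J}^N(\mathcal{F}_1^n/S)\to\mathcal{J}^N(\mathcal{F}_1/S)\bigr)$ is $\mathcal{O}_X$-linear with image $\widetilde{D}(\mathcal{J}_1^n)\subseteq\mathcal{F}_2^n$, hence corestricts to an $\mathcal{O}_X$-linear $\widetilde{D_n}\colon\mathcal{J}^N(\mathcal{F}_1^n/S)\to\mathcal{F}_2^n$, and the functoriality square for $d^N$ (an instance of the representing property, \prettyref{lem:L45}) gives $\widetilde{D_n}\circ d^N_{\mathcal{F}_1^n/S}=D_n$; by \prettyref{def:D46} this exhibits $D_n$ as a differential operator of order $\leq N$. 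For the second point, the $D_n$ are by construction restrictions of the single map $D$, hence compatible with the transition maps $\mathcal{F}_1^n\hookrightarrow\mathcal{F}_1^{n+1}$ and $\mathcal{F}_2^n\hookrightarrow\mathcal{F}_2^{n+1}$, and since $\mathcal{F}_1=\varinjlim_n\mathcal{F}_1^n$ and $\mathcal{F}_2=\varinjlim_n\mathcal{F}_2^n$, the colimit $\varinjlim_n D_n$ agrees with $D$ on every $\mathcal{F}_1^n$ and therefore equals $D$. The main point requiring care is that the target $\mathcal{F}_2^n$ must be taken large enough to contain the whole image of $\widetilde{D_n}$ — not merely $D(\mathcal{F}_1^n)$, since $\widetilde{D}$ is only $\mathcal{O}_X$-linear and not $\mathcal{J}^N(X/S)$-linear — while remaining coherent and still exhausting $\mathcal{F}_2$, and it is exactly the noetherian fact isolated above that makes such a choice possible.
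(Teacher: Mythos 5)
Your proof is correct and follows the same basic strategy as the paper: restrict $D$ to a coherent exhaustion $\mathcal F_1=\bigcup_n\mathcal F_1^n$ and use the coherence of $\mathcal J^N(\mathcal F_1^n/S)$ together with the functoriality of the universal derivation (the argument of \prettyref{lem:L28117}) to produce coherent targets. The difference lies in the choice of target: the paper simply sets $\mathcal F_2^n:=\mathcal O_X\cdot D(\mathcal F_1^n)$, which \prettyref{lem:L28117} shows to be coherent, and stops there; you instead take $\mathcal F_2^n:=\widetilde D(\mathcal J_1^n)+\mathcal G^n$, where $\mathcal J_1^n$ is the image of $\mathcal J^N(\mathcal F_1^n/S)\to\mathcal J^N(\mathcal F_1/S)$ and the $\mathcal G^n$ exhaust $\mathcal F_2$. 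Your larger choice buys two things the paper's one-line proof leaves implicit: first, the induced map $\widetilde{D_n}=\widetilde D\circ\bigl(\mathcal J^N(\mathcal F_1^n/S)\to\mathcal J^N(\mathcal F_1/S)\bigr)$ visibly lands in $\mathcal F_2^n$, so $D_n$ is literally a differential operator of order $\leq N$ into $\mathcal F_2^n$ in the sense of \prettyref{def:D46}; with the paper's target this factorization is not immediate, since $\widetilde D$ is only $\mathcal O_X$-linear and $\widetilde D(\mathcal J_1^n)$ may be strictly larger than $\mathcal O_X\cdot D(\mathcal F_1^n)$. Second, adding $\mathcal G^n$ guarantees that the targets exhaust $\mathcal F_2$, which is what one needs to read $D=\varinjlim_n D_n$ as stated rather than merely an exhaustion of the source. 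One small remark: the ``noetherian fact'' you isolate (a coherent subsheaf of an increasing union of coherent subsheaves lies in a single term) is never actually used once you define $\mathcal F_2^n$ by the explicit sum, so you could drop it; it would only be needed if you insisted on choosing the targets among the originally given coherent subsheaves of $\mathcal F_2$.
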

\begin{proof} Let $\mathcal F=\bigcup_{n\in \mathbb N}\mathcal F_1^n$\,. By the previous corollary,  for all $n\in \mathbb N$\,, $\mathcal F_2^n:=\mathcal O_X\cdot D(\mathcal F_1^n)$\, is a coherent subsheaf of $\mathcal F_2$\, and we put 
$$D_n: \mathcal F_1^n\longrightarrow \mathcal F_2^n\quad \forall n\in \mathbb N.$$
\end{proof}  
\section{Differential operators in families}
The basic setting of this section is a  proper morphism  of finite type of noetherian  schemes $q:X\longrightarrow S$\, and coherent sheaves $\mathcal E_i, i=1,2$\, on $X$, flat over $S$, equipped with a differential operator $D: \mathcal E_1\longrightarrow \mathcal E_2$\, of some order $N$  relative to $S$. We want to study the sheaf of $q^{-1}(\mathcal O_S)$-modules $\text{ker}(D)$\,.
\begin{proposition}\mylabel{prop:P77} With notation as above, let $S=\Spec A$\, be affine with $A$  being a noetherian ring. The functor 
\begin{gather*}H^0(X, \text{ker}(D\otimes_A \text{id}_{(-)}): (A-\text{Mod})\longrightarrow (A-\text{Mod})\\
               M\mapsto H^0(X, \text{ker}(D\otimes_A\text{id}_M))
               \end{gather*}
is  naturally equivalent  to a functor $\text{Hom}_A(Q,M) $, where the $A$-module $Q$ is uniquely determined by $D$\, and is  finitely generated.
\end{proposition}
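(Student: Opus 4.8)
The plan is to reduce the statement to Grothendieck's theorem on cohomology and base change together with the Yoneda lemma, so that the differential-operator structure of $D$ enters only through the (trivial) fact that $D$ is $q^{-1}\mathcal O_S$-linear.

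First I would note that, being a differential operator relative to $S$, $D$ is in particular a $q^{-1}\mathcal O_S$-linear morphism of sheaves, so for every $A$-module $M$ the morphism $D\otimes_A\id_M\colon\mathcal E_1\otimes_AM\to\mathcal E_2\otimes_AM$ (with $\mathcal E_i\otimes_AM:=\mathcal E_i\otimes_{\mathcal O_X}q^*\widetilde M$) is defined and functorial in $M$. Since $\Gamma(X,-)$ is left exact,
$$H^0\big(X,\ker(D\otimes_A\id_M)\big)=\ker\Big(H^0(X,\mathcal E_1\otimes_AM)\xrightarrow{\,H^0(D\otimes_A\id_M)\,}H^0(X,\mathcal E_2\otimes_AM)\Big),$$
so it suffices to describe the two functors $M\mapsto H^0(X,\mathcal E_i\otimes_AM)$ on $(A\text{-}\Mod)$ and the natural transformation between them induced by $D$.

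The key input is the standard consequence of Grothendieck's theory (EGA III, or Mumford's \emph{Abelian Varieties}): since $q$ is proper and each $\mathcal E_i$ is coherent and flat over $A$, there is a bounded complex $K_i^\bullet$ of finitely generated free $A$-modules, concentrated in degrees $\ge0$, with $H^p(X,\mathcal E_i\otimes_AM)\cong H^p(K_i^\bullet\otimes_AM)$ naturally in $M$, for all $p$. For $p=0$ this gives $H^0(X,\mathcal E_i\otimes_AM)=\ker(K_i^0\otimes_AM\to K_i^1\otimes_AM)$; dualizing the finite free modules we have $K_i^j\otimes_AM\cong\Hom_A\big((K_i^j)^\vee,M\big)$ and the differential becomes precomposition with a map $(K_i^1)^\vee\to(K_i^0)^\vee$, so that $H^0(X,\mathcal E_i\otimes_AM)\cong\Hom_A(Q_i,M)$ naturally, with $Q_i:=\coker\big((K_i^1)^\vee\to(K_i^0)^\vee\big)$ a finitely generated $A$-module.

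Finally, by the Yoneda lemma in $(A\text{-}\Mod)$ the natural transformation $H^0(D\otimes_A\id_{(-)})\colon\Hom_A(Q_1,-)\Rightarrow\Hom_A(Q_2,-)$ is induced by a unique $A$-linear map $\phi\colon Q_2\to Q_1$, and then
$$H^0\big(X,\ker(D\otimes_A\id_M)\big)\cong\ker\big(\phi^*\colon\Hom_A(Q_1,M)\to\Hom_A(Q_2,M)\big)=\Hom_A(\coker\phi,M),$$
the last equality by left exactness of $\Hom_A(-,M)$ applied to $Q_2\xrightarrow{\phi}Q_1\to\coker\phi\to0$. Thus $Q:=\coker\phi$ does the job: it is a quotient of $Q_1$, hence finitely generated, and being a representing object it is determined up to unique isomorphism by the functor, hence by $D$. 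One should note that one does not — and cannot — feed the two-term complex $[\mathcal E_1\xrightarrow{D}\mathcal E_2]$ into Grothendieck's theorem directly, since $D$ is not $\mathcal O_X$-linear; instead the theorem is applied to $\mathcal E_1$ and $\mathcal E_2$ individually and Yoneda absorbs the map. I expect the only genuine work to be the cited base-change input; the point to be careful about is that the $K_i^\bullet$ can be taken in non-negative degrees (so that $H^0$ is an honest kernel, not a subquotient) and that all identifications are natural in $M$ — both are part of the standard statement, obtained by replacing the \v{C}ech complex of a finite affine cover of $X$ by a quasi-isomorphic bounded-below complex of finitely generated free $A$-modules, using flatness of $\mathcal E_i$ over $A$.
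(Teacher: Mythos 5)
Your proof is correct, but it is organized differently from the paper's. Both arguments rest on the same external input, Grothendieck's cohomology-and-base-change machinery (the paper quotes it as Hartshorne III, Prop.\ 12.4; you rederive it from the Grothendieck complex $K^\bullet$, dualizing the finite free/projective terms), namely that $M\mapsto H^0(X,\mathcal E\otimes_AM)$ is $\Hom_A(P,-)$ with $P$ finitely generated. The paper, however, applies this only to $\mathcal E_1$, represents the kernel functor \emph{abstractly} (left exactness via flatness of the $\mathcal E_i$ plus the representability statement of Hartshorne's Remark 12.4.1), and then recovers finite generation of $Q$ by showing that the induced map $P\to Q$ is surjective -- an argument that also yields the auxiliary surjectivity statement (\prettyref{lem:L2711}) which the paper reuses later in \prettyref{prop:P13112}. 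You instead apply the representability result to both $\mathcal E_1$ and $\mathcal E_2$, let Yoneda convert $H^0(D\otimes_A\id_{(-)})$ into $\phi\colon Q_2\to Q_1$, and take $Q=\coker\phi$, so that finite generation and uniqueness are immediate and no general representability criterion for left-exact functors is needed; this is the cleaner route at this spot, since ``left exact $\Rightarrow$ representable'' is false without supplementary hypotheses and the paper's citation therefore carries conditions that must be checked, whereas your construction sidesteps the issue (at the mild cost of invoking the base-change input twice and of the small, correctly flagged, caveat that the degree-zero term of the Grothendieck complex is in general only finitely generated projective/flat rather than free -- harmless for the dualization). Your observation that only $q^{-1}\mathcal O_S$-linearity and functoriality in $M$ of $D\otimes_A\id_M$ are needed, not its differential-operator structure, is also accurate and consistent with the paper's Remark \ref{rem:R29110}.
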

\begin{remark}\mylabel{rem:R29110} Observe that by \prettyref{lem:L1130} for each $A$-module $M$, the $A$-linear map $D\otimes_A\text{Id}_M: \mathcal E_1\otimes_AM\longrightarrow \mathcal E_2\otimes_AM$\, is again a differential operator relative to $A$, so we can apply the technique of Grothendieck to study the kernel of the differential operators on the fibres of the morphism $q$\,.
\end{remark}
\begin{proof} Let $0\longrightarrow M'\longrightarrow M\longrightarrow M''\longrightarrow 0$\, be a short exact sequence of $A$-modules. Since $\mathcal E_i, i=1,2$\, are flat over $A$, we get a short exact sequence of triples 
$$0\longrightarrow \overline{E}\otimes_AM'\longrightarrow \overline{E}\otimes_AM\longrightarrow \overline{E}\otimes_AM''\longrightarrow 0,$$
where $\overline{E}$\, is the triple $\mathcal E_1\stackrel{D}\longrightarrow \mathcal E_2$\,.
Then, taking kernel sheaves is a left exact functor and then, taking $H^0(X,-)$\, is also a left exact functor  and so is the composition.\\
The representability of $H^0(X, \text{ker}(D\otimes_A\text{id}_{-}))$\,  by a functor $\Hom_A(Q,-)$\, follows from \cite{Hartshorne}[chapter III,p.286, Remark 12.4.1]. By standard cohomology and base change, (see \cite{Hartshorne}[chapter III,12, Proposition 12.4]), the functor 
$$(A-\Mod)\longrightarrow (A-\Mod)\quad M\mapsto H^0(X, \mathcal E\otimes_AM)$$ is representable by a functor $\Hom_A(P,-)$\, where $P$\, is an $A$-module of finite type. From the inclusion of functors 
$$H^0(X,\text{ker}(D\otimes_A\text{id}_{(-)}))\hookrightarrow H^0(X, \mathcal E\otimes_A\text{Id}_{(-)})$$
 we get by uniqueness of the representing $A$-modules $Q$ and $P$\,
  a homomorphism of $A$-modules $f:P\longrightarrow Q$\,. I claim that $f$ is surjective.   Let $Q'$\, be the image of $P$ by $f$ in $Q$\,. The  injective homomorphism of $A$-modules
 $\text{Hom}_A(Q,M)\hookrightarrow \text{Hom}_A(P,M)$\, factors as 
 $$\text{Hom}_A(Q,M)\longrightarrow \text{Hom}_A(Q',M)\longrightarrow \text{Hom}_A(P,M).$$
 Thus, the first $A$-linear map $\text{Hom}_A(Q,M)\longrightarrow \text{Hom}_A(Q',M)$\, has to be injective for each $A$-module $M$.\\
  Consider the short exact sequence
 $$0\longrightarrow Q'\longrightarrow Q\longrightarrow Q/Q'\longrightarrow 0.$$  If $Q\neq Q'$\,,put $M=Q/Q'$\, and we get a short exact sequence
 $$0\longrightarrow \text{Hom}_A(Q/Q',Q/Q')\longrightarrow \text{Hom}_A(Q,Q/Q')\longrightarrow \text{Hom}_A(Q',Q/Q').$$
 If $Q/Q'\neq 0$, the first $\Hom$-module is always nonzero, since it containes $\text{id}_{Q/Q'}$\,. But then the second map cannot be injective. Thus it follows $Q=Q'$\, and $P\longrightarrow Q$\, is surjective. Since $P$ is finitely generated, so is $Q$.
 \end{proof}
 From the proof just given it follows the following simple
 \begin{lemma}\mylabel{lem:L2711} Let $A$\, be a noetherian ring , $F_1,F_2: (A-\Mod)\longrightarrow (A-Mod)$\, be two left exact functors and $\phi: F_1\longrightarrow F_2$\, be a natural transformation that is for all $M\in (A-\Mod)$\, injective. Let $Q_1,Q_2$\, be the representing $A$-modules, i.e., 
 $$F_1\cong \Hom_A(Q_1,-)\quad\text{and}\,\, F_2 \cong \Hom_A(Q_2,-).$$
 Then, the homomorphism  $Q_2\longrightarrow Q_1$\, induced by $\phi$\, is surjective.
 \end{lemma}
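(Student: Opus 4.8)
The plan is to isolate the purely categorical content of the argument already given for \prettyref{prop:P77}: in fact neither the left-exactness of the $F_i$ nor the noetherian hypothesis on $A$ is needed, only that both functors are representable and that $\phi$ is pointwise injective.

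First I would invoke the Yoneda lemma. A natural transformation $\phi\colon\Hom_A(Q_1,-)\Rightarrow\Hom_A(Q_2,-)$ is the same datum as a single $A$-linear map $g\colon Q_2\to Q_1$, recovered as $g=\phi_{Q_1}(\id_{Q_1})$, and $\phi_M$ is then precomposition with $g$. Put $Q':=\img(g)\subseteq Q_1$ and factor $g$ as the surjection $Q_2\twoheadrightarrow Q'$ followed by the inclusion $Q'\hookrightarrow Q_1$. Correspondingly, for each $A$-module $M$ the map $\phi_M$ factors as
$$\Hom_A(Q_1,M)\stackrel{r}{\longrightarrow}\Hom_A(Q',M)\stackrel{\iota}{\longrightarrow}\Hom_A(Q_2,M),$$
where $r$ is restriction along $Q'\hookrightarrow Q_1$ and $\iota$ is precomposition with $Q_2\twoheadrightarrow Q'$. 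Since $\phi_M=\iota\circ r$ is injective for every $M$ by hypothesis, the restriction map $r$ is injective for every $M$.

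Finally I would test this on $M=Q_1/Q'$. The canonical projection $\pi\colon Q_1\to Q_1/Q'$ is an element of $\Hom_A(Q_1,Q_1/Q')$ with $r(\pi)=\pi|_{Q'}=0$, so injectivity of $r$ gives $\pi=0$, i.e. $Q_1/Q'=0$ and $Q'=Q_1$; equivalently one applies $\Hom_A(-,Q_1/Q')$ to $0\to Q'\to Q_1\to Q_1/Q'\to 0$ and observes that $Q_1/Q'\neq 0$ would force $\Hom_A(Q_1/Q',Q_1/Q')\neq 0$ (it contains $\id_{Q_1/Q'}$), contradicting injectivity of $r$. Hence $g\colon Q_2\to Q_1$ is surjective, which is the assertion. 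There is essentially no obstacle here: the only points requiring care are the variance in Yoneda's lemma and the factorization of $g$ through its image, which is precisely what converts pointwise injectivity of $\phi$ into injectivity of the restriction map $r$.
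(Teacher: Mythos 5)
Your proof is correct and follows essentially the same route as the paper, which extracts this lemma from the argument inside the proof of \prettyref{prop:P77}: obtain $g\colon Q_2\to Q_1$ from the natural transformation, factor it through its image $Q'$, and test injectivity of the restriction map on $M=Q_1/Q'$ to force $Q'=Q_1$. Your explicit appeal to Yoneda and the remark that left-exactness and the noetherian hypothesis are not needed are accurate but do not change the substance of the argument.
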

 \begin{proposition}\mylabel{prop:P117} ( Affine base change property of the kernel)\\
   Let $A\longrightarrow B$\, be a homomorphism of noetherian rings and $X_A\longrightarrow \Spec A$\, be a proper scheme of finite type over $A$, $D: \mathcal E_1\longrightarrow \mathcal E_2$\, be a differential operator relative to $A$\, with $\mathcal E_1, \mathcal E_2$\, coherent sheaves, flat over $A$. Let $Q$\, be the $A$-module, representing the functor 
   $$ H^0(X_A, \ker(D\otimes_A\Id_{-})): A-\Mod\longrightarrow A\Mod,\,\, M\mapsto H^0(X_A, \ker(D\otimes_A\Id_M)).$$
    Let $X_B\longrightarrow \Spec B, \mathcal E_i\otimes_AB,i=1,2$\, and $D_B: \mathcal E_{1,B}\longrightarrow \mathcal E_{2,B}$\, be the based changed data.\\
 Then, the $B$-module $Q_B$, representing the functor 
 $$ H^0(X_B, \ker(D_B\otimes_B\Id_{-})): B-\Mod\longrightarrow B-\Mod,\,\, M\mapsto H^0(X_B, \ker(D_B\otimes_B\Id_M))$$
  is isomorphic to $Q\otimes_AB$\,.
 \end{proposition}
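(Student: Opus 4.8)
The plan is to reduce the statement to a natural isomorphism of functors on $B$-modules and then invoke Yoneda. Since properness, finite type and flatness over the base are all stable under base change, Proposition \prettyref{prop:P77} applies verbatim to the data $X_B\longrightarrow\Spec B$, $\mathcal E_{1,B},\mathcal E_{2,B}$ and $D_B$; thus $Q_B$ exists and represents $N\mapsto H^0(X_B,\ker(D_B\otimes_B\Id_N))$ on $(B\text{-}\Mod)$. (That $D_B$, the base change of $D$, is again a differential operator of order $\le N$, now relative to $B$, is exactly what \prettyref{lem:L2811} guarantees, via the correspondence between differential operators and $\mathcal O_X$-linear maps out of jet modules; by \prettyref{lem:L1130}/\prettyref{rem:R30111}, $D_B\otimes_B\Id_N$ is then a differential operator for every $B$-module $N$.) On the other hand, Hom--tensor adjunction for extension/restriction of scalars along $A\longrightarrow B$ gives, for every $B$-module $N$, a natural isomorphism $\Hom_B(Q\otimes_AB,N)\cong\Hom_A(Q,N)=H^0(X_A,\ker(D\otimes_A\Id_N))$, where $N$ on the right is regarded as an $A$-module via $A\longrightarrow B$. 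So it suffices to produce, naturally in the $B$-module $N$, an isomorphism
$$H^0\bigl(X_B,\ker(D_B\otimes_B\Id_N)\bigr)\;\cong\;H^0\bigl(X_A,\ker(D\otimes_A\Id_N)\bigr).$$

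First I would introduce the projection $\pi: X_B=X_A\times_{\Spec A}\Spec B\longrightarrow X_A$, which is affine (base change of $\Spec B\longrightarrow\Spec A$), so that $\pi_*$ is exact on quasicoherent sheaves. The technical core is the sheaf identity on $X_A$
$$\pi_*\bigl(\mathcal E_{i,B}\otimes_B N\bigr)\;\cong\;\mathcal E_i\otimes_A N\qquad(i=1,2),$$
with $N$ on the right viewed as an $A$-module: over an affine open $\Spec R\subset X_A$ one has $\pi^{-1}(\Spec R)=\Spec(R\otimes_AB)$, and a chain of canonical associativity isomorphisms identifies the module of $\mathcal E_{i,B}\otimes_B N$ over $R\otimes_A B$ with $E_i\otimes_A N$, whose underlying $R$-module is the module of $\mathcal E_i\otimes_A N$ on $\Spec R$; these glue. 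The same local description shows that under this identification $D_B\otimes_B\Id_N$ corresponds to $D\otimes_A\Id_N$ (locally $D_B$ is $D|_{\Spec R}\otimes_A\Id_B$, which is the meaning of the ``base-changed data'', and tensoring further with $N$ over $B$ gives $D|_{\Spec R}\otimes_A\Id_N$). Hence $\pi_*(D_B\otimes_B\Id_N)=D\otimes_A\Id_N$ as a morphism of sheaves of $(\pi^{-1}\mathcal O_{\Spec A})$-modules on $X_A$.

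Now the conclusion is formal. Since $\pi_*$ is (left) exact it commutes with kernels, so
$$\pi_*\ker(D_B\otimes_B\Id_N)=\ker\bigl(\pi_*(D_B\otimes_B\Id_N)\bigr)=\ker(D\otimes_A\Id_N),$$
and since $H^0(X_B,\mathcal G)=H^0(X_A,\pi_*\mathcal G)$ for every quasicoherent $\mathcal G$ (true for any morphism), we obtain the required natural isomorphism $H^0(X_B,\ker(D_B\otimes_B\Id_N))\cong H^0(X_A,\ker(D\otimes_A\Id_N))$. Combined with the two representability statements, this says $\Hom_B(Q_B,N)\cong\Hom_B(Q\otimes_AB,N)$ naturally in all $B$-modules $N$, so Yoneda gives $Q_B\cong Q\otimes_AB$.

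The only real obstacle is the bookkeeping behind the identity $\pi_*(\mathcal E_{i,B}\otimes_B N)\cong\mathcal E_i\otimes_A N$ and its compatibility with the operator: one must keep straight that ``$\otimes_B N$'' on $X_B$ means tensoring with the pullback of $\widetilde N$ along $X_B\longrightarrow\Spec B$, and check (affine-locally, or via the projection formula for the affine morphism $\pi$) that applying $\pi_*$ turns this into tensoring with the pullback of $\widetilde N$ --- now as an $A$-module --- along $X_A\longrightarrow\Spec A$, in a way compatible with $D$. Everything after that (left exactness of $\pi_*$, of $\ker$ and of $H^0$; Hom--tensor adjunction; Yoneda) is routine.
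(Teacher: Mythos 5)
Your proposal is correct and follows essentially the same route as the paper's proof: representability of $Q_B$ via \prettyref{prop:P77} for the base-changed data, identification of $H^0(X_B,\ker(D_B\otimes_B\Id_N))$ with $H^0(X_A,\ker(D\otimes_A\Id_N))$ by pushing forward along the affine projection and using left exactness, then Hom--tensor adjunction and uniqueness of the representing module. Your treatment merely spells out more explicitly the local bookkeeping behind $\pi_*(\mathcal E_{i,B}\otimes_BN)\cong\mathcal E_i\otimes_AN$ and its compatibility with $D$, which the paper passes over in silence.
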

 \begin{remark}\mylabel{rem:R291100} Observe, that we do not claim that the push-forward of the kernel commutes with base change. We only claim that the representing $A$-module $Q$ commutes with base change.
 \end{remark}

 \begin{proof} First, the functor in question is representable by some $B$-module $Q_B$\, since the base changed data fullfill all requirements of \prettyref{prop:P77}. Now let $M$ be a $B$-module.  Let $q: X_B\longrightarrow X_A$\, be the affine projection. Then
 \begin{gather*}H^0(X_B, \ker(D_B\otimes_B\Id_M))=H^0(X_B ,\ker (D\otimes_AB\otimes\Id_M))=\\
 H^0(X_A, q_*(\ker(D\otimes_AB\otimes_B\Id_M))= H^0(X_A, \ker(D\otimes_AM)),
 \end{gather*}
 $M$\, viewed as an $A$-module via the map $A\longrightarrow B$\,. We only have to observe that on $X_B$\, there is an exact sequence of sheaves of $B$-modules
 $$0\longrightarrow \ker(D_B\otimes_B\text{Id}_M)\longrightarrow \mathcal E_{1,B}\otimes_BM\longrightarrow \mathcal E_{2,B}\otimes_BM$$ and that $q_*$\, is a left exact functor. Hence, we get
 $$q_*\ker(D_B\otimes_B\text{Id}_M)=\ker(q_*(D_B\otimes_B\text{Id}_M))=\ker(D\otimes_A\text{Id}_M).$$ Thus,
 $$H^0(X_B, \ker(D_B\otimes_B\otimes\Id_M))\cong \Hom_A(Q,M)\cong \Hom_B(Q\otimes_AB,M)$$
 since any homomorphism of an $A$-module $Q$ into a $B$-module $M$ factors uniquely through $Q\otimes_AB$\,.\\
 The assertion follows by the uniqueness of the representing module $Q_B$\,.
 \end{proof}
  \begin{corollary}\mylabel{cor:C30115} Let $q: X\longrightarrow S$\, be a  proper morphism of finite type of noetherian schemes and $D: \mathcal E_1\longrightarrow \mathcal E_2$\, be a differential operator between coherent sheaves that are flat over $S$. Then, there exists a coherent sheaf $\mathcal Q$\, on $S$ such that for all quasicoherent $\mathcal O_S$-modules $\mathcal F$\, we have a canonical functorial isomorphism of sheaves on $S$
 $$\phi(\mathcal F): q_*(\ker(D\otimes_{\mathcal O_S}\text{Id}_{\mathcal F}))\stackrel{\cong}\longrightarrow \Hom_{\mathcal O_S}(\mathcal Q,\mathcal F).$$
 \end{corollary}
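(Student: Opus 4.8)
The plan is to build $\mathcal Q$ by gluing the affine representing modules of \prettyref{prop:P77}, and then to verify the asserted isomorphism locally on $S$. Fix an affine open cover $S=\bigcup_i\Spec A_i$. Since $q$ is proper, each $q^{-1}(\Spec A_i)\to\Spec A_i$ is proper of finite type, $\mathcal E_1,\mathcal E_2$ restrict to coherent sheaves flat over $A_i$, and $D$ restricts to a differential operator $D_i$ relative to $A_i$ (restriction to the preimage of an open of the base preserves differential operators, by the localization property of jet modules). Hence \prettyref{prop:P77} yields a finitely generated $A_i$-module $Q_i$ together with a natural isomorphism $H^0(q^{-1}(\Spec A_i),\ker(D_i\otimes_{A_i}\id_M))\cong\Hom_{A_i}(Q_i,M)$ for $M$ an $A_i$-module. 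Moreover, by \prettyref{prop:P117} applied to a localization $A_i\to (A_i)_f$, the module representing the corresponding functor over $(A_i)_f$ is $Q_i\otimes_{A_i}(A_i)_f=(Q_i)_f$, and this identification is induced by the affine projection $q^{-1}(\Spec(A_i)_f)\to q^{-1}(\Spec A_i)$, so it is compatible with restriction maps.

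Next I would glue the $\widetilde{Q_i}$. On an overlap $\Spec A_i\cap\Spec A_j$, covered by opens distinguished in both charts, the quasicoherent sheaves $\widetilde{Q_i}$ and $\widetilde{Q_j}$ both represent the functor $\mathcal G\mapsto q_*\ker(D\otimes\id_{\mathcal G})$ restricted to that overlap (the previous paragraph computes this functor on distinguished opens), so uniqueness of a representing object gives canonical isomorphisms $\widetilde{Q_i}|_{\Spec A_i\cap\Spec A_j}\cong\widetilde{Q_j}|_{\Spec A_i\cap\Spec A_j}$, and these satisfy the cocycle condition on triple overlaps, again by uniqueness. Hence the $\widetilde{Q_i}$ patch to a coherent sheaf $\mathcal Q$ on $S$ (coherent since each $Q_i$ is a finitely generated module over the noetherian ring $A_i$).

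Now fix a quasicoherent $\mathcal O_S$-module $\mathcal F$ and put $M_i=\Gamma(\Spec A_i,\mathcal F)$. Restricting $D\otimes_{\mathcal O_S}\id_{\mathcal F}$ to $q^{-1}(\Spec A_i)$ gives $D_i\otimes_{A_i}\id_{M_i}$, so $\Gamma(\Spec A_i,q_*\ker(D\otimes\id_{\mathcal F}))=H^0(q^{-1}(\Spec A_i),\ker(D_i\otimes_{A_i}\id_{M_i}))=\Hom_{A_i}(Q_i,M_i)$, and the same holds after localizing at any $f\in A_i$, compatibly with restriction maps — this is precisely \prettyref{prop:P117} with $B=(A_i)_f$, whose proof identifies the two $H^0$'s via the affine projection. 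Since $Q_i$ is finitely presented ($A_i$ noetherian), $\Hom_{A_i}(Q_i,(M_i)_f)=\Hom_{A_i}(Q_i,M_i)_f$, so the presheaf $\Spec(A_i)_f\mapsto\Hom_{A_i}(Q_i,M_i)_f$ on the distinguished opens of $\Spec A_i$ is exactly the restriction of the quasicoherent sheaf $\sheafHom_{\mathcal O_S}(\mathcal Q,\mathcal F)$. Thus $q_*\ker(D\otimes\id_{\mathcal F})|_{\Spec A_i}\cong\sheafHom_{\mathcal O_S}(\mathcal Q,\mathcal F)|_{\Spec A_i}$, and these isomorphisms are compatible with the gluing data for $\mathcal Q$ by naturality in \prettyref{prop:P77}, so they assemble to the desired global isomorphism $\phi(\mathcal F)$ — which in particular exhibits $q_*\ker(D\otimes\id_{\mathcal F})$ as quasicoherent, a point not visible on $X$ since $\ker(D\otimes\id_{\mathcal F})$ there is only a subsheaf of $q^{-1}\mathcal O_S$-modules. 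Functoriality of $\phi$ in $\mathcal F$ follows from the functoriality statements in \prettyref{prop:P77} and \prettyref{prop:P117}.

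The one point demanding care — and the \emph{crux} of the argument — is precisely this compatibility with restriction: one must know that the restriction map $\Gamma(\Spec A_i,q_*\ker(D\otimes\id_{\mathcal F}))\to\Gamma(\Spec(A_i)_f,q_*\ker(D\otimes\id_{\mathcal F}))$ is intertwined, via \prettyref{prop:P77}, with the localization map $\Hom_{A_i}(Q_i,M_i)\to\Hom_{(A_i)_f}((Q_i)_f,(M_i)_f)$. This is exactly what the proof of \prettyref{prop:P117} establishes, the isomorphism there being induced by the affine projection $q\colon X_B\to X_A$; once that is read off, the gluing of $\mathcal Q$, the construction of $\phi$, and its cocycle compatibility are all formal consequences of the uniqueness of representing objects, and no further geometric input is needed.
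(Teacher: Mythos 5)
Your proposal is correct and follows essentially the same route as the paper: glue the affine representing modules $Q_i$ from \prettyref{prop:P77} using the base-change statement \prettyref{prop:P117} together with Yoneda-type uniqueness for the cocycle condition, then verify $\phi(\mathcal F)$ locally by reducing to the affine case. The only difference is one of care, not of method — you work over distinguished opens $(A_i)_f$ and spell out the compatibility of the representing isomorphisms with restriction/localization, whereas the paper applies \prettyref{prop:P117} directly to the overlaps $\Spec A_i\cap\Spec A_j$ and leaves that compatibility implicit.
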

 \begin{proof} Let $S=\bigcup_{i=1}^n\Spec A_i$\, and put $\Spec A_{ij}=\Spec A_i\cap \Spec A_j$\,. Letting $A=A_i$\, and $B=A_{ij}$\, , we know by the previous proposition that there are isomorphisms $\phi_{ij}: Q_{ij}\cong Q_i\otimes_{A_i}A_{ij}$, where $Q_i, Q_{ij}$\, are the representing $A_i, A_{ij}$-modules for the morphism $q$\, restricted to $\Spec A_i, \Spec A_{ij}$\,\,, respetively. Furthermore, these isomorphisms are unique (basically by the Yoneda lemma). Thus, the isomorphisms $\phi_{ij}$\, satisfy the cocycle condition and the finitely generated $A_i$-modules $Q_i$\, glue to a coherent $\mathcal O_S$-module $\mathcal Q$\,. Now, that for each coherent $\mathcal O_S$-module $\mathcal F$\,, the  homomorphism $\phi_{\mathcal F}$\, is an isomorphism  is a local question and this is clear by the affine case $S=\Spec A$\,.
 \end{proof}
 We have the following important
 \begin{proposition}\mylabel{prop:P29113} (Upper semicontinuity of the kernel dimension)\\
 Let $q: X\longrightarrow S$\, be a flat, proper morphism of finite type of noetherian schemes, $\mathcal E_1,\mathcal E_2$\, be coherent sheaves on $X$,  flat over $S$ and $D: \mathcal E_1\longrightarrow \mathcal E_2$\, be a differential operator relative to $S$ of some order $N\in \mathbb N$\,. Then, the function
 $$S\mapsto \mathbb N_0,\,\, s\in S\mapsto h^0(X_s, \text{ker}(D_s))$$
 where $D_s=D\otimes_{\mathcal O_S}\kappa(s)$\,
 is upper semicontinuous on $S$.
 \end{proposition}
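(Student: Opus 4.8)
The plan is to deduce the upper semicontinuity from the classical upper semicontinuity of fibre dimensions of a coherent sheaf, applied to the representing sheaf $\mathcal Q$ furnished by \prettyref{cor:C30115}. Since upper semicontinuity of a function on $S$ can be checked on the members of an affine open cover, I would first replace $S$ by an affine open $\Spec A$ with $A$ noetherian; then by \prettyref{prop:P77} (equivalently \prettyref{cor:C30115}) there is a finitely generated $A$-module $Q$, unique up to canonical isomorphism, together with a functorial identification $H^0(X,\ker(D\otimes_A\text{Id}_M))\cong\Hom_A(Q,M)$ for $M\in(A\text{-}\Mod)$.

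Next I would compute the fibre $h^0(X_s,\ker(D_s))$ in terms of $Q$. Fix $s\in\Spec A$ corresponding to a prime $\mathfrak p\subset A$, so $\kappa(s)=\mathrm{Frac}(A/\mathfrak p)$, and apply the affine base change property \prettyref{prop:P117} to the ring homomorphism $A\to\kappa(s)$: this is legitimate because $\kappa(s)$ is noetherian, the base-changed sheaves $\mathcal E_{i,s}$ remain coherent and flat over $\kappa(s)$, and $D_s=D\otimes_A\kappa(s)$ is again a differential operator relative to $\kappa(s)$ by \prettyref{rem:R30111}. That proposition then tells us that the functor $N\mapsto H^0(X_s,\ker(D_s\otimes_{\kappa(s)}\text{Id}_N))$ on $\kappa(s)$-vector spaces is represented by $Q\otimes_A\kappa(s)$. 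Taking $N=\kappa(s)$ yields
$$H^0(X_s,\ker(D_s))\cong\Hom_{\kappa(s)}\bigl(Q\otimes_A\kappa(s),\,\kappa(s)\bigr),$$
and since the right-hand side is the $\kappa(s)$-linear dual of a finite-dimensional vector space we obtain $h^0(X_s,\ker(D_s))=\dim_{\kappa(s)}\bigl(Q\otimes_A\kappa(s)\bigr)=\dim_{\kappa(s)}Q_{\mathfrak p}/\mathfrak pQ_{\mathfrak p}$.

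Finally I would invoke the standard fact that for a finitely generated module $Q$ over a noetherian ring $A$ the function $\mathfrak p\mapsto\dim_{\kappa(\mathfrak p)}Q\otimes_A\kappa(\mathfrak p)$, i.e.\ the minimal number of generators of $Q_{\mathfrak p}$, is upper semicontinuous on $\Spec A$: by Nakayama, finitely many elements of $Q$ whose images generate $Q_{\mathfrak p}$ already generate $Q_{\mathfrak q}$ for all $\mathfrak q$ in a distinguished open neighbourhood of $\mathfrak p$. Combined with the previous step this gives the assertion. I expect the only genuine subtlety to be the base-change step: one must make sure that \prettyref{prop:P117} really applies with the (possibly non-finite-type) residue field $\kappa(s)$ in place of $B$, and that the representing-module isomorphism there specializes compatibly to the point $s$; everything else is either a formal consequence of the results already established or classical commutative algebra.
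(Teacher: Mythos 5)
Your proposal is correct and takes essentially the same route as the paper: reduce to affine $S=\Spec A$, represent the kernel functor by the finitely generated $A$-module $Q$ of \prettyref{prop:P77}, identify $h^0(X_s,\ker(D_s))$ with $\dim_{\kappa(s)}\bigl(Q\otimes_A\kappa(s)\bigr)$, and conclude by upper semicontinuity of the fibre dimension of a coherent module. The only (harmless) difference is in how the fibre formula is obtained: you route through the base change result \prettyref{prop:P117} applied to $A\to\kappa(s)$ and then dualize over the field, while the paper computes $\Hom_A(Q,\kappa(s))$ directly by observing that every homomorphism factors through $Q_{\mathfrak{p}}/\mathfrak{p}Q_{\mathfrak{p}}$.
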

 \begin{proof}  First, this is a question local on $S$\,, so we may assume that $S=\Spec A$\, for some noetherian ring $A$. Then, the statement  follows by the same arguement as in \cite{Hartshorne}[chapter III, p.288 Theorem 12.8, case i=0], since the representing $A$-module $Q$ is finitely generated. One can also show this directly, since if $s\in S$\, is a scheme point, we have
\begin{gather*} H^0(X_s, \ker(D_s))= \Hom_A(Q,\kappa(s))= \Hom_A(Q, \mathcal O_{S,s}/\mathfrak{m}_{S,s})=\\
\Hom_A(Q, A_{\mathfrak{p}}/\mathfrak{p}A_{\mathfrak{p}}),
\end{gather*}
 where the prime ideal $\mathfrak{p}\in \Spec A$\, corresponds to $s\in S$\,. But each homomorphism $\phi$\, from a finitely generated  $A$-module $Q$ to an $A_{\mathfrak{p}}$-module $N$\, factors through $Q_{\mathfrak{p}}$\,  (since $Q\otimes_AA_{\mathfrak{p}}=Q_{\mathfrak p}$\,) and, then $\mathfrak{p}Q_{\mathfrak{p}}$\,
  has to go to zero. Thus, 
  $$H^0(X_s, \ker(D_s))=\Hom_A(Q,\kappa(s)) =\check{(Q_{\mathfrak{p}}/\mathfrak{p}\cdot Q_{\mathfrak{p}})},$$ where the dual is taken over $\kappa(s)$\, 
  and it is well known that the fibre dimension of a coherent $\mathcal O_S$-module is an upper semicontinuous function.
 \end{proof}    
 From this, we can derive the following local-to global principle.
 \begin{corollary} Let $q: X\longrightarrow S$\, be a morphism  of finite type of noetherian schemes  with $S=\Spec A$\, be  integral and  affine and $\mathcal E_1,\mathcal E_2$\, be coherent sheaves on $X$ flat over $S$ and $D:\mathcal E_1\longrightarrow \mathcal E_2$\, be a differential operator over $S$. If for a Zariski  dense set of points $s\in S$, the kernel of $D_s: \mathcal E_{1,s}\longrightarrow \mathcal E_{2,s}$\, on $X_s$\, is nonempty, then the kernel of $D$\, is nonempty on $X$.\\
 \end{corollary}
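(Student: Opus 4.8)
The plan is to reduce the statement to a property of the single finitely generated $A$-module $Q$ attached to $D$. Since $S=\Spec A$ is affine and (by the standing hypotheses of this section) $q$ is proper, \prettyref{prop:P77} produces a finitely generated $A$-module $Q$, unique up to isomorphism, with a natural identification $H^0(X,\ker(D\otimes_A\Id_M))\cong\Hom_A(Q,M)$ for every $A$-module $M$. Taking $M=A$ yields $H^0(X,\ker D)\cong\Hom_A(Q,A)$, so it suffices to prove that $\Hom_A(Q,A)\neq 0$; here I read ``the kernel is nonempty'' as ``the kernel contains a nonzero element'', i.e.\ $H^0(-,\ker(-))\neq 0$.

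Next I would pin down the locus occurring in the hypothesis. By \prettyref{prop:P29113} and the computation in its proof, for a point $s\in S$ corresponding to $\mathfrak p\in\Spec A$ one has $h^0(X_s,\ker D_s)=\dim_{\kappa(s)}\bigl(Q_{\mathfrak p}/\mathfrak p Q_{\mathfrak p}\bigr)$. By Nakayama's lemma this is positive precisely when $Q_{\mathfrak p}\neq 0$, so the set of $s\in S$ for which $D_s$ has a nonzero solution on $X_s$ is exactly $\support(Q)$. As $Q$ is finitely generated over the noetherian ring $A$, $\support(Q)=V(\ann Q)$ is closed in $S$. By hypothesis it is Zariski dense, and $S$ is irreducible because $A$ is a domain; hence $\support(Q)=S$. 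In particular the generic point of $S$ lies in $\support(Q)$, that is $Q\otimes_AK\neq 0$, where $K$ denotes the fraction field of $A$.

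Finally I would pass to the generic fibre to conclude. Because $A$ is noetherian, $Q$ is finitely presented, so the formation of $\Hom_A(Q,-)$ commutes with the flat base change $A\to K$: $\Hom_A(Q,A)\otimes_AK\cong\Hom_K(Q\otimes_AK,K)$. The right-hand side is the $K$-linear dual of the nonzero finite-dimensional $K$-vector space $Q\otimes_AK$, hence is nonzero; therefore $\Hom_A(Q,A)\neq 0$, and consequently $H^0(X,\ker D)\neq 0$, i.e.\ $D$ admits a nonzero global solution over $X$.

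The one genuinely delicate point is the bookkeeping of hypotheses: the argument uses properness of $q$ in an essential way, both for the existence of the finitely generated representing module $Q$ via \prettyref{prop:P77} and for the semicontinuity statement \prettyref{prop:P29113}, so the ``finite type'' in the statement is to be understood together with the standing assumption of this section that $q$ is proper. Granting that, the remaining ingredients---closedness of the support of a coherent module, irreducibility of $\Spec$ of a domain, and compatibility of $\Hom$ out of a finitely presented module with localization---are all standard, and no further input about differential operators beyond \prettyref{prop:P77} and \prettyref{prop:P29113} is needed.
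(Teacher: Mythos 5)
Your proposal is correct and follows essentially the same route as the paper: represent $M\mapsto H^0(X,\ker(D\otimes_A\Id_M))$ by a finitely generated $A$-module $Q$ via \prettyref{prop:P77}, use the fibrewise identification $h^0(X_s,\ker D_s)=\dim_{\kappa(s)}(Q_{\mathfrak p}/\mathfrak p Q_{\mathfrak p})$ from \prettyref{prop:P29113} to see that $Q$ has positive rank, and conclude $\Hom_A(Q,A)=H^0(X,\ker D)\neq 0$. The only differences are cosmetic and in your favour: you make explicit (via Nakayama and closedness of $\support(Q)$) why density forces $Q$ not to be torsion, which the paper merely asserts, and you get the nonvanishing of the dual by flat base change to the fraction field instead of passing to $Q/Q^{\mathrm{tors}}$; you also correctly flag that properness of $q$ from the section's standing hypotheses is needed for \prettyref{prop:P77} and \prettyref{prop:P29113}.
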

 \begin{proof}  The question is local on $S$, so let $X=\Spec A$\,. The finitely generated $A$-module $Q$ with 
 $$\text{Hom}_A(Q,M)\cong H^0(X,\text{ker}(D\otimes_A\Id_M))$$ can in this case not be a torsion module.  Let $Q':=Q/Q^{tors}$\,. Then 
 $$\text{Hom}_A(Q',A)=\text{Hom}_A(Q,A)= H^0(X,\text{ker}(D))$$ is nonempty because the dual of a torsion free module on an integral scheme is always nonzero torsion free.
 \end{proof} 
 \begin{corollary}\mylabel{cor:C1311}(Specialization Property) Let $(R,\mathfrak{m}, k)$\, be a local ring which is an integral domain with quotient  field $K=K(R)$\,. Let $X_R\longrightarrow \Spec R$\, be a proper morphism of finite type, $\mathcal E_1,\mathcal E_2$\, be coherent sheaves on $X_R$\,, flat over $R$  and $D_R: \mathcal E_R\longrightarrow \mathcal E_R$\, be a differential operator relative to $R$. If $D_K: \mathcal E_K\longrightarrow \mathcal E_K$\, possesses a nonzero solution, then so does $D_k:\mathcal E_k\longrightarrow \mathcal E_k$\,.
 \end{corollary}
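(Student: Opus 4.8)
The plan is to read everything off the finitely generated representing module supplied by \prettyref{prop:P77} together with the base change statement \prettyref{prop:P117}. First I would apply \prettyref{prop:P77} to the proper $R$-scheme $X_R$ and the differential operator $D_R$ (whose source and target are coherent and flat over $R$): this produces a finitely generated $R$-module $Q$ with a functorial isomorphism $H^0(X_R,\ker(D_R\otimes_R\Id_M))\cong\Hom_R(Q,M)$ for all $R$-modules $M$. Next I would invoke \prettyref{prop:P117} for the two ring maps $R\to K$ and $R\to k$: it identifies the module representing the corresponding kernel functor over $K$ with $Q\otimes_R K$, and the one over $k$ with $Q\otimes_R k=Q/\mathfrak{m}Q$.

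Now I would translate the two occurrences of ``has a nonzero solution''. A nonzero global solution of $D_K$ on $X_K$ is exactly a nonzero element of $H^0(X_K,\ker D_K)\cong\Hom_K(Q\otimes_RK,K)$, so the hypothesis says $\Hom_K(Q\otimes_RK,K)\ne0$; since $Q\otimes_RK$ is a vector space over the field $K$, this forces $Q\otimes_RK\ne0$, and a fortiori $Q\ne0$. Because $R$ is local and $Q$ is a nonzero finitely generated $R$-module, Nakayama's lemma gives $Q/\mathfrak{m}Q\ne0$. Hence $H^0(X_k,\ker D_k)\cong\Hom_k(Q/\mathfrak{m}Q,k)$ is a nonzero $k$-vector space, i.e. $D_k$ admits a nonzero solution, which is the claim.

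There is essentially no hard step here; the only points that require care are bookkeeping ones. One should make sure that the flatness of $\mathcal E_1,\mathcal E_2$ over $R$ is really in force so that \prettyref{prop:P77} and \prettyref{prop:P117} apply (properness of $q$ is used as well, but flatness of $X_R$ itself is not needed), and that ``nonzero solution'' is being read as ``$Q\otimes_R(\text{residue field})\ne0$'' rather than as a statement about the size of a $\Hom$ out of $Q$. Alternatively, one can bypass the module $Q$ and argue directly by upper semicontinuity: by \prettyref{prop:P29113} the function $s\mapsto h^0(X_s,\ker D_s)$ is upper semicontinuous on $\Spec R$, the generic point (residue field $K$) lies in the closed set $\{\,s: h^0(X_s,\ker D_s)\ge1\,\}$ by hypothesis, and since $R$ is a domain the closure of the generic point is all of $\Spec R$; in particular the closed point (residue field $k$) satisfies $h^0\ge1$, giving the desired nonzero solution of $D_k$.
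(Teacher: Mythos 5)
Your main argument is essentially the paper's own proof: both read the hypothesis through the finitely generated representing module $Q$ of \prettyref{prop:P77}, deduce from the nonzero solution over $K$ that $Q$ is not torsion (hence nonzero), and conclude via Nakayama that $\Hom_R(Q,k)\cong\Hom_k(Q/\mathfrak{m}Q,k)\neq 0$, i.e.\ $D_k$ has a nonzero solution; your explicit appeal to \prettyref{prop:P117} just makes precise an identification the paper uses implicitly. The alternative semicontinuity route you sketch is a reasonable extra remark (modulo the flatness hypothesis of \prettyref{prop:P29113}), but the core argument coincides with the paper's.
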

 \begin{proof} We know, that the functor 
 $$H^0(X_R,\ker(D\otimes \text{id}_M)): R-\Mod\longrightarrow R-\Mod$$ is naturally equivalent to the functor 
 $$\Hom_R(Q,-): R-\Mod \longrightarrow R-\Mod$$ for some finitely generated $R$-module $Q$.
 Putting $M=K(R),$\, we know that $\Hom_R(Q, K(A))$\, is nonempty which implies that $Q$ cannot be a torsion module and has positive rank. Putting $M=R/\mathfrak{m}$\, we get $H^0(X_k,\ker D_k)\cong \Hom_R(Q, R/\mathfrak{m})$\, which is nonzero since $Q/\mathfrak{m}Q$\, is a nonzero $k$-vector space.
 \end{proof}
 \begin{proposition}\mylabel{prop:P13112} Let $A$ be a  noetherian ring and $\mathbb A_A^n\longrightarrow \Spec A$\, be affine relative $n$-space over $\Spec A$\,. Let $D: A[\underline{x}]^{\oplus r}\longrightarrow A[\underline{x}]^{\oplus r}$\, be a differential operator relative to $A$. Then the function
 $$\phi: \Spec A\longrightarrow \mathbb N_0,\,\, s\in \Spec A\mapsto \dim_{\kappa(s)}\ker D_s$$
 is upper semicontinous with respect to the countable Zariski-topology on $\Spec A$\, which is the topology, where closed sets are countable unions of Zariski-closed subsets of $\Spec A$\,.
 \end{proposition}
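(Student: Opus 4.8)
The obstruction to applying the machinery of the previous section directly is that $\mathbb A^n_A\longrightarrow\Spec A$ is affine but not proper, so \prettyref{prop:P29113} cannot be invoked for $D$ itself. The plan is to reduce to the trivial (hence proper) base by stratifying source and target according to polynomial degree, paying for this reduction with a countable union — which is exactly why the countable Zariski topology appears in the statement.

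First I would record that, since $D$ corresponds to an $A[\underline x]$-linear map out of the finite free $A[\underline x]$-module $\mathcal J^N(A[\underline x]^{\oplus r}/A)$, it has the classical shape $D=(D_{ij})$ with $D_{ij}=\sum_{|I|\le N}a_{I,i,j}(\underline x)\,\partial^{|I|}/\partial\underline x^{I}$ and only finitely many coefficients $a_{I,i,j}\in A[\underline x]$; let $e$ be the maximum of their degrees. Differentiation does not raise degree and multiplication by $a_{I,i,j}$ raises it by at most $e$, so for every $d\in\mathbb N_0$ the operator $D$ restricts to an $A$-linear map of \emph{finite free} $A$-modules $D_{\le d}\colon A[\underline x]_{\le d}^{\oplus r}\longrightarrow A[\underline x]_{\le d+e}^{\oplus r}$, where $A[\underline x]_{\le d}$ is the span of the monomials of degree $\le d$. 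Since that span is $A$-free on those monomials, forming $D_{\le d}$ commutes with the base changes $A\to\kappa(s)$, i.e.\ $(D_{\le d})_s=(D_s)_{\le d}$; and because every polynomial vector has finite degree, $\ker D_s=\bigcup_{d\ge0}\ker\bigl((D_{\le d})_s\bigr)$ is an increasing union of finite-dimensional spaces, so $\phi(s)=\sup_{d\ge0}\phi_d(s)$ with $\phi_d(s):=\dim_{\kappa(s)}\ker\bigl((D_{\le d})_s\bigr)$.

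Next I would show that each $\phi_d$ is upper semicontinuous for the ordinary Zariski topology, by the classical minors argument: writing $D_{\le d}$ as a matrix $M_d$ over $A$ with $\rho:=r\binom{n+d}{n}$ columns, one has $\phi_d(s)=\rho-\operatorname{rank}\bigl((M_d)_s\bigr)$, and $\{s:\operatorname{rank}((M_d)_s)\ge k\}$ is the union of the principal opens cut out by the $k\times k$ minors of $M_d$, so $\{s:\phi_d(s)\ge c\}$ is Zariski-closed. (Equivalently, this is \prettyref{prop:P29113} applied to the proper morphism $\id\colon\Spec A\to\Spec A$ with the coherent sheaves $\widetilde{A[\underline x]_{\le d}^{\oplus r}}$, $\widetilde{A[\underline x]_{\le d+e}^{\oplus r}}$ and the $\mathcal O_{\Spec A}$-linear, hence order $0$, differential operator $D_{\le d}$.) Then for any $c\in\mathbb N_0$ we obtain $\{s:\phi(s)\ge c\}=\bigcup_{d\ge0}\{s:\phi_d(s)\ge c\}$, a countable union of Zariski-closed subsets of $\Spec A$, i.e.\ closed in the countable Zariski topology; as $c$ was arbitrary this is the assertion.

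I do not expect a real obstacle here: once the degree-stratification is in place, everything is elementary linear algebra (lower semicontinuity of matrix rank) plus the bookkeeping $\phi=\sup_d\phi_d$. The only subtlety worth flagging is that $\phi$ may take the value $+\infty$ (e.g.\ for $D=\partial/\partial x_1$ one has $\ker D_s=\kappa(s)[x_2,\dots,x_n]$), so "upper semicontinuous" should be read as "every super-level set $\{\phi\ge c\}$, $c\in\mathbb N_0$, is closed" — a formulation the $\sup$-of-countably-many-$\phi_d$ description accommodates automatically.
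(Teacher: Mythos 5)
Your proof is correct, but it takes a genuinely more elementary route than the paper. The paper compactifies $\mathbb A^n_A\hookrightarrow\mathbb P^n_A$, pushes $D$ forward (\prettyref{prop:P2}), filters $j_*A[\underline x]^{\oplus r}$ by the twists $\mathcal O(d)^{\oplus r}$, and then runs the proper-case machinery on each piece: representability of $M\mapsto H^0(\ker(D_d\otimes\Id_M))$ by a finitely generated module $Q_d$ (\prettyref{prop:P77}), surjectivity of $Q_{d+1}\twoheadrightarrow Q_d$ (\prettyref{lem:L2711}), and semicontinuity (\prettyref{prop:P29113}), before taking the countable supremum. You use essentially the same exhaustion (polynomials of degree $\le d$ are exactly the sections of $\mathcal O(d)$), but you stay on the affine side: each $D_{\le d}$ is an $A$-linear map of finite free $A$-modules, its formation commutes with $A\to\kappa(s)$ by the explicit description of $D_s$ (in the paper this is \prettyref{lem:L2811} together with the free-module description of $\mathcal J^N(A[\underline x]/A)$), and upper semicontinuity of $\phi_d$ is just lower semicontinuity of matrix rank via minors — no cohomology and base change, no flatness verification for the quotient sheaves $\mathcal O_{H^{n+1}_i}$ appearing in the paper's exact sequences. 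What your argument buys is self-containedness and transparency, including the correct handling of $\phi=+\infty$; what the paper's buys is generality: the $Q_d$-formalism is exactly what still works for an arbitrary integral affine $\mathbb K$-scheme compactified so that the boundary is a Cartier divisor (as used later in \prettyref{thm:T24112}), where there is no global filtration by finite free modules and your minors argument has no direct analogue. One small point to make explicit if you write this up: justify $(D_{\le d})_s=(D_s)_{\le d}$ by citing the base-change property of jet modules rather than freeness of the degree-$\le d$ piece alone, since it is a statement about the operator, not just about the modules.
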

 \begin{proof} We consider the standard compactification $j: \mathbb A^n_A\hookrightarrow \mathbb P^n_A$\, and we get a differential operator 
 $$j_*D:  j_*A[\underline{x}]^{\oplus r} = \bigcup_{n\in \mathbb N_0}\mathcal O_A(n)^{\oplus r}\longrightarrow j_*A[\underline{x}]^{\oplus r}=\bigcup_{n\in \mathbb N_0}\mathcal O_A(n)^{\oplus r}$$
 (by the push-forward property of differential operators).\\
 For each $n\in \mathbb N$\,, let $D_n=D|_{\mathcal O_A(n)^{\oplus r}}$\,.  By \prettyref{lem:L28117} and \prettyref{cor:C28115} there are minimal integers $m_1^n,...,m_r^n$\, such that $j_*D(\mathcal O_A(n)^{\oplus r})\subseteq \bigoplus_{i=1}^r\mathcal O_A(m^n_i)$\,.
 We know that 
 $$\ker(D)\cong  H^0(\mathbb P^n_A,\ker(j_*(D)))\cong \bigcup_{n\in \mathbb N_0}H^0( \mathbb P^n_A,\ker D_n).$$
 Each $D_n$\, is a differential operator of  coherent sheaves and by \prettyref{prop:P77}, for each $n\in \mathbb N_0$\, there exists a finitely generated $A$-module $Q_n$\, such that 
 $$H^0(\mathbb P^n_A, \ker (D_n\otimes_A\text{Id}_M))\cong \Hom_A(Q_n, M).$$
 For each $n\in \mathbb N_0$\, we have an inclusion of functors 
 $$\ker (D_n\otimes_A\text{Id}_M)\subseteq \ker(D_{n+1}\otimes_A\text{Id}_M).$$ This is so because for each $n\in \mathbb N_0,$\, there are exact sequences
 \begin{gather}0\longrightarrow \mathcal O_A(n)^{\oplus r}\longrightarrow \mathcal O_A(n+1)^{\oplus r}\longrightarrow \mathcal O_{\mathbb P^{n-1}_A}^{\oplus r}(n+1)\longrightarrow 0\,\,\text{and}\\
 0\longrightarrow \bigoplus_{i=1}^n\mathcal O_{\mathbb P^n_A}(m^n_i)\longrightarrow \bigoplus_{i=1}^r\mathcal O_{\mathbb P^n_A}(m^{n+1}_{i})\longrightarrow \bigoplus_{i=1}^r\mathcal O_{H^{n+1}_{i,A}}(m^{n+1}_{i})\longrightarrow 0.
 \end{gather}
 The subscheme $H^{n+1}_{i,A}\subset \mathbb P^n_A$\, is defined over $k$  since  it is defined by the exact sequence
 $$0\longrightarrow \mathcal O_{\mathbb P^n_k}(-m_i^{n+1}+m_i^n)\longrightarrow \mathcal O_{\mathbb P^n_k}\longrightarrow \mathcal O_{H^{n+1}_i}\longrightarrow 0,$$ where the first inclusion is given by multiplication by $x_{n+1}^{m_i^{n+1}-m_i^n}$\,, where $x_{n+1}=0$\, is the hyperplane at infinity. Thus $H^{n+1}_{i}$\, is a thick $\mathbb P^{n-1}_k$\,  and $\mathcal O_{H^{n+1}_{i,A}}=\mathcal O_{H^{N+1}_i}\otimes_kA$\, is thus flat over $A$.\\
 Since the right hand sides of these two exact sequences are $A$-flat, tensoring with the $A$-module $M$ gives exact sequences and 
 $$\ker(D_n\otimes \text{Id}_M)\subseteq \ker(D_{n+1}\otimes_A\Id_M)$$ as claimed. From the inclusion of functors 
 $$\ker(D_n\otimes_A\text{Id}_M)\subseteq \ker(D_{n+1}\otimes_A\text{Id}_M), n\in \mathbb N_0$$ we get by \prettyref{prop:P77} representing  finitely generated $A$-modules $Q_n$\, for the functors 
 $$M\mapsto H^0(\mathbb P^n, \ker (D_n\otimes \text{Id}_M))$$ and by \prettyref{lem:L2711} surjections of finitely generated $A$-modules $Q_{n+1}\twoheadrightarrow Q_n$ and for each $A$-module $M$
 $$\ker(j_*D\otimes_A\Id_M)=\bigcup_{n\in \mathbb N_0}\ker(D_n\otimes_A\Id_M)$$
  as Zariski sheaves on $\mathbb P^n_A$\,.\\
  For each $A$-module $M$, we get
$$H^0(\mathbb P^n_A, \ker j_*D\otimes_AM)= \varinjlim_{n\in \mathbb N_0}\Hom_A(Q_n,M).$$
Putting $M=\kappa(s)=A_{\mathfrak{p_s}}/\mathfrak{p_s}\cdot A_{\mathfrak{p_s}}$\, for $s\in \Spec A$\, , we get that 
$$\ker D_s\cong H^0(\mathbb P^n_{\kappa(s)}, \ker j_*D_s)= \varinjlim_{n\in \mathbb N_0}\Hom_A(Q_n, \kappa(s)).$$
Since $Q_{n+1}\twoheadrightarrow Q_n$\, are surjections, we have that 
$$\dim_{\kappa(s)}\ker(D_s)= \sup_{n\in \mathbb N_0}\dim_{\kappa(s)}\Hom_A(Q_n, \kappa(s))= \sup_{n\in \mathbb N_0}\dim_{\kappa(s)}\ker D_{n,s}.$$
Since  by \prettyref{prop:P29113}, $\ker(D_{n,s})$\, is an upper semicontinuous function on $\Spec A$\, with respect to the Zariski-topology, we get that $\ker D_s$\, is as the countable supremum of upper semicontinuous functions upper semicontinuous with respect to the countable Zariski-topology. \\
(the set $\{s\in \Spec A| \dim\ker (D_s)< r\}$ is the countable intersection of the Zariski-open subsets 
$$\{s\in \Spec A|\dim_{\kappa(s)}(\ker(D_{n,s})) < r\}.)$$
 \end{proof}
 We have the following 
 \begin{corollary}\mylabel{cor:C13118} Let $\mathbb A^n_A$\, be affine $n$-space over $\Spec A$ where $A$ is a noetherian ring and let 
 $$D: A[\underline{x}]^{\oplus r}\longrightarrow A[\underline{x}]^{\oplus r}$$ be a differential operator relative to $A$. If for  each $s$ in a subset $V\subset \Spec A$\, whose closure in the countable Zariski topology is equal to $\Spec A$\, the kernel of $D_s$\, is nonempy, then the kernel of $D$ is a nonzero $A$-module.
 \end{corollary}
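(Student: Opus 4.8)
The plan is to run the hypothesis through the upper semicontinuity result \prettyref{prop:P13112}, then use the explicit presentation of $\ker D$ built in its proof, and conclude with an elementary remark on finitely generated modules of full support. First, \prettyref{prop:P13112} says that $s\mapsto\dim_{\kappa(s)}\ker D_s$ is upper semicontinuous for the countable Zariski topology, so $Z:=\{s\in\Spec A:\ker D_s\neq 0\}=\{s:\dim_{\kappa(s)}\ker D_s\geq 1\}$ is closed in that topology; since $Z\supseteq V$ and the countable-Zariski-closure of $V$ is $\Spec A$, we get $Z=\Spec A$. Thus $\ker D_s\neq 0$ for \emph{every} $s\in\Spec A$, in particular at each of the finitely many minimal primes of $A$.

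Next, recall from the proof of \prettyref{prop:P13112} the finitely generated $A$-modules $Q_m$ coming from the compactification $j:\mathbb A^n_A\hookrightarrow\mathbb P^n_A$: they carry surjective transition maps $Q_{m+1}\twoheadrightarrow Q_m$, satisfy $H^0(\mathbb P^n_A,\ker(D_m\otimes_A\Id_M))\cong\Hom_A(Q_m,M)$ functorially in $M$, and yield $\ker D_s\cong\varinjlim_m\Hom_A(Q_m,\kappa(s))$ for each $s$ as well as $\ker D=\varinjlim_m\Hom_A(Q_m,A)$, the transition maps here being injective because the $Q_{m+1}\twoheadrightarrow Q_m$ are surjective, so each $\Hom_A(Q_m,A)$ is an $A$-submodule of $\ker D$. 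For a minimal prime $\mathfrak p$, nonvanishing of $\ker D_{\mathfrak p}$ gives an index $m(\mathfrak p)$ with $\Hom_A(Q_{m(\mathfrak p)},\kappa(\mathfrak p))=\Hom_{\kappa(\mathfrak p)}(Q_{m(\mathfrak p)}\otimes_A\kappa(\mathfrak p),\kappa(\mathfrak p))\neq 0$, i.e. $(Q_{m(\mathfrak p)})_{\mathfrak p}\neq 0$. Taking $N$ to be the maximum of the $m(\mathfrak p)$ over the finitely many minimal primes, the surjection $Q_N\twoheadrightarrow Q_{m(\mathfrak p)}$ forces $(Q_N)_{\mathfrak p}\neq 0$ for every minimal prime $\mathfrak p$; equivalently $\ann_A(Q_N)$ lies in every minimal prime, i.e. $\support_A Q_N=\Spec A$.

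It then suffices to prove: a finitely generated module $Q$ over a noetherian ring $A$ with $\support_A Q=\Spec A$ has $\Hom_A(Q,A)\neq 0$. Localize at a minimal prime $\mathfrak p$; then $A_{\mathfrak p}$ is Artinian local with residue field $\kappa(\mathfrak p)$, so its socle $\Hom_{A_{\mathfrak p}}(\kappa(\mathfrak p),A_{\mathfrak p})$ is nonzero, while $Q_{\mathfrak p}$ is a nonzero finitely generated $A_{\mathfrak p}$-module and hence, by Nakayama, surjects onto $\kappa(\mathfrak p)$. Composing a surjection $Q_{\mathfrak p}\twoheadrightarrow\kappa(\mathfrak p)$ with a nonzero map $\kappa(\mathfrak p)\hookrightarrow A_{\mathfrak p}$ gives $0\neq\Hom_{A_{\mathfrak p}}(Q_{\mathfrak p},A_{\mathfrak p})=\Hom_A(Q,A)_{\mathfrak p}$ (as $Q$ is finitely generated and $A$ noetherian), whence $\Hom_A(Q,A)\neq 0$. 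Applying this to $Q_N$ and using $\Hom_A(Q_N,A)\cong H^0(\mathbb P^n_A,\ker D_N)\subseteq\ker D$ produces a nonzero element of $\ker D$, as required.

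I expect the genuine difficulty to be this last step: the hypothesis only controls kernels over the \emph{residue fields} $\kappa(s)$, whereas an element of $\ker D$ is an honest homomorphism into $A$ itself, and bridging this gap is exactly what the full support of $Q_N$ buys, via the socle of the Artinian localization at a minimal prime. The remaining ingredients are essentially bookkeeping with the constructions already in place in \prettyref{prop:P13112}.
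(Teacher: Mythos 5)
Your argument is correct, and it is in fact more complete than the paper's own justification, which consists of a single appeal to the upper semicontinuity of $s\mapsto\dim_{\kappa(s)}\ker D_s$ from \prettyref{prop:P13112}. Both you and the paper start the same way: semicontinuity in the countable Zariski topology plus density of $V$ gives $\ker D_s\neq 0$ for every $s\in\Spec A$. The paper stops there, leaving implicit the passage from nonvanishing of the kernels over the residue fields $\kappa(s)$ to a nonzero kernel over $A$ itself; the analogous earlier corollary (the local-to-global principle after \prettyref{prop:P29113}) bridges this gap by a torsion-freeness/duality argument, but that argument uses an \emph{integral} affine base, an assumption absent from the statement of \prettyref{cor:C13118}. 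Your proof supplies exactly this missing bridge in the stated generality: you re-use the finitely generated modules $Q_m$ with surjective transition maps constructed in the proof of \prettyref{prop:P13112}, deduce from $\ker D_{\mathfrak p}\neq 0$ at a minimal prime $\mathfrak p$ that $(Q_N)_{\mathfrak p}\neq 0$ for a suitable $N$, and then produce a nonzero element of $\Hom_A(Q_N,A)\subseteq\ker D$ by composing a surjection $Q_{N,\mathfrak p}\twoheadrightarrow\kappa(\mathfrak p)$ with a nonzero socle element of the Artinian local ring $A_{\mathfrak p}$. This works for an arbitrary noetherian $A$ (not necessarily reduced or irreducible), where tensoring with $\kappa(\mathfrak p)$ is not exact and the naive "nonzero fiberwise kernel at the generic point" argument would not suffice; note also that your full-support step is stronger than needed, since a single minimal prime with $(Q_N)_{\mathfrak p}\neq 0$ already yields $\Hom_A(Q_N,A)\neq 0$. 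In short: same starting point as the paper, but your second half is a genuine and necessary addition, and it generalizes the duality argument the paper uses elsewhere under an integrality hypothesis.
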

 \begin{proof} This follows from the upper-semicontinuity of the dimension of the kernel in the countable Zariski-topology.
 \end{proof}
  We come to our basic theorem, saying roughly that a very general linear partial differential operator on $\mathbb A^n_{\mathbb K}$, or more generally on an affine integral  scheme of finite type over $\mathbb K,$\, $\mathbb K$\, being an uncountable base field, in particular $\mathbb K=\mathbb R,\mathbb C$,  has kernel zero, i.e. no nonzero solutions.\\
  Let $A=\mathbb K[x_1,...,x_m]$\, and consider a  partial linear differential operator $D: A^{\oplus r}\longrightarrow A^{\oplus r}$\, of some order $N$\,. The operator $D$ is then given by an $(r\times r)$-matrix $(D_{ij})$\, with $D_{ij}: A\longrightarrow A$\, of order $\leq N$\,. Thus each $D_{ij}$\, can be written as
  $$D_{ij}:=\sum_{\mid I\mid \leq N} a_{I,i,j}(\underline{x})\cdot \partial^{\mid I\mid}/\partial \underline{x}^I,$$
  where $a_i(\underline{x})$\, is a polynomial in the variables $x_i, i=1,...,m$\,. The most easy way to define a total parameter space of all differential operators  of order $\leq N$\, on $A^{\oplus r}$\, is to  bound the degree of the polynomials $a_{I,i,j}(\underline{x})$\,, say to $M\in \mathbb N$\, and consider the coefficients $a_{J,I, i,j}$ of 
  $$a_{I, i,j}(\underline{x})=\sum_{J}a_{J,I, i,j}\cdot \underline{x}^{J}$$
   as free variables $t_{J,I, i,j}$\,. Let 
  $$B:=\mathbb K[t_{J,I, i,j}\mid \quad |I| \leq N,\,\, |J| \leq M, 1\leq i,j\leq r],$$ 
  and $\Spec (A\otimes_{\mathbb K}B)=\mathbb A^m\times_k\mathbb A^{K}$\, for a certain $K$\,  and we have a universal partial differential operator 
  $$D^u:=(D_{ij}^u): \mathbb K[\underline{x}, \underline{t}]^{\oplus r}\longrightarrow \mathbb K[\underline{x}, \underline{t}]^{\oplus r},$$
   where for $1\leq i,j \leq r$\, we have 
   $$D^u_{ij}=\sum_{I, |I|\leq N} (\sum_{J, \mid J\mid \leq M}t_{I,J, i,j}\cdot \underline{x}^J)\cdot \partial^{|I|}/\partial \underline{x}^I$$
    is a differential operator relative to $B=\mathbb K[\underline{t}],$\, (since the partial derivatives do not involve the variables $t_{I,J, i,j})$\,. The kernel $K:=\ker(D)$\, of $D^u$\, is then a $B=\mathbb K[\underline{t}]$-module.
   Befor we state and prove the theorem, we give an easy lemma which is simple linear algebra.
   \begin{lemma}\mylabel{lem:L2411} With notation as just introduced, let $D:=(D_{ij})$\, be a linear partial differential operator on $A^{\oplus r}$\, of order $\leq N$\, such that $D_{ij}=0 \,\,\text{for} \quad j>i$\, and $D_{i,i}=a_{\emptyset, i,i}[\underline{x}]$\, is a nonzero polynomial. Then $\ker(D)=(\underline{0})$\,.
   \end{lemma}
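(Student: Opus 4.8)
The plan is to argue by a direct induction on the coordinate index $i = 1, \dots, r$, using only the lower-triangular shape of the matrix $(D_{ij})$ together with the fact that $A = \mathbb{K}[\underline{x}]$ is an integral domain; none of the jet-module machinery developed above is needed for this lemma.

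First I would unwind what membership in $\ker(D)$ means. An element is a tuple $s = (s_1,\dots,s_r) \in A^{\oplus r}$ satisfying $\sum_{j=1}^r D_{ij}(s_j) = 0$ in $A$ for each $i$. Since $D_{ij} = 0$ for $j > i$ by hypothesis, the $i$-th equation is really $\sum_{j=1}^i D_{ij}(s_j) = 0$.

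Next comes the induction. The base case $i = 1$ reads $D_{11}(s_1) = a_{\emptyset,1,1}(\underline{x}) \cdot s_1 = 0$; since $a_{\emptyset,1,1}$ is a nonzero element of the domain $A$, we get $s_1 = 0$. For the inductive step, assume $s_1 = \cdots = s_{i-1} = 0$. Each $D_{ij}$ is in particular additive, so $D_{ij}(s_j) = D_{ij}(0) = 0$ for $j < i$, and the $i$-th equation collapses to $D_{ii}(s_i) = a_{\emptyset,i,i}(\underline{x}) \cdot s_i = 0$; as $a_{\emptyset,i,i} \neq 0$ and $A$ is a domain, $s_i = 0$. After $r$ steps we obtain $s = (\underline{0})$, hence $\ker(D) = (\underline{0})$.

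I expect no genuine obstacle here. The only subtlety worth flagging is that a ``differential operator of order $\leq N$'' in the sense of \prettyref{def:D46} subsumes order-zero operators, i.e.\ multiplication by a polynomial, which is precisely the assumed form of the diagonal entries $D_{ii}$; and of course the argument rests entirely on $\mathbb{K}[\underline{x}]$ having no zero divisors. This lemma will serve in the main theorem as the explicit member of the parameter family $D^u$ with vanishing kernel, from which the statement for the very general operator follows by the upper-semicontinuity results \prettyref{prop:P13112} and \prettyref{cor:C13118}.
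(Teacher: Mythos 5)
Your proof is correct and follows essentially the same route as the paper: the paper argues via the minimal index $i$ with $a_i\neq 0$ and derives $D_{i,i}\cdot a_i=0$, hence a contradiction in the domain $\mathbb K[\underline{x}]$, which is just the contrapositive formulation of your forward induction. Nothing further is needed.
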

   \begin{proof} Assume that $(a_1,...,a_r)\in \ker(D)$, where some $a_i$\, is a nonzero polynomial. Let $i$\, be minimal such that $a_i(\underline{x})\neq 0$\,, i.e., 
   $$(a_1,...,a_r)=(0,...,0,a_i,a_{i+1},...,a_r).$$ Then 
 \begin{gather*} D(0,...,0,a_i, a_{i+1}, ...,a_r)=(D_{ij})\cdot (0,,...,0, a_i,...,a_r)=\\
   (D_{1,1}\cdot 0, D_{21}\cdot 0+D_{2,2}\cdot 0,..., D_{i,1}\cdot 0+D_{i,2}\cdot 0+...+D_{i,i}\cdot a_i,....,)
   \end{gather*}
   which is by assumption the zero vector,  in particular $D_{i,i}\cdot a_i=0.$\, But $D_{i,i}$\, was given by a nonzero polynomial and $\mathbb K[\underline{x}]$\, possesses no zero divisors, hence $a_i=0$\, in contradiction to the assumption that $a_i\neq 0$\,. Thus $(a_1,...,a_r)=\underline{0}$\,.
\end{proof}
\begin{theorem}\mylabel{thm:T1185} Let $A=\mathbb K[x_1,...,x_m],\quad\mathbb K=\mathbb R, \mathbb C$\, or an arbitrary uncountable base field. With notation as just introduced, let 
$$D^u=(D_{ij}^u): \mathbb K[\underline{x}, \underline{t}]^{\oplus r}\longrightarrow  \mathbb K[\underline{x}, \underline{t}]^{\oplus r}$$
be the universal differential operator, i.e., the universal family of differential operators of order $\leq N$\, where the polynomial coefficients have degree $\leq M$\,.\\
Let $K$\, be the number of parameter variables $t_{I,J,i,j}$\,.\\
Then for very general $\underline{c}\in \mathbb K^K$\, the kernel $\ker(D_{\underline{c}})=\underline{0}$\, consists only of the zero vector.\\
In particular, if $m=1$\,, i.e., the case of ordinary linear differential algebraic operators, for very general $\underline{c}$\, there is no nonzero algebraic solution.
\end{theorem}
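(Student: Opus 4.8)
The plan is to deduce the statement from the upper semicontinuity of the kernel dimension (\prettyref{prop:P13112}) together with the explicit vanishing example of \prettyref{lem:L2411}, using the uncountability of $\mathbb K$ to pass from a nonempty countable-Zariski-open locus to a very general point.

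First I would record that $D^u$ is, by construction, a differential operator on $\mathbb A^m_B$ relative to $B:=\mathbb K[\underline t]$ (with $\Spec B=\mathbb A^K_{\mathbb K}$): the coefficients $t_{I,J,i,j}$ enter as scalars for the operators $\partial^{|I|}/\partial\underline x^I$, so $D^u$ does not differentiate in the $\underline t$-directions. Hence \prettyref{prop:P13112} applies, with its ``$A$'' equal to $B$ and its ``$n$'' equal to $m$, and shows that
$$\mathbb A^K_{\mathbb K}\longrightarrow\mathbb N_0,\qquad \underline c\longmapsto \dim_{\kappa(\underline c)}\ker(D^u_{\underline c})$$
is upper semicontinuous for the countable Zariski topology. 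Consequently
$$U:=\bigl\{\underline c\in\mathbb A^K_{\mathbb K}\ \big|\ \dim_{\kappa(\underline c)}\ker(D^u_{\underline c})<1\bigr\}=\bigl\{\underline c\ \big|\ \ker(D^u_{\underline c})=\underline 0\bigr\}$$
is open in the countable Zariski topology, so its complement is a countable union of Zariski-closed subsets of $\mathbb A^K_{\mathbb K}$.

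Next I would check $U\neq\emptyset$ by writing down one point in it. Let $\underline c_0\in\mathbb K^K$ have coordinate $t_{\emptyset,\emptyset,i,i}=1$ for every $i$ and all other coordinates zero; this is a legitimate member of the parameter space, since a nonzero constant has degree $0\le M$ and multiplication by it is an operator of order $0\le N$. Then $D^u_{\underline c_0}=(D_{ij})$ has $D_{ij}=0$ for $j\neq i$ and $D_{ii}=1$; in particular $D_{ij}=0$ for $j>i$ and each $D_{ii}$ is a nonzero polynomial, so \prettyref{lem:L2411} gives $\ker(D^u_{\underline c_0})=\underline 0$ and $\underline c_0\in U$. (More generally one may put arbitrary nonzero scalars on the diagonal and arbitrary entries strictly below it.)

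Finally I would invoke the standard uncountability argument. As $\mathbb A^K_{\mathbb K}$ is irreducible and $U\neq\emptyset$, every Zariski-closed set in the countable union $\mathbb A^K_{\mathbb K}\setminus U$ is a proper closed subset, which is exactly the assertion that $\ker(D_{\underline c})=\underline 0$ for very general $\underline c$; and since $\mathbb K$ is uncountable, $\mathbb K^K$ is not a union of countably many proper $\mathbb K$-subvarieties, so such $\underline c$ genuinely exist. The case $m=1$ is the special case of systems of ordinary linear differential operators, and ``no nonzero algebraic solution'' is precisely this vanishing. There is no serious obstacle here: the only points that need a line of checking are that $D^u$ really is a differential operator relative to $B$ (so \prettyref{prop:P13112} applies as stated) and that the operator of \prettyref{lem:L2411} lies in the parameter space, both of which are immediate once $M,N\ge 0$ and $r\ge 1$.
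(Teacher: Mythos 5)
Your proposal is correct and follows essentially the same route as the paper: upper semicontinuity of the kernel dimension in the countable Zariski topology via \prettyref{prop:P13112}, nonemptiness of the zero-kernel locus via \prettyref{lem:L2411} (your identity-operator point is a special case of that lemma), and the uncountability of $\mathbb K$ to conclude that the complement, a countable union of proper Zariski-closed subsets, cannot exhaust $\mathbb A^K_{\mathbb K}$. Your explicit checks that $D^u$ is an operator relative to $\mathbb K[\underline t]$ and that the diagonal operator lies in the parameter space are worthwhile details that the paper leaves implicit.
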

\begin{proof} By \prettyref{prop:P13112}, the function 
$$\ker(D_{\underline{t}}):\mathbb A^K_{\mathbb K}\longrightarrow \mathbb N_0$$
is upper semicontinuous with respect to the countable Zariski-topology on $\mathbb A^K_{\mathbb K}$\,. As shown in the previous lemma there are operators with zero kernel in the above family. The operators with zero kernel are just the operators with dimension of the kernel $<1$,\, which is then  a countable, nonempty  intersection of Zariski-open subsets of $\mathbb A^K_{\mathbb K}$\,.  The complement is then a countable union of  Zariski closed proper subsets of $\mathbb A^K_{\mathbb K}$\, which is nowhere dense in the classical topology.
\end{proof}
As a special case, we consider  differential operators with constant coefficients. Let $A=\mathbb K[x_1,...,x_m].$\, A differential operator $A^{\oplus r}\longrightarrow A^{\oplus r}$\, is given by an $r\times r$-matrix $(D_{ij})$\, where each $D_{ij}=\sum_{I, |I|\leq N}a_{I,i,j}\cdot \partial^{|I|}/\partial \underline{x}^I$\, with $a_{I, i,j}\in \mathbb K$\,. We can define the universal family of differential operators with constant coefficients 
\begin{gather*} D^{u,c,r}: (D^{u,c,r}_{ij}):\mathbb K[\underline{x}, t_{I,i,j}\mid |I|\leq N, 1\leq i,j\leq r]^{\oplus r}\longrightarrow \mathbb K[\underline{x}, \underline{t}]^{\oplus r}\\
D_{ij}:=\sum_{I} t_{I,i,j}\cdot \partial^{|I|}/\partial \underline{x}^I,
\end{gather*}
\begin{proposition}\mylabel{prop:P24119} Let $A=\mathbb K[x_1,...,x_m]$\, and consider the  above defined universal differential  operator with constant coefficients $D^{u,c,r}: A\otimes_{\mathbb K}\mathbb K[\underline{t}]^{\oplus r}\longrightarrow A\otimes_{\mathbb K}\mathbb K[\underline{t}]^{\oplus r}$\,. Then for  very general $(\underline{c})\in \mathbb K^K$\,, the differential operator $D_{c}$\, has zero kernel.
\end{proposition}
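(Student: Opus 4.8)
The plan is to deduce the proposition from \prettyref{prop:P13112} together with the existence of a single constant-coefficient operator with trivial kernel. First I would note that $A\otimes_{\mathbb K}\mathbb K[\underline{t}]=(\mathbb K[\underline{t}])[x_1,\dots ,x_m]$, so that, taking the base ring to be $B=\mathbb K[\underline{t}]$, the universal operator $D^{u,c,r}$ is exactly of the form treated in \prettyref{prop:P13112}: since the partial derivatives $\partial^{|I|}/\partial \underline{x}^I$ do not involve the parameters $t_{I,i,j}$, the operator $D^{u,c,r}$ is a genuine differential operator of order $\le N$ relative to $B$ between the finite free $B[\underline{x}]$-modules $B[\underline{x}]^{\oplus r}$. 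Hence \prettyref{prop:P13112} applies and the function
$$\phi:\mathbb A^K_{\mathbb K}=\Spec B\longrightarrow \mathbb N_0,\qquad c\longmapsto \dim_{\kappa(c)}\ker (D_c),$$
is upper semicontinuous for the countable Zariski topology, where $D_c$ is the fibre of $D^{u,c,r}$ at $c$, i.e.\ the constant-coefficient operator on $A^{\oplus r}$ obtained by substituting the coordinates of $c$ for the $t_{I,i,j}$.

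Next I would exhibit one $\mathbb K$-rational point $c_0$ with $\phi(c_0)=0$. The simplest choice is $c_0$ given by $t_{\emptyset ,i,i}=1$ for $1\le i\le r$ and all remaining parameters zero: then $D_{c_0}$ is the identity $A^{\oplus r}\to A^{\oplus r}$, a constant-coefficient operator of order $0\le N$ in the family, and it has zero kernel --- immediately, or by \prettyref{lem:L2411}, since it is lower triangular with each diagonal entry the nonzero constant polynomial $1$. (When $N\ge 1$ one can instead take the diagonal operator with $D_{ii}=\partial/\partial x_1+1$: comparing top-degree coefficients in $x_1$ shows $\partial f/\partial x_1+f\neq 0$ for every nonzero $f\in\mathbb K[\underline{x}]$, so again the kernel vanishes.) In any case the set $U:=\{c\in\mathbb A^K_{\mathbb K}\mid \phi(c)<1\}$ is nonempty.

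Finally I would conclude as in the proof of \prettyref{thm:T1185}. By upper semicontinuity, $U=\{c\mid \dim_{\kappa(c)}\ker(D_c)=0\}$ is open for the countable Zariski topology, hence its complement is a countable union of proper Zariski-closed subsets of $\mathbb A^K_{\mathbb K}$; since $\mathbb K$ is uncountable this complement is a proper subset, and for $\mathbb K=\mathbb R,\mathbb C$ it is nowhere dense in the classical topology. Thus for very general $\underline{c}\in\mathbb K^K$ the operator $D_{\underline{c}}$ has $\ker(D_{\underline{c}})=\underline{0}$.

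I do not anticipate a genuine obstacle: all the real work is contained in \prettyref{prop:P13112}, which is already established, and the remaining points --- that the universal constant-coefficient family meets its hypotheses, and that at least one operator in the family is injective --- are immediate. The only step deserving a careful word is the identification of the fibre $D_c$ of the universal family at a $\mathbb K$-point with an honest constant-coefficient differential operator on $A^{\oplus r}$, and of its kernel with the fibre kernel appearing in \prettyref{prop:P13112}.
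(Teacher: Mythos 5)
Your proposal is correct and is in substance the same argument as the paper's: the paper deduces the proposition from \prettyref{thm:T24112} (whose proof is itself the countable-Zariski semicontinuity argument of \prettyref{prop:P13112} and \prettyref{thm:T1185}) together with the observation that the constant-coefficient family contains the identity operator, which has zero kernel. Your direct appeal to \prettyref{prop:P13112} over the base $B=\mathbb K[\underline{t}]$, with the identity (or $\partial/\partial x_1+1$) as the exhibited zero-kernel member, just cites the specialized statement instead of the general theorem, so no further comment is needed.
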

\begin{proof} This follows from \prettyref{thm:T24112} and the fact that this  universal family of operators with constant coefficients contains the identity operator which has zero kernel.
\end{proof}
It would be very interesting to find more special algebraic families of differential operators with very general kernel zero. By \prettyref{lem:L2411} and \prettyref{thm:T1185} we have e.g. the following 
\begin{corollary}\mylabel{cor:C29116} Let $D^{u,\Delta,M}:\mathbb K[\underline{x}][\underline{t}]^{\oplus r}\longrightarrow \mathbb K[\underline{x}][\underline{t}]^{\oplus r}$\,, the universal family of all differential  lower triangular operators  $(D_{ij})$\, with polynomial coefficents of degree $\leq M$\, with $D_{ij}=0$\, for $j>i$\, and
$$D_{ii}=a_i(\underline{x})+\sum_{I, |I|\leq N}(\sum_{J, |J|\leq M}t_{I,J,i}\underline{x}^J)\cdot \partial^{|I|}/\partial\underline{x}^I$$
where  $a_i(\underline{x})$\, is a fixed nonzero polynomial and the $t_{i,j,J,I}$\, being  the universal parameters.
Then for very general $\underline{c}\in \mathbb K^K$, the operator $\ker(D_{\underline{c}})=\underline{0}$\,is zero.
\end{corollary}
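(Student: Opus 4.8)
The plan is to re-run the argument of \prettyref{thm:T1185} on the sub-family cut out by the lower-triangularity conditions $D_{ij}=0$ for $j>i$ and by fixing the order-zero summand $a_i(\underline x)$ of each diagonal block. First I would check that $D^{u,\Delta,M}$ is genuinely a differential operator relative to $B=\mathbb K[\underline t]$ on $\mathbb K[\underline x][\underline t]^{\oplus r}$: each diagonal entry $D_{ii}$ is the sum of the multiplication operator by the fixed polynomial $a_i(\underline x)$, which has order $0$, and a $B$-linear combination of the operators $\partial^{|I|}/\partial\underline x^{I}$ with coefficients polynomial in $\underline x$; the off-diagonal entries $D_{ij}$, $j<i$, are of the same shape; and since none of the partial derivatives involve the parameters $t_{I,J,i}$, the whole matrix is a differential operator of order $\leq N$ relative to $B$, exactly as in the discussion preceding \prettyref{thm:T1185}. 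Hence \prettyref{prop:P13112} applies with $A=B$.

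By \prettyref{prop:P13112} the function
$$\phi\colon \mathbb A^{K}_{\mathbb K}\longrightarrow \mathbb N_0,\qquad \underline c\longmapsto \dim_{\kappa(\underline c)}\ker\bigl((D^{u,\Delta,M})_{\underline c}\bigr),$$
is upper semicontinuous for the countable Zariski topology. Therefore the set $U=\{\underline c\mid \phi(\underline c)<1\}=\{\underline c\mid \phi(\underline c)=0\}$ is a countable intersection of Zariski-open subsets of $\mathbb A^{K}_{\mathbb K}$, and it remains only to produce one point of $U$.

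For this I would specialize all the universal parameters to zero. At $\underline c=\underline 0$ the operator $(D^{u,\Delta,M})_{\underline 0}\colon \mathbb K[\underline x]^{\oplus r}\to\mathbb K[\underline x]^{\oplus r}$ is lower triangular with $D_{ij}=0$ for $j\neq i$ and diagonal entries $D_{ii}=a_i(\underline x)$, the multiplication operator by a fixed \emph{nonzero} polynomial; this is precisely an operator of the type covered by \prettyref{lem:L2411} (take $a_{\emptyset,i,i}[\underline x]=a_i(\underline x)$), so $\ker\bigl((D^{u,\Delta,M})_{\underline 0}\bigr)=(\underline 0)$ and $\underline 0\in U$. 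Since each of the Zariski-open sets whose intersection equals $U$ contains $\underline 0$, each is nonempty, hence dense in the irreducible space $\mathbb A^{K}_{\mathbb K}$; thus $U$ is a countable intersection of dense Zariski-opens, its complement is a countable union of proper Zariski-closed subsets, and — because $\mathbb K$ is uncountable — this complement is not all of $\mathbb K^{K}$ and is nowhere dense in the classical topology when $\mathbb K=\mathbb R,\mathbb C$. Thus for very general $\underline c\in\mathbb K^{K}$ one has $\ker(D_{\underline c})=(\underline 0)$.

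The only step needing attention is the verification that the restricted universal operator is still a differential operator relative to the parameter ring $B$, so that \prettyref{prop:P13112} is applicable; but, as noted, this is immediate because the differentiations occur only in the $\underline x$-directions and the fixed term $a_i(\underline x)$ merely adds an order-zero summand. No real obstacle arises: the corollary follows formally from \prettyref{lem:L2411} together with \prettyref{prop:P13112} exactly as \prettyref{thm:T1185} did, the point of \prettyref{lem:L2411} being precisely to supply the single member of the family with vanishing kernel that makes the semicontinuity argument bite.
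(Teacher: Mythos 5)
Your proposal is correct and follows exactly the route the paper intends: the paper leaves the proof of this corollary empty, stating just before it that it follows "by Lemma~\ref{lem:L2411} and Theorem~\ref{thm:T1185}", i.e.\ upper semicontinuity of the kernel dimension in the countable Zariski topology (Proposition~\ref{prop:P13112}) plus the specialization $\underline{t}=\underline{0}$, which is lower triangular with diagonal entries the nonzero multiplication operators $a_i(\underline{x})$ and hence has zero kernel by Lemma~\ref{lem:L2411}. Your write-up simply supplies the details of that same argument, including the (correct) check that the restricted family is still a differential operator relative to $\mathbb K[\underline{t}]$.
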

\begin{proof}
\end{proof}
We generalize the above theorem to the following principle.
\begin{theorem}\mylabel{thm:T24112} Let $A$\, be an   integral $\mathbb K$-algebra of finite type and $M$\, be a torsion free $A$-module of rank $r$\,  and let $D: M\otimes_{\mathbb K}B\longrightarrow M\otimes_{\mathbb K}B$\, be a family of differential operators on $M$\, of order $\leq N$\,, where $B$\, is an integral $\mathbb K$-algebra of finite type. Assume that the family $D$ contains the identity operator, or some other differential operator with kernel zero. Then, for very general $\mathfrak{m_s}\in \Spec B$\,, the operator  $D_s=D\otimes_B\kappa(s)$\, has zero kernel.
\end{theorem}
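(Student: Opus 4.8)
The plan is to deduce the statement from upper semicontinuity of the kernel dimension in the countable Zariski topology, exactly as \prettyref{thm:T1185} was deduced from \prettyref{prop:P13112}; the one new point is that $\Spec A$ must be compactified over $\mathbb{K}$ by hand, since it is no longer affine space. First I would pick a closed immersion $\Spec A \hookrightarrow \mathbb{A}^n_{\mathbb{K}} \subset \mathbb{P}^n_{\mathbb{K}}$, take $\bar X \subset \mathbb{P}^n_{\mathbb{K}}$ to be the scheme-theoretic closure, so that $\Spec A = \bar X \cap \mathbb{A}^n_{\mathbb{K}}$ is open and dense in $\bar X$ and $H := \bar X \setminus \Spec A = \bar X \cap \{x_0 = 0\}$ is an effective ample Cartier divisor, and then extend $M$ to a coherent sheaf $\mathcal{M}$ on $\bar X$ with $\mathcal{M}|_{\Spec A} = \widetilde{M}$ and with no subsheaf supported on $H$ (e.g. the image of any coherent extension in $j_*\widetilde{M}$, $j$ the open immersion), so that $j_*\widetilde{M} = \varinjlim_n \mathcal{M}(nH)$ is an increasing union of coherent subsheaves whose successive quotients are coherent sheaves supported on $H$.

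Next I would base change along $\mathbb{K} \to B$. Since $\bar X$, $H$ and all these sheaves live over the field $\mathbb{K}$, the morphism $\bar X_B \to \Spec B$ is proper and flat, $H_B$ is flat over $B$, and $\mathcal{M}_B$, every twist $\mathcal{M}_B(nH_B)$, and every successive quotient is flat over $B$. The operator $D$ is a differential operator on $\widetilde{M}\otimes_{\mathbb{K}}B = \mathcal{M}_B|_{(\Spec A)_B}$ relative to $\Spec B$, so by \prettyref{prop:P2} its pushforward $j_*D$ is a differential operator on $\varinjlim_n \mathcal{M}_B(nH_B)$ relative to $\Spec B$, and by \prettyref{lem:L28117} and \prettyref{cor:C28115} it restricts, for each $n$, to a differential operator $D_n : \mathcal{M}_B(nH_B) \to \mathcal{M}_B(m(n)H_B)$ of coherent sheaves flat over $B$, with $m(n)$ nondecreasing and $j_*D = \varinjlim_n D_n$. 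From here the proof of \prettyref{prop:P13112} applies word for word: $B$-flatness of the successive quotients makes $\ker(D_n\otimes_B\Id_{(-)}) \subseteq \ker(D_{n+1}\otimes_B\Id_{(-)})$ an increasing chain of left exact subfunctors of $(B-\Mod)$ with union $\ker(j_*D\otimes_B\Id_{(-)})$; \prettyref{prop:P77} represents $M \mapsto H^0(\bar X_B, \ker(D_n\otimes_B\Id_M))$ by a finitely generated $B$-module $Q_n$, \prettyref{lem:L2711} gives surjections $Q_{n+1}\twoheadrightarrow Q_n$, and hence $\dim_{\kappa(s)}\ker D_s = \sup_n \dim_{\kappa(s)}\Hom_B(Q_n,\kappa(s)) = \sup_n \dim_{\kappa(s)}\ker D_{n,s}$ for every $s \in \Spec B$, a countable supremum of functions that are Zariski upper semicontinuous by \prettyref{prop:P29113}. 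Hence $s \mapsto \dim_{\kappa(s)}\ker D_s$ is upper semicontinuous for the countable Zariski topology on $\Spec B$. Finally, the assumption that $D$ has a fibre with zero kernel (e.g. the identity) shows that $\{s \in \Spec B : \dim_{\kappa(s)}\ker D_s = 0\} = \bigcap_n \{s : \dim_{\kappa(s)}\ker D_{n,s} < 1\}$ is a nonempty countable intersection of Zariski open subsets of $\Spec B$; since $B$ is an integral $\mathbb{K}$-algebra of finite type with $\mathbb{K}$ uncountable, its complement is a countable union of proper Zariski closed subsets and hence a proper subset of $\Spec B$ (nowhere dense in the classical topology when $\mathbb{K} = \mathbb{R},\mathbb{C}$), so $\ker D_s = \underline{0}$ for very general $s \in \Spec B$.

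The only genuine obstacle is this geometric set-up: one must arrange that $\Spec A$ embeds in a proper $\mathbb{K}$-scheme as the complement of the support of an ample Cartier divisor and that the coherent extension $\mathcal{M}$ is well behaved along it, because this is precisely what guarantees, after base change to $B$, that all sheaves occurring are $B$-flat and that \prettyref{prop:P77} and \prettyref{prop:P29113} apply; the torsion-freeness and rank hypotheses on $M$ are not really used beyond ensuring $\widetilde{M}$ is coherent of the expected generic size, and once the compactification is in place the argument is a line-by-line transcription of \prettyref{prop:P13112} followed by the conclusion of \prettyref{thm:T1185}.
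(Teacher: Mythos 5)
Your proposal is correct and follows essentially the same route as the paper: compactify $\Spec A$ over $\mathbb K$, push the operator forward, filter $j_{B*}$ of the module by twists along the boundary divisor (all defined over $\mathbb K$, hence $B$-flat after base change), and then run the \prettyref{prop:P13112}/\prettyref{thm:T1185} argument via \prettyref{prop:P77}, \prettyref{lem:L2711} and \prettyref{prop:P29113} together with the zero-kernel member of the family. The only deviations are cosmetic and legitimate: you extend $M$ directly to a coherent subsheaf of $j_*\widetilde{M}$ with no boundary-supported sections instead of first restricting to an open locus where $M$ is free (the paper's use of torsion-freeness), and you note that the hyperplane section of the integral closure is already an effective Cartier divisor, so the paper's blow-up step is unnecessary.
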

\begin{proof}  First, let $\Spec A_0\subset \Spec A$\, be a Zariski open subset, where $M$\, is a free $A$-module. It suffices to show, that the family $D$\, restricted to $\Spec A_0$\, has zero kernel for very general $s\in \Spec B$\,. I.e., we get a family 
$$D^0: M\mid_{\Spec A_0}\otimes_{\mathbb K}B\longrightarrow M\mid_{\Spec A_0}\otimes_{\mathbb K}B.$$ There is an inclusion $\text{res}_{A,A_0}: M\otimes_{\mathbb K}B\hookrightarrow M\mid_{\Spec A_0}\otimes_{\mathbb K}B$\,  since $M$\, was assumed to be torsion free and thus $\ker(D)\subset \ker(D^0)$\,. (Remember that the kernel forms a  Zariski sheaf on $\Spec A$\, which is a subsheaf of $\widetilde{M}$\, and therefore has injective restriction maps $r_{U,V}$\,, since $M$\, is torsion free) .\\ So we are reduced to the case that $M$\, is a free $A$-module. \\
We  now have to find an integral compactification. This is easy. Embedding $\Spec A$\, into some $\mathbb A^n_{\mathbb K}$ and then into $\mathbb P^n_{\mathbb K}$\, and taking the closure of $\Spec A$\, in $\mathbb P^n$\, with the reduced scheme structure, we find an integral compactification of finite type $X$\, of $\Spec A$\,. By blowing up $X\backslash \Spec A$\, we can assume that  $X\backslash \Spec A$\, is  the support of an effective Cartier divisor. Then, 
$$X_B:=X\times_{\mathbb K}\Spec B\quad \text{and}\quad D_B:=D\times_{\mathbb K}\Spec B$$ are flat 
 over $\Spec B$\, and $\Spec B\otimes_{\mathbb K}A=\Spec A\times_{\mathbb K}\Spec B$\, sits inside $X_B$\, via the open immersion $j_B$\, with complement $D_B$\,.  We have that 
 $$j_{B*}A^{\oplus r}=\bigcup_{n\in \mathbb N_0}\mathcal O_{X_B}(nD_B)^{\oplus r}$$
 (just  rational functions on $X_B$\, with poles of finite order along $D_B$\,). So the proof of \prettyref{thm:T1185} carries over verbatim. For each $n\in \mathbb N$\,, there is an integer $m_n$\, such that $j_{B*}D(\mathcal O_{X_B}(nD_B)^{\oplus r})\subset \mathcal O_{X_B}(m_nD_B)^{\oplus r}$\, and the subscheme $H_{n.B}\subset X_B$\, defined by 
 $$0\longrightarrow \mathcal O_{X_B}((m_n-m_{n+1})D_B)\longrightarrow \mathcal O_{X_B}\longrightarrow \mathcal O_{H_{n,B}}\longrightarrow 0$$
 is flat over $B$\, since it is defined over $\mathbb K$\, (by a multiple of the section defining the effective Cartier divisor $D$).\\
  Thus, the function $\ker D_s: \Spec B\longrightarrow \mathbb N_0$\, is upper semicontinuous with respect to the countable Zariski topology. Since the set of all $s\in \Spec B$\, where $\ker(D_s)=\underline{0}$\,  is nonempty (the identity operator has kernel zero) and is the countable intersection of Zariski-open subsets, for very general $s\in \Spec B$, the kernel of $D_s$\, is zero.
\end{proof}
\begin{remark}\mylabel{rem:R28113} Observe, that the compactification is defined over $\mathbb K$\, but that the differential operator $j_{B*}D$\, is defined over $B$. The only thing we need is \prettyref{lem:L28117} and \prettyref{cor:C28115}, saying that a differential operator between quasicoherent sheaves, that are unions of coherent subsheaves can be written as a union of differential operators between these coherent subsheaves (follows from the fact that the relative  jet-modules of a morphism of finite type between noetherian schemes are coherent).
\end{remark}
If $A$\, is an arbitrary integral $\mathbb K$-algebra  of finite type and $M$\, is a torsion free $A$-module, we have that $DO^N(M,M)$\, is a finitely generated $A$-module. We can write $DO^N(M,M)=\bigcup_{n\in \mathbb N}V_n$\, where  all $V_n$\,  are finite dimensional $\mathbb K$-vector spaces containing $\text{Id}_M$\,. Viewing each $V_n$\, as an affine space  $\mathbb A^{K_n}_{\mathbb K}$\, over $\mathbb K$\,, there is a differential operator  
$$D^{u,n}: M\otimes_{\mathbb K}\mathbb K[\underline{t}]\longrightarrow M\otimes_{\mathbb K}\mathbb K[\underline{t}]$$ relative to $\mathbb K[\underline{t}]$.
We make the convention that the point $\underline{0}$\, corresponds to the identity operator, i.e., we write $a'_{\emptyset, i,j}(\underline{x},\underline{t})=1+a_{\emptyset,i,j}(\underline{x},\underline{t})$\,.
 By the above theorem, for each $n\in \mathbb N$\,, for very general $v\in V_n$\,, the kernel of $D^{u,n}_v$\, is zero. We can view $DO^N(M,M)$\, as an infinite affine $\mathbb K$-space endowed with the inductive limit topology, that is an affine
  $\text{Ind}$-scheme over $\mathbb K$\,. The topology does not depend on the choosen exaustion $DO^N(M,M)=\bigcup_{n\in \mathbb N}V_n$\,.
  We consider on $DO^N(M,M)$\, the countable $\text{Ind}$-Zariski-topology, which is the inductive limit of the countable Zariski-topologies on $V_n$.  Then, by \prettyref{thm:T1185}, the set of all $v\in DO^N(M,M)$\, such that $\ker(D_v)=(0)$\, is open with respect to the countable $\text{Ind}$-Zariski-topology. We thus have the following
 \begin{theorem}\mylabel{thm:T24116} With notation as above, with respect to the $\text{Ind}$-Zariski-topology, a very general partial linear differential operator in the infinite dimensional affine space $DO^N(M,M)$\, has kernel zero.
 \end{theorem}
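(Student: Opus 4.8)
The plan is to deduce the statement from the finite-dimensional result \prettyref{thm:T24112}, applied separately at each stage of the exhaustion $DO^N(M,M)=\bigcup_{n\in\mathbb N}V_n$, and then to verify that the resulting ``bad loci'' are compatible with the inclusions $V_n\hookrightarrow V_{n+1}$ so that they assemble into a single proper closed subset for the countable $\mathrm{Ind}$-Zariski-topology. No new geometric input beyond what already went into \prettyref{thm:T1185} and \prettyref{thm:T24112} should be needed; what remains is bookkeeping.

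First I would fix, for each $n$, the universal family $D^{u,n}\colon M\otimes_{\mathbb K}\mathbb K[\underline t]\longrightarrow M\otimes_{\mathbb K}\mathbb K[\underline t]$ over $V_n\cong\mathbb A^{K_n}_{\mathbb K}$, normalised so that $\underline 0$ corresponds to $\mathrm{Id}_M$. For $v\in V_n$ the fibre $D^{u,n}_v$ is literally the differential operator attached to the point $v\in DO^N(M,M)$, so the function $v\mapsto\dim_{\kappa(v)}\ker D_v$ restricted to $V_n$ coincides with $v\mapsto\dim_{\kappa(v)}\ker D^{u,n}_v$. By the argument carried out in the proof of \prettyref{thm:T24112} --- compactify $\Spec A$ over $\mathbb K$ by an integral scheme whose boundary is the support of an effective Cartier divisor, push the operator forward, and invoke \prettyref{prop:P77}, \prettyref{prop:P29113} and \prettyref{lem:L2711} exactly as in \prettyref{prop:P13112} --- this function is upper semicontinuous on $V_n$ for the countable Zariski-topology. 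Hence $Z_n:=\{v\in V_n\mid \ker D_v\neq\underline 0\}$ is a countable union of Zariski-closed subsets of $V_n$, and since $\mathrm{Id}_M\in V_n$ has zero kernel, \prettyref{thm:T24112} shows that each of these Zariski-closed pieces is a \emph{proper} subset of $V_n$.

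Next I would record the crucial compatibility: the condition $\ker D_v\neq\underline 0$ depends only on $v$ as a point of $DO^N(M,M)$ and not on the chosen level, so $Z_n=Z_{n+1}\cap V_n$ for all $n$. Therefore $Z:=\bigcup_n Z_n=\{v\in DO^N(M,M)\mid \ker D_v\neq\underline 0\}$ satisfies $Z\cap V_n=Z_n$, a countable union of proper Zariski-closed subsets, for every $n$; by the very definition of the countable $\mathrm{Ind}$-Zariski-topology this makes $Z$ a closed subset of $DO^N(M,M)$ which is a countable union of proper closed subsets, hence nowhere dense in that topology (it is proper since $\mathrm{Id}_M\notin Z$, and over the uncountable field $\mathbb K$ no $\mathbb A^{K_n}_{\mathbb K}$ is a countable union of proper subvarieties). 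Its complement is exactly the set of differential operators with kernel zero, so this set is dense open in the countable $\mathrm{Ind}$-Zariski-topology, i.e.\ a very general $v\in DO^N(M,M)$ has $\ker D_v=\underline 0$. Finally I would note that the conclusion is independent of the chosen exhaustion $\bigcup_n V_n$, because the $\mathrm{Ind}$-topology itself was already observed not to depend on it.

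The only point that needs genuine care, rather than being an obstacle, is the assertion that at each finite level the bad locus is an honest countable union of \emph{proper} Zariski-closed sets and that these cohere under $V_n\hookrightarrow V_{n+1}$; once \prettyref{thm:T24112} (whose proof already delivers the countable-Zariski upper semicontinuity over an arbitrary finite-type integral $\mathbb K$-base, here $\mathbb K[\underline t]$) is available, everything else is formal.
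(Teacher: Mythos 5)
Your argument is correct and is essentially the paper's own: the paper leaves the proof environment of this theorem empty, the intended argument being exactly the discussion preceding it --- exhaust $DO^N(M,M)=\bigcup_n V_n$ by finite-dimensional spaces containing $\Id_M$, apply \prettyref{thm:T24112} to the universal family over each $V_n\cong\mathbb A^{K_n}_{\mathbb K}$, and conclude openness of the zero-kernel locus in the countable $\mathrm{Ind}$-Zariski-topology because openness there is checked levelwise. Your write-up merely makes explicit the bookkeeping the paper takes for granted (the compatibility $Z_n=Z_{n+1}\cap V_n$, properness of the closed pieces via $\Id_M$, and independence of the exhaustion), so it is the same proof in more detail.
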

 \begin{proof}
 \end{proof} 
We have the following simple application which makes use of the fact that $\mathbb K^K$\, is a Baire topological space.
\begin{proposition}\mylabel{prop:P28114} Let $A$\, be an integral $\mathbb K$-algebra, $M$\, be a finitely  generated $A$-module and 
$$M\otimes_{\mathbb K}\mathbb K[\underline{t}]\longrightarrow M\otimes_{\mathbb K}\mathbb K[\underline{t}]$$
be a differential operator on $M\otimes_{\mathbb  K}\mathbb K[\underline{t}]$ relative  to $\mathbb K[\underline{t}]$\,, i.e., an algebraic family of differential operators $D_{\underline{c}}$\, with $\underline{c}\in \mathbb K^K$\,.\\
Given $\underline{c}\in \mathbb K^K$\, and $\epsilon >0$\,, there is $\underline{c'}\in B(\underline{c}, \epsilon)$\, such that $\ker(D_{\underline{c'}})=\underline{0}$\,.
\end{proposition}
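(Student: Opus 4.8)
The plan is to deduce the statement from the Baire category theorem, with the algebro-geometric input being the semicontinuity results already established. Throughout, $\mathbb K$ is $\mathbb R$ or $\mathbb C$, so that the ball $B(\underline c,\epsilon)\subset\mathbb K^K$ has a meaning, and — as in \prettyref{thm:T24112} — I take as part of the data that the algebraic family $\{D_{\underline c}\}_{\underline c\in\mathbb K^K}$ contains at least one operator with zero kernel, for instance the identity; without such an assumption the conclusion is plainly false. First I would single out the locus
$$U:=\{\,\underline c\in\mathbb K^K \mid \ker(D_{\underline c})=\underline 0\,\}.$$
By \prettyref{prop:P13112} (the semicontinuity statement underlying \prettyref{thm:T24112}), applied with parameter ring $\mathbb K[\underline t]$, the function $\underline c\mapsto\dim_{\mathbb K}\ker(D_{\underline c})$ is upper semicontinuous on $\mathbb A^K_{\mathbb K}$ for the countable Zariski topology; concretely $U=\bigcap_{n\in\mathbb N}U_n$, where each $U_n$ is the set of closed points of a genuine Zariski-open subset of $\mathbb A^K_{\mathbb K}$.

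Next I would verify that each $U_n$ is open and dense in the metric space $\mathbb K^K$. Openness for the Euclidean topology is immediate from Zariski-openness. For density, note that $U_n\supseteq U$ and $U\neq\emptyset$ by the standing hypothesis, so $U_n$ is a \emph{nonempty} Zariski-open set; its complement is then a proper Zariski-closed subset of $\mathbb A^K_{\mathbb K}$, hence contained in the zero set of a nonzero polynomial in the variables $\underline t$, and the real or complex vanishing locus of a nonzero polynomial has empty interior (a polynomial vanishing on an open ball has all Taylor coefficients zero). Thus $\mathbb K^K\setminus U_n$ is nowhere dense and $U_n$ is dense. Since $\mathbb K^K$ is a finite product of copies of $\mathbb R$ or $\mathbb C$, it is a complete metric space, hence a Baire space, so the countable intersection $U=\bigcap_n U_n$ of dense open sets is itself dense in $\mathbb K^K$. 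In particular $U\cap B(\underline c,\epsilon)\neq\emptyset$ for every $\underline c$ and every $\epsilon>0$, and any $\underline c'$ in this intersection satisfies $\ker(D_{\underline c'})=\underline 0$.

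The two points that want care are (i) the nonemptiness of $U$, which is exactly where one uses that the family is nondegenerate — supplied in the concrete situations by \prettyref{lem:L2411} together with \prettyref{thm:T1185} — and (ii) the passage from ``proper Zariski-closed'' to ``nowhere dense in the classical topology'', which is the elementary fact that a nonzero polynomial over $\mathbb R$ or $\mathbb C$ cannot vanish on an open ball. Neither is a genuine obstacle: the first is handled upstream and the second is standard, so the argument is essentially a repackaging of \prettyref{prop:P13112} together with Baire category. The only thing I would double-check carefully is that the hypotheses of \prettyref{prop:P13112} really apply to the family at hand — e.g. replacing $M$ by a free module over a dense open subset of $\Spec A$, as in the proof of \prettyref{thm:T24112}, before invoking the semicontinuity of the kernel dimension.
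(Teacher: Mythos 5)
Your argument is essentially the paper's own: the published proof is precisely the Baire-category observation that an $\epsilon$-ball in $\mathbb K^K$ cannot be covered by countably many nowhere dense Zariski-closed sets, with the countable-Zariski upper semicontinuity of $\underline{c}\mapsto\dim_{\mathbb K}\ker(D_{\underline{c}})$ (Proposition \ref*{prop:P13112} via Theorem \ref*{thm:T24112}) supplying those closed sets, exactly as you set it up. The only place you go beyond the paper is in stating explicitly that the family must contain an operator with zero kernel so that each $U_n$ is nonempty and its complement is a \emph{proper} (hence nowhere dense) Zariski-closed set; the paper's statement omits this hypothesis and its one-line proof takes it for granted from the surrounding context (where $\underline{0}$ corresponds to the identity operator), and your caveat is warranted, since without some such nondegeneracy assumption the constant family $D_{\underline{c}}=0$ is a counterexample.
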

\begin{proof} This follows from the fact  that $\mathbb K^K$\, with the classical topology is a Baire topological space. Then,   an open  $\epsilon$-ball around some $\underline{c}\in \mathbb K^K$\, cannot be the countable union of Zariski-closed subsets which are nowhere dense.
\end{proof}
\begin{remark}\mylabel{rem:R112} This says, that given a linear partial differential operator on $M$, one can achieve by arbitrary small changes of its coefficients, that the kernel is zero.
\end{remark}
\begin{proposition}\mylabel{prop:P291133} With notation as above, let $V:=DO^N(M,M)=\bigcup_{n\in \mathbb N}V_n$\, (we again make the convention that the identity operator corresonds to the zero vector) and give $V$ the lc-inductive limit topology, where each $V_n$\, is equipped with the classical topology. This is the finest lc-topology on $V$. Then, the set $C\subset V$\,, of all $v\in V$\, with $\ker(D_v)=\underline{0}$\, has empty interior.
\end{proposition}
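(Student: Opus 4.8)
The plan is to prove the equivalent assertion that $V\setminus C$, the set of operators admitting a nonzero algebraic solution, is dense in $V$. Assume not; then some nonempty open $U\subseteq C$ exists, and we fix $v_0\in U$. I will produce a sequence $v_k$ with $\ker(D_{v_k})\neq\underline 0$ and $v_k\to v_0$ in $V$. Since a sequence contained in a single $V_n$ converges in the locally convex inductive limit exactly when it converges classically in $V_n$, it suffices to keep every difference $v_k-v_0$ inside one finite--dimensional $V_n$ and let it tend to $0$ there. As $U$ is open and contains $v_0$, this forces $v_k\in U\subseteq C$ for $k\gg0$, contradicting $\ker(D_{v_k})\neq\underline 0$. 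So the whole content is: arbitrarily close to any $v_0\in C$ there lie operators with a nonzero algebraic solution.

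I would produce these by discretizing a formal solution of $D_{v_0}$; I describe the construction for $M=A=\mathbb K[x_1,\dots,x_m]$, the general torsion--free case being analogous. Even though $D_{v_0}$ has no algebraic solution, it carries a nonzero formal solution $\psi$ of exponential type -- for a system of ordinary operators a Hukuhara--Turrittin solution $\psi=e^{P(x)}x^{\rho}\sum_{\nu\ge0}c_\nu x^{-\nu/q}$, and for a genuinely partial system a formal power--series solution transverse to a non--characteristic hypersurface. The idea is to replace $\psi$ by honest algebraic functions $f_k$ and cancel the resulting defect $D_{v_0}(f_k)$ by a tiny change of $v_0$. The model case is $m=r=1$, $N=1$, $D_{v_0}=\partial-b(x)$ with $b\neq0$ (so $\ker D_{v_0}=\underline 0$): put $h:=\int b$ and $f_k:=(h(x)+k)^{k}$, the natural polynomial avatar of $e^{h}=\lim_k(1+h/k)^{k}$. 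Then $D_{v_0}(f_k)=k(h+k)^{k-1}h'-b(h+k)^{k}=-b(x)h(x)(h+k)^{k-1}=-\tfrac{h(x)}{k}\,\partial(f_k)$, so with $E_k:=-\tfrac{h(x)}{k}\,\partial$ we get $(D_{v_0}-E_k)(f_k)=0$. Letting $v_k$ be the parameter of $D_{v_k}:=D_{v_0}-E_k=(1+\tfrac{h(x)}{k})\,\partial-b(x)$, we have $f_k\in\ker(D_{v_k})$, while $v_k-v_0$ is $\tfrac1k$ times the fixed operator $h(x)\,\partial$, hence lies in one $V_n$ and tends to $0$. This gives the required sequence for such $v_0$.

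The general case follows the same pattern: truncate the series part of $\psi$, clear the ramification $x\mapsto x^{1/q}$, and replace $e^{P(x)}$ by polynomials of the shape $(P(x)+k)^{k}$, obtaining algebraic $f_k$ for which $D_{v_0}(f_k)$ is divisible by the top--order derivatives of $f_k$ with ``quotient'' of size $O(1/k)$ -- the gain of a power of $k$ being exactly the statement that $\psi$ is an honest formal solution, so the leading $k$--asymptotics of $D_{v_0}(f_k)$ cancel. One then solves $E_k(f_k)=D_{v_0}(f_k)$ for a differential operator $E_k$ of order $\le N$ supported on a fixed finite set of monomials -- using, as in \prettyref{lem:L28117} and \prettyref{cor:C28115}, that both $D_{v_0}$ and $E_k$ involve only finitely many of the $\mathcal O(n)$ after compactification -- and sets $D_{v_k}:=D_{v_0}-E_k$.

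The step I expect to be the main obstacle is precisely this last estimate in the general case: representing the defect $D_{v_0}(f_k)$ as $E_k(f_k)$ with all coefficients of $E_k$ of size $O(1/k)$. Here the order $N$, the principal symbol of $D_{v_0}$, and the slopes of its Newton polygon genuinely interact: for $N=1$ it is the one--line computation above, but for higher order the discretized phase and the syzygy used to write the defect must be chosen with care, and it is this bookkeeping, not the soft topological reduction, that carries the proof. (Throughout $\operatorname{char}\mathbb K=0$ enters in forming $\int b$ and the phase $P$, consistent with $\mathbb K=\mathbb R,\mathbb C$; in this way the proposition is the natural counterpart, in the finest locally convex topology where $V$ fails to be a Baire space, to the Baire--category statement \prettyref{prop:P28114}.)
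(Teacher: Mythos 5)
Your proposal takes the statement at face value and tries to prove that the locus of operators with nonzero algebraic kernel is dense in the finest lc topology. The topological reduction is fine (a perturbation of the form $v_k=v_0+\tfrac1k w$ with $w$ fixed converges to $v_0$, since absolutely convex neighbourhoods of $0$ are absorbing), and the model computation is correct: $\bigl(1+\tfrac hk\bigr)\partial-b$ does kill $(h+k)^k$ for $h=\int b$. But this settles only the case $m=r=1$, $N=1$, monic $\partial-b$. Everything beyond that is a programme, not a proof, and the missing step is genuinely substantial: for $m\ge 2$ there is no Hukuhara--Turrittin normal form; a general $D_{v_0}$ of order $\le N$ need not have \emph{any} nonzero formal solution to discretize (the identity operator, or any operator with invertible zero-order part and vanishing higher-order part, lies in $C$ and must also be approximated, yet your starting point $\psi$ does not exist for it); noncharacteristic hypersurfaces need a nondegenerate symbol and force denominators when you solve for the top normal derivative over $\mathbb K[\underline{x}]$; and the key claim that the defect $D_{v_0}(f_k)$ can be rewritten as $E_k(f_k)$ with $E_k$ of order $\le N$, supported in a \emph{fixed} finite-dimensional space of operators and with coefficients $O(1/k)$, is exactly the unproved estimate you yourself flag. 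As it stands, the proposal does not prove the proposition, nor does it extend to systems, higher order, or general torsion-free $M$.

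You should also know that the paper's own proof is completely different and much softer: it assumes $C^{\circ}\ne\emptyset$, intersects with some $V_{n_0}$ to get a nonempty classical-open subset of $V_{n_0}$ inside $C$, and then invokes \prettyref{prop:P28114}. But \prettyref{prop:P28114} produces, in every classical ball of a finite-dimensional family containing the identity, an operator with kernel \emph{zero}; that contradicts an open set consisting of operators with \emph{nonzero} kernel, not an open subset of $C$ as literally defined. In other words, the paper's two-line argument establishes the proposition with the condition $\ker(D_v)=\underline 0$ replaced by $\ker(D_v)\ne\underline 0$ (the nonzero-kernel locus has empty lc-interior), and the statement as printed --- density of the nonzero-kernel locus, which is what you set out to prove --- is not delivered by the paper's argument at all; it would require precisely the kind of hard approximation you began. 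So before investing in the general construction, resolve this zero/nonzero mismatch between the wording of the proposition and the proof the paper actually runs; if the intended statement is the one the paper proves, your approach is unnecessary, and if the intended statement is the literal one, your approach is the right kind of attack but is at present only carried out in the simplest case.
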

\begin{proof} Suppose to the contrary that $C^{\circ}\neq \emptyset$. Then $C^{\circ}$\, is a nonempty open subset of $V$ and is of the form $a_0+U$\, where $U$\, is a neighbourhood of zero in $V$\, and contains the absolutely convex hull of open neighbourhoods of zero $0\in U_n\subset V_n$\,, 
$$U\supseteq \text{axc}(U_n\mid n\in \mathbb N).$$
 Let $a_0$\, lie in $V_{n_0}$\,. Then $A^{\circ}\cap V_{n_0}\supseteq a_0+U_{n_0}$\,,
 which is a nonempty open subset in $V_{n_0}$\, which is a contradiction to \prettyref{prop:P28114}. Thus, $C^{\circ}=\emptyset$\,.
 \end{proof}
 An application of the same principle works for families of differential operators with finite dimensional kernel. 
 We first prove the following 
 \begin{lemma}\mylabel{lem:L1121} Consider on $k[x]^{\oplus r}$ a lower triangular differential operator $D=(D_{ij})$\, of the following form:
 \[D=\begin{pmatrix} \partial^1/\partial^1 x & 0 & 0 & \cdots & 0\\
                     D_{21}& \partial^1/\partial^1x  & 0 & \cdots & 0\\
                     \hdotsfor{5}\\
                     D_{r1} & D_{r2} & D_{r3} & \cdots & \partial^1/\partial^1x,
                     \end{pmatrix}
                     \]
                     where each $D_{ij}$\, has constant term zero.
                     Then, the kernel of $D$ consists precisely of the vectors of constant functions and consequently  has finite dimension $r$.
                     \end{lemma}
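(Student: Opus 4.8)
The plan is to establish the two inclusions $k^{\oplus r}\subseteq\ker(D)$ and $\ker(D)\subseteq k^{\oplus r}$ separately, where $k^{\oplus r}\subseteq k[x]^{\oplus r}$ denotes the subspace of tuples of constant polynomials; the nontrivial inclusion will be proved by induction on the row index, exploiting the lower triangular shape of $D$.

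First I would record the only two facts about the entries of $D$ that are needed. The diagonal operator $\partial^1/\partial^1 x = d/dx$ annihilates every constant, and each off-diagonal entry $D_{ij}$ with $i>j$, being by hypothesis of the form $\sum_{1\le|I|\le N}a_{I,i,j}(x)\,\partial^{|I|}/\partial x^{I}$ with no zeroth-order term (equivalently $a_{\emptyset,i,j}=0$), also annihilates every constant. Consequently, for $(c_1,\dots,c_r)\in k^{\oplus r}$ the $i$-th coordinate of $D(c_1,\dots,c_r)$ is $\sum_{j<i}D_{ij}(c_j)+(d/dx)c_i=0$, so $k^{\oplus r}\subseteq\ker(D)$.

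For the reverse inclusion, let $(a_1,\dots,a_r)\in\ker(D)$ and prove by induction on $i$ that $a_i\in k$. The first coordinate of $D(a_1,\dots,a_r)$ is $D_{11}(a_1)=a_1'$, which vanishes, hence $a_1$ is a polynomial with zero derivative and therefore constant. Assuming $a_1,\dots,a_{i-1}\in k$, the $i$-th coordinate of $D(a_1,\dots,a_r)$ equals $\sum_{j<i}D_{ij}(a_j)+a_i'$; each term $D_{ij}(a_j)$ with $j<i$ vanishes because $a_j$ is constant and $D_{ij}$ kills constants, so $a_i'=0$ and $a_i\in k$. This completes the induction, whence $\ker(D)=k^{\oplus r}$, an $r$-dimensional $k$-vector space, as claimed.

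The argument is elementary linear algebra together with the triangular structure of $D$; the one point that deserves attention—indeed the only genuine hypothesis tacitly used—is that the polynomials in $k[x]$ annihilated by $d/dx$ are exactly the constants, which requires $\operatorname{char}k=0$ (in characteristic $p$ one has $\ker(d/dx)=k[x^p]$ and the statement fails). I therefore assume $k$ has characteristic zero, in line with the base fields $\mathbb K=\mathbb R,\mathbb C$ of the main results.
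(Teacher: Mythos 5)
Your proof is correct and takes essentially the same route as the paper's: induction on the row index, using the lower triangular shape and the fact that each $D_{ij}$ annihilates constants because it has no zeroth-order term. Your two additions---explicitly verifying the easy inclusion $k^{\oplus r}\subseteq\ker(D)$ and flagging that $\ker(d/dx)=k$ requires $\operatorname{char}k=0$ (the lemma indeed fails in characteristic $p$)---are sound points that the paper leaves implicit.
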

 \begin{proof} Let $(a_1(x), ...,a_r(x))$\, be in the kernel of $D$. First, we have $\partial^1/\partial^1x(a_1(x))=0$\, and thus $a_1(x)=c_1\in \mathbb K$\,. Suppose by way of induction, we have proven that $a_i(x)=c_i $\, for $1\leq i\leq m<r$\,. Then 
 $$\sum_{i=1}^{m+1}D_{m+1,i}a_i(x)= D_{m+1,1}c_1+D_{m+1,2}c_2+...+\partial^1/\partial^1x(a_{m+1}(x))=0$$ from which  it follows $\partial^1/\partial^1x(a_{m+1}(x))=0$\, because all $D_{m+1,i}$\, have zero constant term. Thus $a_{m+1}(x)=c_{m+1}$\, and the claim is proved.
 \end{proof}
 \begin{proposition}\mylabel{prop:P1124} Consider the following algebraic family of ordinary homgenous linear  differential operators $$D=(D_{ij}): \mathbb K[x,\underline{t}]^{\oplus r}\longrightarrow  \mathbb K[x,\underline{t}]^{\oplus r}$$
 with the following properties.
 \begin{enumerate}[1]
 \item $D_{\underline{0}}$\, is lower triangular and $D_{ii,\underline{0}}= \partial^1/\partial^1x$\, for $1\leq i\leq r$\,;
 \item for all $\underline{c}\in \mathbb A^K_{\mathbb K}$\,, $D_{\underline{c}, i,j}$\, has no constant term.
 \end{enumerate}
 Then, the very general operator in this family has finite nonzero kernel of dimension $1\leq \dim(\ker(D_{\underline{c}}))\leq r.$
 \end{proposition}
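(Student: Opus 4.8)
The plan is to deduce this from two results already at hand: \prettyref{lem:L1121}, which computes the kernel at the distinguished parameter value $\underline 0$, and \prettyref{prop:P13112}, the upper semicontinuity of the kernel dimension in the countable Zariski topology.

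First I would establish the lower bound, which in fact holds at \emph{every} point $\underline c$ of the parameter space $\mathbb A^K_{\mathbb K}=\Spec\mathbb K[\underline t]$. Assumption (2) says that each entry $D_{\underline c,ij}$ has vanishing zeroth order term, hence $D_{\underline c,ij}(1)=0$; consequently $D_{\underline c}$ annihilates every constant vector $(c_1,\dots,c_r)\in\mathbb K^{\oplus r}\subset\mathbb K[x]^{\oplus r}$, since its $i$-th component is $\sum_j c_j\,D_{\underline c,ij}(1)=0$. Thus $\mathbb K^{\oplus r}\subseteq\ker(D_{\underline c})$ and $\dim_{\mathbb K}\ker(D_{\underline c})\ge r\ge 1$ for all $\underline c$, so every member of the family has nonzero kernel.

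For the upper bound I would observe that at $\underline c=\underline 0$ the operator $D_{\underline 0}$ is lower triangular with every diagonal entry equal to $\partial^1/\partial^1 x$ by (1) and every entry of zero constant term by (2); hence $D_{\underline 0}$ is exactly of the shape treated in \prettyref{lem:L1121}, so $\dim_{\mathbb K}\ker(D_{\underline 0})=r$. Applying \prettyref{prop:P13112} with $A=\mathbb K[\underline t]$ and $n=1$, the function $\underline c\mapsto\dim_{\kappa(\underline c)}\ker(D_{\underline c})$ is upper semicontinuous on $\mathbb A^K_{\mathbb K}$ for the countable Zariski topology, so the set
\[
U:=\{\,\underline c\in\mathbb A^K_{\mathbb K}\ :\ \dim_{\kappa(\underline c)}\ker(D_{\underline c})<r+1\,\}
\]
is open in that topology and nonempty, since $\underline 0\in U$. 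As $\mathbb A^K_{\mathbb K}$ is integral, the complement of $U$ is then a countable union of \emph{proper} Zariski-closed subsets, i.e.\ $U$ is very general (its complement nowhere dense in the classical topology when $\mathbb K=\mathbb R,\mathbb C$), by exactly the argument used in the proof of \prettyref{thm:T1185}. For $\underline c\in U$ we get $r\le\dim_{\kappa(\underline c)}\ker(D_{\underline c})<r+1$, which forces $\dim(\ker(D_{\underline c}))=r$, and in particular the asserted $1\le\dim(\ker(D_{\underline c}))\le r$.

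I do not anticipate a genuine obstacle: the statement is really just the combination of \prettyref{lem:L1121} and \prettyref{prop:P13112}. The only points that need a moment's care are (a) that ``no constant term'' literally pushes the constant vectors into the kernel for all parameter values, which is immediate from the definition of the constant term of a differential operator, and (b) that $D_{\underline 0}$ satisfies the hypotheses of \prettyref{lem:L1121}, which is precisely what (1) and (2) say at the origin. The one slightly technical move, passing from ``nonempty and open in the countable Zariski topology'' to ``very general'', is routine and identical to the step in \prettyref{thm:T1185}, so it can simply be cited.
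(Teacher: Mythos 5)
Your proposal is correct and follows essentially the same route as the paper's own proof: constants lie in every kernel by the no-constant-term hypothesis, \prettyref{lem:L1121} gives kernel dimension exactly $r$ at $\underline 0$, and \prettyref{prop:P13112} (upper semicontinuity in the countable Zariski topology) then yields the bound for very general $\underline c$. Your write-up is merely more explicit, and in fact pins down the sharper conclusion $\dim\ker(D_{\underline c})=r$ for very general $\underline c$, which implies the stated inequalities.
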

 \begin{proof} Obviously under the assumtions made, for each operator $D_{\underline{c}}$\,, the kernel  contains  the vectors of constant functions. By the previous lemma, the family contains operators with kernel dimension $r$. The result follows by upper semicontiuity of the kernel dimension in the countable Zariski-topology.
 \end{proof}
 This gives a lot of examples of ordinary linear differential operators with kernel dimension $r$.
 I think now the principle has become clear. Take any linear partial differential operator where we know the kernel and then vary it in an algebraic family to produce new operators with this property.\\
There are furthermore relatively simply methods to produce more families of differential operators with nonzero, or if $n\geq 2$\,, with infinite dimensional kernel. 
The most natural familiy of operators on $\mathbb K[\underline{x}]^{\oplus r}\longrightarrow \mathbb K[\underline{x}]^{\oplus r}$\, with infinite dimensional kernel is given by all $D=(D_{ij})$, where  
$$D_{ij}=\sum_{I, |I|\leq N}a_{I,i,j}\cdot \partial^{|I|}/\partial \underline{x}^I,$$
where all $a_{I,i,j}=0$ if $i_1 \neq  0$,
 where $I=(i_1,...,i_n)$\, and $a_{i,j,\emptyset}=0$\,. Then, $\mathbb K[x_1]^{\oplus r}\in \ker(D)$\, and $\ker(D)$\, is infinite dimensional. It is easy to construct the universal family which is a linear affine subspace $L$ in the total parameter space.\\
The idea is that the presentation of a differential operator $D:=(D_{ij})$\,, depends 
\begin{enumerate}[a]
\item On the choice of algebraic coordinates on $\mathbb A^n_{\mathbb K}$\,, i.e., polynomial functions $x_{i'}:=\phi_i(\underline{x})$\, such that 
$$\phi: =\underline{\phi}: \mathbb K[\underline{x}]\longrightarrow \mathbb K[\underline{x}]\quad x_i\mapsto \phi_i(\underline{x})$$
is an automorphims of the free $\mathbb K$-algebra $\mathbb K[\underline{x}]$.
\item On the choice of a $\mathbb K[\underline{x}]$-basis of $\mathbb K[\underline{x}]^{\oplus r}$.
\end{enumerate}
E.g., if $A=\mathbb K[x]_f, f\in \mathbb K[x]$\, and $D: \partial^1/\partial^1 x: A\longrightarrow A$\,, then the absolute term of $D$ is zero, but this depends on the choice of basis for the free $A$-module $A= A\cdot e$\,. If we choose the basis $e'=\frac{1}{f}e$\,, then the same $D$\, in the basis $e'$\, is given by 
$$D'(h)=\frac{1}{f}D(f\cdot h)=\frac{1}{f}\cdot \partial^1/\partial^1x(fh)= \partial^1/\partial^1x (h)+\frac{\partial^1/\partial^1x(f)}{f}\cdot h$$ and the absolute term is nonzero for general $f\in \mathbb K[x]$\,. In the basis $e_i'$\, this is the same differential operator and has in particular the same kernel dimension. Putting $e_i'=e_i$\,, we get a new different operator with the same kernel.\\
The arguement with the algebraic basis of $\mathbb K[\mathbb A^n_{\mathbb K}]$\, is the same. Rewrite a given differential operator in a new basis $x_i'=\phi_i(\underline{x})$\, (there we need the transformation behavior of the differentials $d^1\phi_i(\underline{x})$\, which is given by the Taylor formulas  and put $x_i'=x_i$\, to get a new differential operator.\\
Let $\text{GL}^r(\mathbb K[\underline{x}])$\,  be the general linear group of all matrices $(p_{ij})$\, such that $\det(p_{ij})$\, is invertibel in $\mathbb K[\underline{x}]$\,. If $\mathbb K =\mathbb C,$\, since any nonconstant polynomial possesses a nonzero solution, the determinant $\det((p_{ij}))$\,
 has to be a nonzero complex number. We do not view this as a group scheme over $\mathbb K[\underline{x}]$\, but as an "infinite dimensional algebraic group" over $\mathbb K$\,.\\
Likewise, by $\text{Gl}^n_2(\mathbb K)$\, we denote the automorphism group of the free $\mathbb K$-algebra $\mathbb K[x_1,...,x_n]$\,. This can also be viewed as in infinite dimensional algebraic group. We then have the following 
\begin{theorem}\mylabel{thm:T312} Let $V=DO^N(\mathbb K[\underline{x}]^{\oplus r})$\, be the universal parameter space of linear partial differential operators $D: \mathbb K[\underline{x}]^{\oplus r}\longrightarrow \mathbb K[\underline{x}]^{\oplus r}$\,. Then, the group $\text{Gl}^r(\mathbb K[\underline{x}])$\, and the group $\text{Gl}^n_2(\mathbb K)$\, act on $V$\,, where the first action is given by 
$$A\circ D= A^{-1}\cdot D\cdot A\quad \text{for}\quad A\in \text{Gl}^r(\mathbb K[\underline{x}])\quad\text{and}\quad D\in V.$$
\end{theorem}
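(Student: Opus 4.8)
The plan is to treat both actions as instances of one mechanism: the class of differential operators of order $\leq N$ relative to $\mathbb K$ from $\mathbb K[\underline x]^{\oplus r}$ to itself is stable both under pre- and post-composition with $\mathbb K[\underline x]$-linear isomorphisms and under transport of structure along a $\mathbb K$-algebra automorphism of $\mathbb K[\underline x]$, and in each case stability is read off from the factorization $D=\widetilde D\circ d^N_{\mathcal F/\mathbb K}$ of \prettyref{def:D46} together with the functoriality of the jet modules already invoked in \prettyref{lem:L28117}. Granting stability, each of the two action axioms is a one-line computation.

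For $A\in\text{Gl}^r(\mathbb K[\underline x])$, both $A$ and $A^{-1}$ are $\mathbb K[\underline x]$-linear automorphisms of $\mathbb K[\underline x]^{\oplus r}$, hence differential operators of order $0$. Writing $D=\widetilde D\circ d^N$ with $\widetilde D\colon\mathcal J^N(\mathbb K[\underline x]^{\oplus r}/\mathbb K)\to\mathbb K[\underline x]^{\oplus r}$ being $\mathcal O_X$-linear, functoriality of $\mathcal J^N(-/\mathbb K)$ in the module supplies a $\mathbb K[\underline x]$-linear map $\mathcal J^N(A)$ with $\mathcal J^N(A)\circ d^N=d^N\circ A$, and then
$$A^{-1}\cdot D\cdot A=\bigl(A^{-1}\circ\widetilde D\circ\mathcal J^N(A)\bigr)\circ d^N_{\mathbb K[\underline x]^{\oplus r}/\mathbb K},$$
whose bracketed factor is a composite of $\mathcal O_X$-linear maps, so $A^{-1}DA$ again lies in $V$. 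The axioms are then immediate: $\operatorname{Id}\circ D=D$ and $(AB)^{-1}D(AB)=B^{-1}(A^{-1}DA)B$, i.e. $(AB)\circ D=B\circ(A\circ D)$; with the sign convention of the statement this is a right action (equivalently a left action of the isomorphic opposite group via $A\mapsto A^{-1}$, or one replaces the formula by $ADA^{-1}$). One also records the point of the construction: $\ker(A^{-1}DA)=A^{-1}(\ker D)$, so the kernel dimension is unchanged.

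For the second action, let $\phi\in\text{Gl}^n_2(\mathbb K)$, an automorphism of $\mathbb K[x_1,\dots,x_n]$, act coordinatewise by the $\phi$-semilinear automorphism $\phi_{(r)}$ of $\mathbb K[\underline x]^{\oplus r}$, and set $\phi\cdot D:=\phi_{(r)}\circ D\circ\phi_{(r)}^{-1}$. Since $\mathcal J^N(-/\mathbb K)$ is functorial for ring automorphisms (one may quote \prettyref{lem:L2811} for $\mathbb K\longrightarrow\mathbb K[\underline x]\stackrel{\phi}{\longrightarrow}\mathbb K[\underline x]$, or argue directly), $\phi$ induces a $\phi$-semilinear automorphism $\Psi_\phi$ of $\mathcal J^N(\mathbb K[\underline x]^{\oplus r}/\mathbb K)$ with $\Psi_\phi\circ d^N=d^N\circ\phi_{(r)}$, so $\phi\cdot D=(\phi_{(r)}\circ\widetilde D\circ\Psi_\phi^{-1})\circ d^N$; the three semilinearities cancel and the bracketed factor is $\mathcal O_X$-linear, so $\phi\cdot D\in V$. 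Here $(\phi\psi)_{(r)}=\phi_{(r)}\psi_{(r)}$ gives $\operatorname{Id}\cdot D=D$ and $(\phi\psi)\cdot D=\phi\cdot(\psi\cdot D)$, a genuine left action, and again $\ker(\phi\cdot D)=\phi_{(r)}(\ker D)$.

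I expect the only real content beyond bookkeeping to be keeping the two $\mathcal O_X$-module structures on $\mathcal J^N(\mathbb K[\underline x]^{\oplus r}/\mathbb K)$ apart and verifying that the $\phi$-semilinearities of $\phi_{(r)}$ and $\Psi_\phi$ genuinely cancel, so that the transported operator is $\mathcal O_X$-linear and not merely $\mathbb K$-linear; this is where \prettyref{lem:L45} and the functoriality of the jet modules do the work, and once it is pinned down the rest is formal. If desired one finishes by noting that the two actions are compatible --- $\text{Gl}^n_2(\mathbb K)$ acts on $\text{Gl}^r(\mathbb K[\underline x])$ by applying $\phi$ to matrix entries --- so that together they give an action of the semidirect product $\text{Gl}^r(\mathbb K[\underline x])\rtimes\text{Gl}^n_2(\mathbb K)$ on $V$ preserving the kernel dimension, which is the use the surrounding discussion makes of it.
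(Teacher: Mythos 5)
Your argument is correct, and there is nothing in the paper to compare it against: the paper states \prettyref{thm:T312} with no proof at all (the text passes directly from the theorem to \prettyref{cor:C3124}), so your write-up supplies exactly the verification the paper omits. The mechanism you use is the natural one and is consistent with the paper's own toolkit: for $A\in \text{Gl}^r(\mathbb K[\underline{x}])$ the induced map $\mathcal J^N(A)$ exists and satisfies $\mathcal J^N(A)\circ d^N=d^N\circ A$ by the universal property of \prettyref{lem:L45} (apply it to $d^N\circ A$, which is linear for the second module structure), and then $A^{-1}\cdot D\cdot A=(A^{-1}\circ\widetilde D\circ\mathcal J^N(A))\circ d^N$ exhibits the conjugate as an operator of order $\leq N$; similarly, a $\mathbb K$-algebra automorphism $\phi$ induces a $\phi$-semilinear automorphism of $\mathcal J^N(\mathbb K[\underline{x}]^{\oplus r}/\mathbb K)$ (transport of structure on $\mathbb K[\underline{x}]\otimes_{\mathbb K}\mathbb K[\underline{x}]/I^{N+1}$ tensored with $\phi_{(r)}$, rather than \prettyref{lem:L2811}, which is about base change and not quite the statement needed, as you note with ``or argue directly''), and the semilinearities of $\phi_{(r)}$ and $\Psi_\phi^{-1}$ cancel so the transported operator is again $\mathcal O_X$-linear after the universal derivation. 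You are also right to flag that the formula $A\circ D=A^{-1}\cdot D\cdot A$ as printed is an anti-homomorphism, i.e.\ a right action (or a left action after replacing it by $A\cdot D\cdot A^{-1}$ or $A\mapsto A^{-1}$); that is a defect of the theorem's wording, not of your proof. The closing remarks on preservation of kernels and on the compatibility giving an action of the semidirect product match the use the surrounding discussion (in particular \prettyref{cor:C3124} and the Question--Conjecture) makes of the statement.
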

\begin{corollary}\mylabel{cor:C3124} Let $L\subset V=DO^N(\mathbb K[\underline{x}]^{\oplus r})$\, be the familiy of differential operators $D=D_{ij}$\, with $a_{\emptyset,i,j}=0$\, and $a_{I,i,j}=0$\, if $i_1\neq 0$, which has infinite dimensional kernel. 
 For arbitrary $g\in \text{Gl}^r(\mathbb K[\underline{x}])$\, or $g\in \text{Gl}^n_2(\mathbb K)$\,, we get families $g\circ L\subset V$\, with infinite dimensional kernel.
\end{corollary}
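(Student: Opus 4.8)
The plan is to reduce the statement to one elementary observation: both actions appearing in \prettyref{thm:T312} are realised by conjugating with a $\mathbb K$-linear automorphism of the vector space $\mathbb K[\underline{x}]^{\oplus r}$, and such a conjugation does not change the $\mathbb K$-dimension of the kernel. Consequently, passing from $D$ to $g\circ D$ transports the infinite dimensional kernel of the members of $L$ verbatim.

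First I would dispatch the $\text{Gl}^r(\mathbb K[\underline{x}])$-case. By \prettyref{thm:T312} the action is $A\circ D=A^{-1}\cdot D\cdot A$, and $A$ is in particular a $\mathbb K$-linear automorphism of $\mathbb K[\underline{x}]^{\oplus r}$; hence $v\in\ker(A^{-1}DA)$ if and only if $Av\in\ker D$, so $\ker(A\circ D)=A^{-1}(\ker D)$ and $\dim_{\mathbb K}\ker(A\circ D)=\dim_{\mathbb K}\ker D$.

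Next I would make the $\text{Gl}^n_2(\mathbb K)$-action explicit. If $\phi\colon\mathbb K[\underline{x}]\to\mathbb K[\underline{x}]$, $x_i\mapsto\phi_i(\underline{x})$, is an automorphism of the free algebra and $\Phi\colon\mathbb K[\underline{x}]^{\oplus r}\to\mathbb K[\underline{x}]^{\oplus r}$ is the induced $\mathbb K$-linear automorphism obtained by applying $\phi$ coordinatewise, then I would check that ``rewrite $D$ in the algebraic coordinates $x_i'=\phi_i(\underline{x})$ and rename $x_i'$ to $x_i$'' is precisely the conjugation $\Phi\circ D\circ\Phi^{-1}$; this is the transformation law for the differentials $d^1\phi_i(\underline{x})$ alluded to before \prettyref{thm:T312}, and it follows formally from the functoriality of the jet modules (\prettyref{lem:L45} and the construction in Section 3), which at the same time shows that $\Phi\circ D\circ\Phi^{-1}$ is again a differential operator of order $\le N$, hence lies in $V$. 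The kernel computation of the previous paragraph then applies with $T=\Phi$ in place of $A$, giving $\dim_{\mathbb K}\ker(g\circ D)=\dim_{\mathbb K}\ker D$.

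Finally I would assemble the conclusion: every $D\in L$ annihilates $\mathbb K[x_1]^{\oplus r}$ — since $a_{\emptyset,i,j}=0$ and $a_{I,i,j}=0$ whenever $i_1\ne 0$, a vector all of whose entries are polynomials in $x_1$ alone is killed by $D$ — so $\ker D$ is infinite dimensional, and by the two kernel computations $\ker(g\circ D)$ is infinite dimensional for every such $g$ and every $D\in L$, i.e. every member of the family $g\circ L\subset V$ has infinite dimensional kernel. The only non-formal step is the identification of the $\text{Gl}^n_2(\mathbb K)$-action with conjugation by $\Phi$ (and, with it, that it really lands in $V$); the $\text{Gl}^r$-case and the dimension invariance are one-line verifications, so I expect the coordinate-change bookkeeping to be the main — though still routine — obstacle.
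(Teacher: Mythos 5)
Your argument is correct and matches the paper's own (implicit) justification: the paper gives no separate proof of this corollary, relying on the discussion preceding \prettyref{thm:T312} that the $\text{Gl}^r(\mathbb K[\underline{x}])$- and $\text{Gl}^n_2(\mathbb K)$-actions merely rewrite an operator in a new module basis, respectively new algebraic coordinates, so the transformed operator is ``the same'' operator with the same kernel dimension --- which is exactly your conjugation argument $\ker(A^{-1}DA)=A^{-1}(\ker D)$, resp.\ $\ker(\Phi D\Phi^{-1})=\Phi(\ker D)$. Your closing observation that every $D\in L$ annihilates $\mathbb K[x_1]^{\oplus r}$, hence has infinite dimensional kernel, is likewise the fact the paper records when introducing $L$, so your write-up simply makes the paper's intended reasoning explicit.
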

Observe, if the automorphism $\underline{\phi}$\, simply permutes the variables, we get affine subspaces $L_j$\, for each $j=1,...,n$\,.\\
We also know that the cardinality of all families with infinite dimensional kernel is countable, so the quotient of $G$ by the  stablizer of $L\subset V$\, must be discrete.\\
A very important question would be the following
\begin{Question-Conjecture} Let $V^{r,n,N}:=DO^N(\mathbb K[\underline{x}]^{\oplus r})$\, be the universal parameter space of linear partial differential operators on $\mathbb K[x_1,...,x_n]^{\oplus r}$\,  of order $\leq N$\, and $L\subset V^{r,N,n}$\, be the affine subspace of all $(D_{ij})$\, with $a_{\emptyset, i,j}=0$\, and $a_{I, i,j}=0$\, if $i_1\neq 0$\,. As $g$ runs through $\text{Gl}^r(\mathbb K[\underline{x}])$\, or $\text{Gl}^n_2(\mathbb K)$\,, or the correctly defined (semi)-direct product, are the families $g\circ L$\, all families with infinite dimensional algebraic kernel?
\end{Question-Conjecture}
As  a final application of \prettyref{prop:P29113} we  treat the complete case.
\begin{proposition}\mylabel{prop:P24110} Let $X$ be a complete algebraic scheme of finite type over a  base  field $k$, or, more generally a complete algebraic space of finite type over $k$ and $\mathcal E$\, be a coherent sheaf on $X$. Let $N\in \mathbb N$\, be given. Then, the general  differential operator $D: \mathcal E\longrightarrow \mathcal E$\,   has zero kernel. 
\end{proposition}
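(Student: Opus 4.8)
The plan is to rerun the argument behind \prettyref{thm:T1185}, exploiting two simplifications available when $X$ is complete. First, because $X$ is proper over $k$ the \emph{full} space of differential operators $\mathcal E\to\mathcal E$ of order $\leq N$ is already a finite-dimensional $k$-vector space, so no bound on ``coefficient degrees'' needs to be imposed. Secondly, and crucially, the kernel-dimension function will then be upper semicontinuous in the \emph{ordinary} Zariski topology --- via \prettyref{prop:P29113} applied directly, with no recourse to a compactification --- rather than only in the countable Zariski topology that the non-proper situation of \prettyref{thm:T1185} forces through \prettyref{prop:P13112}. Consequently ``general'' (a single dense Zariski-open) suffices, and no uncountability hypothesis on $k$ is needed.

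Concretely, I would first build the universal family. By \prettyref{def:D46} a differential operator $\mathcal E\to\mathcal E$ of order $\leq N$ relative to $k$ is the same datum as an $\mathcal O_X$-linear map $\mathcal J^N(\mathcal E/k)\to\mathcal E$, i.e.\ a global section of the coherent sheaf $\sheafHom_{\mathcal O_X}(\mathcal J^N(\mathcal E/k),\mathcal E)$ ($\mathcal J^N(\mathcal E/k)$ being coherent since $X$ is of finite type over $k$). As $X$ is complete, $W:=\Hom_{\mathcal O_X}(\mathcal J^N(\mathcal E/k),\mathcal E)$ is a finite-dimensional $k$-vector space, and I would set $P:=\Spec(\Sym_k W^\vee)\cong\mathbb A^K_k$ with $K=\dim_k W$, $X_P:=X\times_k P$ and $\mathcal E_P:=\mathcal E\otimes_k\mathcal O_P$. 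Using the jet-module base change isomorphism $\mathcal J^N(\mathcal E_P/P)\cong\mathcal J^N(\mathcal E/k)\otimes_k\mathcal O_P$ of \prettyref{lem:L2811}, the tautological class $\id_W\in W\otimes_k W^\vee\subset W\otimes_k\mathcal O(P)=\Hom_{\mathcal O_X}(\mathcal J^N(\mathcal E/k),\mathcal E)\otimes_k\mathcal O(P)$ base-changes to an $\mathcal O_{X_P}$-linear map $\mathcal J^N(\mathcal E_P/P)\to\mathcal E_P$; precomposing with $d^N_{\mathcal E_P/P}$ produces a differential operator $D^u:\mathcal E_P\to\mathcal E_P$ relative to $P$ whose fibre over a closed point $s\in P$, viewed as an element $D_s\in W$, is $D_s$ itself.

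Then I would invoke \prettyref{prop:P29113}. The morphism $X_P\to P$ is proper ($X$ is complete) and flat (base change of the flat morphism $X\to\Spec k$), and $\mathcal E_P$ is coherent and flat over $P$ since $\mathcal E$ is flat over the field $k$; hence $s\mapsto h^0(X_s,\ker(D^u_s))=\dim_{\kappa(s)}\ker(D_s)$ is upper semicontinuous on $P$ in the Zariski topology, so that $U:=\{\,s\in P:\ker(D_s)=\underline 0\,\}=\{\,s\in P:h^0(X_s,\ker D_s)<1\,\}$ is Zariski-open. Finally, $U$ is nonempty, because the identity $\id_{\mathcal E}$ is a differential operator of order $0\leq N$ (it factors as $\mathcal E\xrightarrow{d^N_{\mathcal E/k}}\mathcal J^N(\mathcal E/k)\to\mathcal J^0(\mathcal E/k)=\mathcal E$) with $\ker(\id_{\mathcal E})=\underline 0$, so the point of $P$ it determines lies in $U$; since $P\cong\mathbb A^K_k$ is irreducible, $U$ is dense, which is exactly the assertion.

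The step I expect to require genuine care --- the main obstacle --- is the construction in the second paragraph: one must check that $D^u$ really is a differential operator \emph{relative to the base $P$}, not merely relative to $k$, and that its fibres are the prescribed operators $D_s$. This is precisely where the jet-module base change of \prettyref{lem:L2811} and the representing property \prettyref{lem:L45} (together with \prettyref{lem:L1130}) are used to keep track of the universal section; once that identification is secured the whole claim has been reduced to the already-established \prettyref{prop:P29113}. For a complete algebraic space the argument is unchanged, save that one cites the corresponding finiteness theorem for coherent cohomology of a proper algebraic space.
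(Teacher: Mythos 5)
Your proposal is correct and follows essentially the same route as the paper: identify the order-$\leq N$ operators with global sections of the coherent sheaf $\sheafHom_{\mathcal O_X}(\mathcal J^N(\mathcal E/k),\mathcal E)$ (finite-dimensional since $X$ is complete), form the universal operator on $X\times_k V$ relative to the parameter space, apply the semicontinuity result \prettyref{prop:P29113} to the flat proper family, and conclude nonemptiness of the open locus from the identity operator. Your extra care about the construction of $D^u$ via \prettyref{lem:L2811} and \prettyref{lem:L45} only spells out details the paper leaves implicit.
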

\begin{proof} The sheaf 
$$DO^N(\mathcal E,\mathcal E)=\Hom_{\mathcal O_X}(\mathcal J^N(\mathcal E/k),\mathcal E)$$ is coherent and thus $V:=H^0(X, DO^N(\mathcal E,\mathcal E))$\, is a finite dimensional $k$-vector space.  There is a universal differential operator $D^u: p^*\mathcal E\longrightarrow p^*\mathcal E$\, on $X\times_kV$\, relative to $V$, where 
$$p: X\times_kV\longrightarrow X\quad \text{and}\quad q: X\times_kV\longrightarrow V$$ are the projections. The morphism $q$ is flat and proper, and $p^*\mathcal E$\,  is flat over $V$. Thus, we may apply \prettyref{prop:P29113} (semicontinuity) to conclude that there is a Zariski-open subset $U\subset V$\, such that for $u\in U$\,, $\ker(D_u)=0$\,. The open subset is nonzero since it contains the identity operator $\text{Id}_{\mathcal E}$\,.
\end{proof}
\subsection{Flat extension of differential operators over the complex numbers}
 As any coherent structure over the complex numbers, given a complete complex algebraic variety $X_{\mathbb C},$\, coherent sheaves $\mathcal E_{1,\mathbb C},\mathcal E_{2,\mathbb C}$\, and a differential operator $$ D_{\mathbb C}:\mathcal E_{1,\mathbb C}\longrightarrow \mathcal E_{2,\mathbb C},$$
 by standard technique we can find a finitely generated $\mathbb Z$-algebra $R\subset \mathbb C$\,, an integral variety $X_R$,  proper over $R$ and coherent sheaves $\mathcal E_{1,R},\mathcal E_{2,R}$, flat over $R$  and a relative differential operator $\mathcal D_R: \mathcal E_{1,R}\longrightarrow \mathcal E_{2,R}$\, over $R$, extending $D$ from $\mathbb C$\, to $R$. The only thing one has to observe, is that the jet module $\mathcal J^N(\mathcal E_{1,\mathbb C}/\mathbb C)$\, is a fixed coherent sheaf on $X_{\mathbb C}$\,. We then have the following
 \begin{corollary}\mylabel{thm:T88} With notation as above, let $(X_R, \mathcal E_{1,R},\mathcal E_{2,R}, D_R)$\, be a flat extension of a differential operator $$\mathcal E_{1,\mathbb C},\mathcal E_{2,\mathbb C}, D_{\mathbb C}$$ on $X_{\mathbb C}$\, to the spectrum of a finitely generated  integral $\mathbb Z$-algebra $R$. If for a dense set of points $y\in \Spec R$\, with $\text{char}(\kappa(y))=p>0$\, a  nonzero solution exists for $D_y$\,, then for $D_{\mathbb C}$\, a nonzero solution exists.
 \end{corollary}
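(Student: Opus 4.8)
The plan is to push the whole question onto the finitely generated module that represents the kernel functor, and then to use the single elementary fact that a finitely generated module has Zariski-closed support.

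First I would apply \prettyref{prop:P77} to the proper morphism $X_R\longrightarrow \Spec R$ of noetherian schemes (note that $R$ is noetherian, being a finitely generated $\mathbb Z$-algebra, and that $\mathcal E_{1,R},\mathcal E_{2,R}$ are coherent and flat over $R$): it provides a finitely generated $R$-module $Q$ together with a natural isomorphism $H^0(X_R,\ker(D_R\otimes_R\id_M))\cong\Hom_R(Q,M)$ for all $R$-modules $M$. Next I would invoke the affine base change property \prettyref{prop:P117}: for any homomorphism of noetherian rings $R\longrightarrow B$ the module representing the kernel functor of the base-changed operator is $Q\otimes_RB$. I apply this twice: with $B=\kappa(y)$ for $y\in\Spec R$, which gives $H^0(X_y,\ker D_y)\cong\Hom_{\kappa(y)}(Q\otimes_R\kappa(y),\kappa(y))$; and with $B=\mathbb C$, viewed as an $R$-algebra via the fixed embedding $R\subset\mathbb C$, which gives $H^0(X_{\mathbb C},\ker D_{\mathbb C})\cong\Hom_{\mathbb C}(Q\otimes_R\mathbb C,\mathbb C)$.

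The rest is formal. For a point $y$, the operator $D_y$ has a nonzero solution precisely when $\Hom_{\kappa(y)}(Q\otimes_R\kappa(y),\kappa(y))\ne 0$, i.e.\ precisely when $Q\otimes_R\kappa(y)\ne 0$, i.e.\ precisely when $y\in\support(Q)$. By hypothesis these $y$ form a dense subset of $\Spec R$. Since $Q$ is finitely generated, $\support(Q)=V(\ann_R Q)$ is Zariski-closed, and a closed dense subset of the irreducible space $\Spec R$ must be all of $\Spec R$; in particular the generic point of $\Spec R$ lies in $\support(Q)$, so $Q\otimes_R K(R)\ne 0$. Then $Q\otimes_R\mathbb C\cong(Q\otimes_R K(R))\otimes_{K(R)}\mathbb C$ is nonzero, being the extension of a nonzero $K(R)$-vector space along the field extension $K(R)\hookrightarrow\mathbb C$, and therefore $\Hom_{\mathbb C}(Q\otimes_R\mathbb C,\mathbb C)\ne 0$, which is exactly the assertion that $D_{\mathbb C}$ has a nonzero solution.

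The only delicate points, and where I expect the substance of the argument to lie, are the two translations supplied by \prettyref{prop:P77} and \prettyref{prop:P117}: that ``nonzero solution of $D_y$'' really means $H^0(X_y,\ker D_y)\ne 0$ and is controlled by the one $R$-module $Q$, and that \prettyref{prop:P117} genuinely applies to the non-finite extension $R\subset\mathbb C$ (it does, since it only requires $\mathbb C$ to be a noetherian ring and the base-changed data to satisfy the hypotheses of \prettyref{prop:P77}, which they do). I would also remark that the hypothesis $\operatorname{char}\kappa(y)=p>0$ is not actually used; it merely indicates where one expects the solutions $D_y$ to originate (for instance from Frobenius), whereas the mechanism of the proof is only that nonvanishing of $Q$ on a dense family of fibres forces nonvanishing at the generic fibre, a property preserved by every field extension --- in effect a converse to the specialization statement \prettyref{cor:C1311}.
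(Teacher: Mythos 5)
Your proof is correct and follows essentially the same route as the paper: it reduces everything to the representing finitely generated $R$-module $Q$ of \prettyref{prop:P77}, argues that dense fibrewise nonvanishing forces $Q$ to be nonzero at the generic point (i.e.\ not torsion), and then concludes that $\Hom$ into $\mathbb C$ is nonzero. You merely fill in details the paper leaves implicit, namely the closedness of $\support(Q)$ and the explicit appeal to the base-change statement \prettyref{prop:P117}.
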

 \begin{proof} The representing $R$-module $Q$ cannot be a torsion module, i.e., is nonzero over the generic point of $R$ which corresponds to the quotient field $K(R)\subset \mathbb C$\,. But then $\text{Hom}_R(Q,\mathbb C)=H^0(X_{\mathbb C}, \text{ker}D_{\mathbb C})$\, is nonzero.
 \end{proof}
 This corollary opens up the possibility  to use characteristic $p$-methods in the study of differential operators over the complex numbers, at least in the complete case. The problem with the affine case is that if $R$\, is a finitely generated $\mathbb Z$-algebra, the set of prime ideals is countable, and thus the countable Zariski-topology is the discrete topology. \\
In forthcoming papers, we want to further investigate algebraic families of differential operators on affine integral schemes over $\mathbb K$\, and, in particular study the supports of the kernel sheaves which can also be viewed as projective varieties since the kernel sheaves are invariant under scaling with a constant from the base field. 
\begin{remark}\mylabel{rem:R1125} As the jet-module formalism and cohomology and base change are also available in the complex analytic setting, in principle, the same theorems hold in the complex analytic category. For noncompact base spaces, one obviously faces the problem, that not each manifold or complex analytic space and each differential operator is compactifyable.
\end{remark}

\bibliography{AlgebraicDiffops}
\bibliographystyle{plain}
\noindent
\emph{E-Mail-adress:}verb!stef.guenther2@vodafone.de!
\end{document}